\date{\scriptsize   Received: , Accepted: .}
\newtheorem{theorem}{Theorem}[section]
\newtheorem{proposition}[theorem]{Proposition}
\newtheorem{lemma}[theorem]{Lemma}
\newtheorem{corollary}[theorem]{Corollary}
\theoremstyle{definition}
\theoremstyle{remark}
\numberwithin{equation}{section}
\newtheorem{conjecture}[theorem]{Conjecture}
\newcommand{\R}{\mathbb R}
\newcommand{\Z}{\mathbb Z}
\newcommand{\Q}{\mathbb Q}
\newcommand{\ord}{\operatorname{ord}}
\newcommand{\leg}[2]{\left( \frac{#1}{#2} \right)}  
\newcommand{\vE}{\mathcal E} 
\newcommand{\sym}{\operatorname{Sym}}
\newcommand{\dist}{\operatorname{dist}}
\newcommand{\area}{\operatorname{area}}
\newcommand{\ripleyK}{K}
\renewcommand{\^}{\widehat}
\newcommand{\bx}{\mathbf x}
\newcommand{\by}{\mathbf y}
\newcommand{\bm}{\mathbf m}
 \newcommand{\Ccap}{\operatorname{Cap}}
 \newcommand{\sumflat}{\sideset{}{^{\flat}} \sum}
\newcommand{\Zps}{Z}
\begin{document}

 
\title[Spatial statistics for lattice points on the sphere   I]{Spatial statistics for lattice points on the sphere I: Individual results}
 
\author[J.Bourgain]{Jean Bourgain}

\author[Z. Rudnick]{Ze\'ev Rudnick}

\author[P. Sarnak]{Peter Sarnak}



%
 \dedicatory{Dedicated to  Freydoon Shahidi on the occasion of his 70-th birthday}

 \maketitle
%

\begin{abstract}
We study the spatial distribution of point sets on the sphere   obtained from the representation  of a large integer as a sum of three integer squares. We examine several statistics of these point sets, such as the electrostatic potential, Ripley's function, the variance of the number of points in random spherical caps, and the covering radius. Some of the results are conditional on the Generalized Riemann Hypothesis.  \\
\textbf{Keywords:}  Sums of three squares, spatial statistics, Ripley's functions.  \\
\textbf{MSC(2010):}  Primary: 11K36; Secondary: 11E12, 11E25.
\end{abstract}

\section{Introduction}

The goal of this paper is to study the spatial distribution of point sets on the sphere $S^2$ obtained from the representation  of a large integer as a sum of three squares. 
Some of the results were announced in \cite{BRS Saff}. 

Let $ \vE(n)$ be the set of integer solutions of the equation $x_1^2+x_2^2+x_3^2=n$:
\begin{equation*}
 \vE(n) = \{\bx\in \Z^3: |\bx|^2=n\} \;.
\end{equation*}
This set might be empty; a necessary and sufficient condition for
$\vE(n)\neq \emptyset$ that is for $n$ to be a sum of three squares, is that $n\neq 4^a(8b-1)$.
We denote by 
  \begin{equation*}
  N= N_n:=\#\vE(n) \;.
  \end{equation*}
It is known that $N_n\ll n^{1/2+o(1)}$ and if there are {\em primitive} lattice points,
that is $\bx=(x_1,x_2,x_3)$ with $\gcd(x_1,x_2,x_3)=1$ (which happens if an only if $n\neq 0,4,7\bmod 8$)
then there is a lower bound of $ N_n\gg n^{1/2-o(1)}$.
For more details concerning $N_n$ see \S~\ref{sec:arithmetic}.

Once there are many points in $\vE(n)$, one can ask how they distribute on the sphere.  Linnik conjectured, and proved
assuming the Generalized Riemann Hypothesis (GRH), that for $n\neq 0,4,7 \bmod 8$, the projected lattice points  
\begin{equation*}
 \^\vE(n):= \frac 1{\sqrt{n}}\vE(n) \subset S^2
\end{equation*}
become uniformly distributed on the unit sphere $S^2$   as $n\to \infty$ along this sequence.  
That is, for a nice subset $\Omega\subset S^2$ let 
\begin{equation*}
\Zps(n;\Omega):=\#(\^\vE(n) \cap \Omega) .
\end{equation*}
Then as $n\to \infty$ along this sequence 
\begin{equation}\label{unif dist}
\frac 1{N_n}Z(n,\Omega)  \sim \sigma(\Omega)
\end{equation}
where $\sigma$ is the normalized area measure on $S^2$ ($\sigma(S^2)=1$).  
This was proved unconditionally by Duke \cite{Duke, Duke-SP} and Golubeva
and Fomenko \cite{GF}. 

We will consider various statistics of the point sets $\^\vE(n)\subset S^2$, with the aim of comparing these statistics to those of random points, that is $N$ points chosen independently and uniformly, and contrast them with those of ``rigid" point sets, by which we mean points on a planar lattice, such as the honeycomb lattice. 
See \cite{BRS Saff} for a detailed discussion and proofs of the statements below concerning random points.

\subsection{Electrostatic energy}
The electrostatic energy of $N$ points $P_1,\dots,P_N$ on $S^2$ is given by
\begin{equation*}
   E(P_1,\dots, P_N):=  \sum_{i\neq j} \frac 1{|P_i-P_j| } \;.
  \end{equation*}
This energy $E$ depends on both the global distribution of the points
as well as a moderate penalty for putting the points to close to each other.
The configurations with minimal energy are rigid in various senses \cite{Dahlberg} and we will
see below in Corollary~\ref{cor:absolute continuity}
that our points $\^\vE(n)$ are far from being
rigid.



More generally, the Riesz $s$-energy is defined as  
\begin{equation*}
   E_s(P_1,\dots, P_N):=  \sum_{i\neq j} \frac 1{|P_i-P_j|^s} \;.
  \end{equation*}

The minimum energy configuration is known to satisfy \cite{Wagner1, Wagner2}
\begin{equation*}
 I(s)N^2-\beta N^{1-\frac s2}\leq \min_{P_1,\dots,P_N} E_s(P_1,\dots,P_N) \leq I(s)N^2-\alpha N^{1-\frac s 2}    
\end{equation*}
when $0<s<2$, for some $0<\alpha \leq \beta<\infty$ (depending on $s$), where 
\begin{equation*}
 I(s)=\iint_{S^2\times S^2} \frac 1{||x - y||^s} d\sigma(x)d\sigma(y) = \frac {2^{1-s}}{2-s} \;.
\end{equation*}

  We will  show that for  $0<s<2$, $\^\vE(n)$ give points with
asymptotically optimal  $s$-energy:
  \begin{theorem}\label{thm:extremal energy}
  Fix $0<s<2$. Suppose $n\to \infty$ such that $n\neq 0,4,7 \mod 8$. Then there is some $\delta>0$ so that
  \begin{equation*}
   E_s(\^\vE(n)) = I(s) N^2 +O(N^{2-\delta}) \;.
  \end{equation*}
  \end{theorem}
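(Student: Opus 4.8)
The plan is to split the Riesz energy at a small distance scale: the far part will produce the main term $I(s)N^2$ via the effective equidistribution of $\^\vE(n)$, while the near part, which carries the singularity of the kernel, must be controlled by an arithmetic count of pairs of lattice points at a fixed distance. Write $P_1,\dots,P_N$ for the points of $\^\vE(n)$; since $|\bx-\by|^2$ is a positive integer for distinct $\bx,\by\in\vE(n)$, the $P_i$ have minimal separation $\ge n^{-1/2}$. Fix a small $\nu>0$ to be chosen, put $R=n^{-\nu}$, pick a smooth $\chi\colon[0,\infty)\to[0,1]$ with $\chi\equiv 1$ on $[0,1]$ and $\chi\equiv 0$ on $[2,\infty)$, and set
\[
E_s(\^\vE(n))=\underbrace{\sum_{i\ne j}\bigl(1-\chi(|P_i-P_j|/R)\bigr)|P_i-P_j|^{-s}}_{S_{\mathrm{far}}}+\underbrace{\sum_{i\ne j}\chi(|P_i-P_j|/R)\,|P_i-P_j|^{-s}}_{S_{\mathrm{near}}}.
\]

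For $S_{\mathrm{far}}$ I would observe that its summand is $G_R(\langle P_i,P_j\rangle)$, where $G_R(t)=(1-\chi(\sqrt{2-2t}/R))(2-2t)^{-s/2}$ is smooth on $[-1,1]$, vanishes identically near $t=1$, has $\|G_R\|_{C^k}\ll_k R^{-s-2k}$, and satisfies $\tfrac12\int_{-1}^1 G_R=I(s)+O(R^{2-s})$ (here $s<2$ makes the discarded piece near $t=1$ only $O(R^{2-s})$). Since $G_R(1)=0$, for each fixed $i$ one has $\sum_j G_R(\langle P_i,P_j\rangle)=\sum_{\by\in\^\vE(n)}G_R(\langle P_i,\by\rangle)$, and I would apply Duke's theorem in its effective (power-saving) form — quantitative equidistribution of $\^\vE(n)$ with an error controlled by a fixed Sobolev norm of the test function, which here is $\ll R^{-C}$ — to get $\sum_{\by\in\^\vE(n)}G_R(\langle P_i,\by\rangle)=N\cdot\tfrac12\int_{-1}^1 G_R+O(n^{1/2-\delta_0}R^{-C})$ for absolute constants $\delta_0>0$, $C$. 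Summing over the $N$ points $P_i$,
\[
S_{\mathrm{far}}=I(s)N^2+O\!\bigl(N^2R^{2-s}\bigr)+O\!\bigl(Nn^{1/2-\delta_0}R^{-C}\bigr),
\]
and since $N=n^{1/2+o(1)}$ (using $n\neq 0,4,7\bmod 8$) both error terms are $O(N^{2-\delta})$ once $\nu$ is small enough that $\nu(2-s)$ and $\delta_0-C\nu$ stay bounded below. This step is unconditional, as Duke's bound is.

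For $S_{\mathrm{near}}$: the sum runs over $(\bx,\by)\in\vE(n)^2$ with $\bx\ne\by$ and $|\bx-\by|^2<4R^2n$, so writing $|\bx-\by|^2=2j$ (necessarily even) one gets $|S_{\mathrm{near}}|\le\sum_{1\le j<2R^2n}(2j/n)^{-s/2}r_n(2j)$, with $r_n(2j):=\#\{(\bx,\by)\in\vE(n)^2:|\bx-\by|^2=2j\}$. The crucial input is the uniform estimate
\[
r_n(2j)\ll_\varepsilon n^\varepsilon\qquad(j\ge 1).
\]
I would prove it by writing $\by=\bx+e$ with $|e|^2=2j$ and $\langle\bx,e\rangle=-j$: for a fixed $e$ the admissible $\bx$ are the lattice points on a plane section of the sphere $|\bx|^2=n$, and a standard reduction identifies their number with the number of representations of an integer of size $\asymp n$ by the (shifted) binary quadratic form attached to the rank-two lattice $e^\perp\cap\Z^3$, hence $\ll n^{o(1)}$. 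The point — making the bound uniform, rather than just $\ll j^{1/2}n^{o(1)}$ — is the sum over the $\asymp j^{1/2}$ vectors $e$ with $|e|^2=2j$: here I would invoke Gauss's correspondence between the $O(3,\Z)$-orbits of such $e$ and the classes of binary forms of discriminant $\asymp-j$, together with $\sum_{[Q]}r_Q(m)\ll m^{o(1)}$ for the total number of representations of $m$ over all classes of a given discriminant, which collapses the $j^{1/2}$-many terms into a single $n^{o(1)}$. Granting this and using $s<2$,
\[
|S_{\mathrm{near}}|\ll n^{o(1)}n^{s/2}\!\!\sum_{1\le j<2R^2n}\!\!j^{-s/2}\ll n^{o(1)}n^{s/2}(R^2n)^{1-s/2}=n^{o(1)}n^{1-\nu(2-s)},
\]
which is $O(N^{2-\delta})$ for $\nu$ small (again using $N\gg n^{1/2-o(1)}$). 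Combining the two parts gives $E_s(\^\vE(n))=I(s)N^2+O(N^{2-\delta})$.

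I expect the uniform bound $r_n(2j)\ll_\varepsilon n^\varepsilon$ to be the main obstacle. The naive estimate $r_n(2j)\ll j^{1/2}n^{o(1)}$ — bounding the number of difference vectors $e$ by $\ll j^{1/2+o(1)}$ and each plane-section count by $\ll n^{o(1)}$ — would only tolerate $\nu>\tfrac16$ in the near part, and that is incompatible with the smallness of $\nu$ forced by the Sobolev loss $R^{-C}$ in $S_{\mathrm{far}}$ (since $C$ is a fixed but unspecified and possibly large number while $\delta_0$ is small). So one genuinely needs the cancellation among the difference vectors $e$, via the genus theory of ternary forms; everything else (the choice of $\nu$, the bookkeeping with $N=n^{1/2+o(1)}$, the constant $\delta$, which will depend on $s$ and $\delta_0$) is routine.
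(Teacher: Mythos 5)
Your overall architecture --- split at scale $R=n^{-\nu}$, handle the far range by effective equidistribution and the near range by counting pairs of lattice points at a fixed distance --- is exactly the paper's (the paper runs the far range through spherical-cap discrepancy and Koksma's inequality rather than smooth test functions and Sobolev norms, but that difference is cosmetic). The gap is in the near range. The ``crucial input'' you isolate, the uniform bound $r_n(2j)=A(n,n-j)\ll_\varepsilon n^{\varepsilon}$, is false in the generality the theorem requires. It does hold for squarefree $n$ (this is Lemma~\ref{lem:a(n,t) bd by f}), but Theorem~\ref{thm:extremal energy} covers all $n\neq 0,4,7\bmod 8$, i.e.\ $n=b^2m$ with $b$ odd and $m$ squarefree, and for such $n$ the Pall--Venkov local densities for $A(n,t)$ contribute a factor of size the square root of the squarefull part of $\gcd(n,t)$: when $p^{k}\,\|\,\gcd(n,j)$ with $k\geq 2$, the local factor at $p$ is $\gg p^{k/2-1}$, and generically $\gg p^{k/2}$, whenever it is nonzero. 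Concretely, taking $n=p^{2}m$ with $m\asymp p$ squarefree and $j=p^{2}\asymp n^{2/3}$ (which lies in your near range once $\nu<1/6$), one gets $A(n,n-j)\gg p\asymp n^{1/3}$ for suitable $m$. This is precisely the error of Linnik that Pall corrected: the true bound is $A(n,t)\ll\gcd(n,t)^{1/2}n^{\varepsilon}$ (Proposition~\ref{bd for A(n,t)}), and no regrouping of the difference vectors $e$ by genus classes can remove the $\gcd$ factor, because $A(n,n-j)$ is itself given by a product of local densities that genuinely blow up on the squarefull part of $\gcd(n,j)$.

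The gap is, however, local to that lemma and your conclusion survives: you never need a pointwise bound, only the sum $\sum_{1\le j<2R^{2}n}j^{-s/2}r_n(2j)$, and the $\gcd$ factor averages out over $j$. Indeed, bounding $\gcd(n,j)^{1/2}\le\gcd(n,j)\le\sum_{d\mid n,\,d\mid j}d$ and swapping the order of summation gives $\sum_{j\le J}\gcd(n,j)^{1/2}j^{-s/2}\ll\tau(n)\,J^{1-s/2}\ll n^{\varepsilon}J^{1-s/2}$ for $0<s<2$, whence $|S_{\mathrm{near}}|\ll n^{s/2+\varepsilon}(R^{2}n)^{1-s/2}=n^{1-\nu(2-s)+\varepsilon}$, exactly the estimate you wanted; the remainder of your argument (the choice of $\nu$, the far-range main term, the bookkeeping with $N=n^{1/2+o(1)}$) then goes through unchanged. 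So the repair is: replace your uniform lemma by Proposition~\ref{bd for A(n,t)} together with this averaging over $j$, after which your proof is correct and coincides in substance with the paper's.
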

For a recent application of this result, see \cite{RWY}.

\subsection{Point pair statistics: Ripley's function}
The point pair statistic and its variants is at the heart of our investigation.
It is a robust statistic as for as testing the randomness hypothesis
and it is called Ripley's function in the statistics literature \cite{SKM}.
For $P_1,\dots, P_N\in S^2$ and $0<r<2$, set
\begin{equation*}\label{def of ripley}
 \ripleyK_r(P_1,\dots,P_N):=\sum_{\substack{i\neq j\\ |P_i-P_j|<r}} 1
\end{equation*}
to be the number of ordered pairs of distinct points at (Euclidean) distance at most $r$ apart.
For fixed $\epsilon>0$, uniformly for $N^{-1+\epsilon}\leq r\leq 2$,
one has that for $N$ random points (the binomial process)
\begin{equation*}
 \ripleyK_r(P_1,\dots,P_N) \sim \frac 14 N(N-1) r^2 \;.
\end{equation*}

Based on the results below as well as some numerical experimentation,
we conjecture that for $n$ square-free the points $\^\vE(n)$ behave
randomly w.r.t. Ripley's statistic at scales
$N_n^{-1+\epsilon}\leq r\leq 2$; that is
\begin{conjecture} \label{conj K}
For squarefree $n\neq 7\bmod 8$, 
\begin{equation*}
  \ripleyK_r(\^\vE(n)) \sim \frac{N^2r^2}4, \quad \mbox{as } n\to \infty \;.
\end{equation*}
\end{conjecture}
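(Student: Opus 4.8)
The plan is to reduce Ripley's count to a pair‑counting problem for inner products and then handle two scale regimes separately. Writing $\^\bx=\bx/\sqrt n$, for $\bx,\by\in\vE(n)$ one has $|\^\bx-\^\by|^2=2-2(\bx\cdot\by)/n$, so $|\^\bx-\^\by|<r$ exactly when $\bx\cdot\by>n(1-r^2/2)$. Hence, with
\begin{equation*}
R_n(k):=\#\{(\bx,\by)\in\vE(n)\times\vE(n):\ \bx\cdot\by=k\}
\end{equation*}
and noting that $R_n(n)=N$ counts precisely the diagonal $\bx=\by$, the assertion becomes
\begin{equation*}
\ripleyK_r(\^\vE(n))=\sum_{n(1-r^2/2)<k<n}R_n(k)\sim\frac{N^2r^2}{4}.
\end{equation*}
Since $N=N_n=n^{1/2+o(1)}$ and the window has length $\asymp nr^2/2$, this says that $R_n(k)$ should average to $N^2/(2n)=n^{o(1)}$ over the window just below $k=n$, with enough uniformity to let $n-k$ run down to $n^{\epsilon}$. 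I would split into a \emph{macroscopic} regime $r\ge n^{-\delta_0}$ and a \emph{microscopic} regime $N^{-1+\epsilon}\le r< n^{-\delta_0}$, and in both it is cleanest to work first with a smoothed count (replacing $\mathbf 1[|\^\bx-\^\by|<r]$ by a bump in $\bx\cdot\by/n$), recovering the sharp statement at the end by sandwiching between smooth counts at radii $r(1\pm n^{-A})$.

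In the macroscopic regime the estimate should follow from Duke's equidistribution theorem in its effective form. Expanding a smoothed cap indicator in Legendre polynomials $P_\ell(\bx\cdot\by/n)$ truncated at degree $\ll n^{\epsilon}/r$, one is led to sums $\sum_{\bx,\by\in\vE(n)}Y_\ell(\^\bx)Y_m(\^\by)$; the effective bound $\sum_{\bx\in\vE(n)}Y_\ell(\^\bx)\ll_\ell N^{1-\eta}$ (the input underlying Duke's theorem), applied in each variable, shows that the product measure $\mu_n\times\mu_n$, $\mu_n=\tfrac1N\sum_{\bx\in\vE(n)}\delta_{\^\bx}$, equidistributes to $\sigma\times\sigma$ on $S^2\times S^2$ with a power‑saving discrepancy. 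Since the cap of Euclidean radius $r$ has $\sigma$‑mass $r^2/4$, this yields $\ripleyK_r(\^\vE(n))=\tfrac14N^2r^2\bigl(1+O(n^{-\eta'})\bigr)$ once $r$ is bounded below by a small fixed power of $n$. This half is essentially routine given what is quoted above.

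The microscopic regime is the real difficulty, and the reason the statement is only a conjecture. Here one must count, for $n-k$ as small as $n^{\epsilon}$, the pairs $(\bx,\by)\in\vE(n)^2$ with $\bx\cdot\by=k$. Since $n$ is squarefree every $\bx\in\vE(n)$ is primitive, so the orthogonal lattice $L_\bx=\bx^\perp\cap\Z^3$ is a binary quadratic lattice of determinant $n$, and $\by$ with $\bx\cdot\by=k$ corresponds to a vector of norm $(n^2-k^2)/n$ in a prescribed coset of (a rescaling of) $L_\bx$; equivalently $R_n(k)$ is the number of integer $3\times 2$ matrices $A$ with $A^{\mathrm T}A=\left(\begin{smallmatrix}n&k\\k&n\end{smallmatrix}\right)$, i.e. a representation number of a binary form of determinant $n^2-k^2$ by $x_1^2+x_2^2+x_3^2$. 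One then invokes the Siegel--Weil / circle‑method expansion for ternary theta series: the expected value of $R_n(k)$ is $\tfrac{N^2}{2n}\,\mathfrak S(n,k)$ with a singular series $\mathfrak S(n,k)$ whose $2$‑adic factor encodes the $7\bmod 8$ obstruction, and one must show $\mathfrak S(n,k)$ averages to $1$ over the window (so that its contribution is $\tfrac{nr^2}{2}\cdot\tfrac{N^2}{2n}=\tfrac{N^2r^2}{4}$), plus a cusp‑form error one hopes to bound by a shifted‑convolution / Rankin--Selberg estimate. The obstruction is to control that error \emph{uniformly} for $n-k$ as small as $n^{\epsilon}$ --- equivalently, to show there are not too many pairs of lattice points at angular separation $\lesssim n^{-1/2+\epsilon}$. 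This lies past current technology: it would require essentially optimal bounds for the Fourier coefficients of the relevant half‑integral weight forms (a strong Ramanujan‑type input), or an essentially sharp second‑moment estimate for $R_n(k)$ in short windows; subconvexity, and even GRH for the associated $L$‑functions, seem to fall short at the finest scales. A realistic intermediate goal is therefore to prove, conditionally on GRH, the upper bound $\ripleyK_r(\^\vE(n))\ll N^2r^2$ for $r$ down to some $n^{-c}$ with $c$ not too small --- which already rules out anomalous clustering at polynomially small scales --- and to establish the asymptotic unconditionally for every fixed $r\in(0,2)$.
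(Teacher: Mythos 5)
You have correctly recognized that this statement is a conjecture: the paper offers no proof, and your diagnosis of where the obstruction lies --- uniform control of the pair counts $R_n(k)=A(n,k)$ for $n-k$ as small as $n^{\epsilon}$, i.e.\ ruling out clustering at angular scales near $N^{-1/2}$ --- matches the paper's framing exactly. Your macroscopic-regime argument is sound and is essentially the mechanism the paper deploys elsewhere: the effective Weyl-sum bound of Iwaniec/Duke/Golubeva--Fomenko yields a spherical-cap discrepancy $D(\^\vE(n))\ll n^{-\eta}$, whence $\ripleyK_r(\^\vE(n))=\tfrac14N^2r^2\bigl(1+O(n^{-\eta}r^{-2})\bigr)$, an asymptotic once $r\gg n^{-\eta/2+\epsilon}$; the paper uses precisely this discrepancy estimate in \S\ref{sec:electrostatic} for the Riesz energy rather than stating it for $\ripleyK_r$.

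Where your proposal diverges from the paper is in the partial result toward the microscopic regime. Theorem~\ref{thm:poisson} actually establishes, under GRH, the upper bound $\ripleyK_r(\^\vE(n))\ll_\epsilon N^2r^2$ uniformly over the \emph{entire} range $N^{-1+\epsilon}\le r\le 2$, not merely for $r$ down to some $n^{-c}$ as you propose as a realistic goal. And the route is not the Siegel--Weil main term plus cusp-form error that you sketch (which, as you correctly observe, founders on shifted-convolution estimates at the finest scales). Instead the paper bounds each $A(n,t)$ individually via the exact local-density formulas of Venkov and Pall, dominates it by a nonnegative multiplicative function $f_n$ evaluated at $n^2-t^2$, with $f_n(p)=1+\chi_{-n}(p)$, and then applies Nair's theorem on mean values of multiplicative functions of polynomial arguments in short intervals to obtain
\begin{equation*}
\sum_{n-\frac b2<t<n-\frac a2} f_n(n^2-t^2)\ \ll\ (b-a)\exp\Bigl(2\sum_{p<n}\frac{\chi_{-n}(p)}{p}\Bigr)
\end{equation*}
for $b-a>n^{\alpha}$. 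GRH (really only the absence of Siegel zeros) enters solely at the last step, to identify $\exp\bigl(2\sum_{p<n}\chi_{-n}(p)/p\bigr)$ with $L(1,\chi_{-n})^2\asymp (N_n/\sqrt n)^2$ via the Gauss--Dirichlet formula. This sidesteps the cuspidal error term entirely, at the cost of losing the asymptotic constant --- which is exactly the trade-off that leaves the full conjecture open.
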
 
 We show that Conjecture~\ref{conj K} is true at least in
terms of an upper bound which is off only by a multiplicative
constant.
\begin{theorem}\label{thm:poisson}
 Assume the Generalized Riemann Hypothesis (GRH). Then for fixed
 $\epsilon>0$ and $N^{-1+\epsilon} \leq r\leq 2$,
\begin{equation*}
 \ripleyK_r(\^\vE(n)) \ll_\epsilon N^2 r^2
\end{equation*} 
for square-free $n\neq 7\bmod 8$, where the implied constant depends only on $\epsilon$.
\end{theorem}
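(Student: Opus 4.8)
The plan is to turn Ripley's function into an arithmetic counting problem and then estimate it by two different methods according to the size of $r$. Since $|x-y|^2=2n-2\,x\cdot y$ is always an even integer, projecting to $S^2$ gives
\[
\ripleyK_r(\widehat\vE(n))=\sum_{0<\ell<r^2n}\rho_n(\ell),\qquad \rho_n(\ell):=\#\{(x,y)\in\vE(n)^2:\ |x-y|^2=\ell\},
\]
and $\rho_n(\ell)$ is exactly the number of representations of the binary quadratic form with Gram matrix $\left(\begin{smallmatrix}n & n-\ell/2\\ n-\ell/2 & n\end{smallmatrix}\right)$, of determinant $\tfrac14\ell(4n-\ell)$, by $x_1^2+x_2^2+x_3^2$ (so $\rho_n(\ell)=0$ unless $\ell$ is even). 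It is also useful to record the equivalent description via the involution $(x,y)\mapsto(x+y,x-y)$: $\rho_n(\ell)$ counts pairs of orthogonal integer vectors $a\perp b$ with $a\equiv b\bmod 2$, $|b|^2=\ell$ and $|a|^2=4n-\ell$. The target $\ripleyK_r\ll N^2r^2$ is consistent with, and slightly weaker than, the expected main term $\tfrac14N^2r^2$, i.e.\ $\sum_{\ell<r^2n}\rho_n(\ell)\approx\tfrac14N^2r^2$.

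For $r$ bounded below by a fixed power of $n$, say $r\ge n^{-1/4+\epsilon}$, I would argue by harmonic analysis on $S^2$. Write $\ripleyK_r$ as $\sum_{i\neq j}f(\widehat P_i\cdot\widehat P_j)$ with $f=\mathbf 1_{(1-r^2/2,\,1]}$, pass to a smooth nonnegative majorant $f^{+}\ge f$ smoothed at angular scale $\asymp r$ (so $\widehat{f^{+}}(0)\ll r^2$ while the Legendre coefficients $\widehat{f^{+}}(\ell)$ decay super-polynomially beyond level $\asymp 1/r$), and apply the addition theorem to get
\[
\ripleyK_r(\widehat\vE(n))\le \widehat{f^{+}}(0)\,N^2+\sum_{\ell\ge1}\widehat{f^{+}}(\ell)\,\frac{4\pi}{2\ell+1}\sum_{m}\bigl|W_{\ell,m}(n)\bigr|^2,
\]
where $W_{\ell,m}(n)=n^{-\ell/2}\sum_{x\in\vE(n)}P_{\ell,m}(x)$ is the normalized $n$-th Fourier coefficient of the weight-$(\ell+\tfrac32)$ cusp form on $\Gamma_0(4)$ obtained by weighting the ternary theta series with an orthonormal basis $\{P_{\ell,m}\}_m$ of harmonic polynomials of degree $\ell$. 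The $\ell=0$ term is the main term $\widehat{f^{+}}(0)N^2\ll N^2r^2$. After decomposing into Hecke eigenforms, the Waldspurger--Kohnen--Zagier correspondence expresses $|W_{\ell,m}(n)|^2$, up to factors polynomial in $\ell$, as a central value $L(\tfrac12,\cdot)$ of the Shimura-lifted $L$-function, and GRH (Lindel\"of on the critical line) gives $\sum_m|W_{\ell,m}(n)|^2\ll_\epsilon \ell^{1+\epsilon}n^{1/2+\epsilon}$; summing against $\widehat{f^{+}}(\ell)$ (for which $\sum_{\ell\ge1}|\widehat{f^{+}}(\ell)|\ell^{\epsilon}\ll r^{-\epsilon}\ll n^{\epsilon/2}$) yields $\ripleyK_r(\widehat\vE(n))\ll_\epsilon N^2r^2+n^{1/2+O(\epsilon)}\ll_\epsilon N^2r^2$ in this range.

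For the complementary range $N^{-1+\epsilon}\le r<n^{-1/4+\epsilon}$ the error $n^{1/2}$ above exceeds the target, so I would instead estimate $\sum_{0<\ell<r^2n}\rho_n(\ell)$ directly through the mass formula. Because $x_1^2+x_2^2+x_3^2$ is alone in its genus, Siegel's formula (with the usual correction in the boundary case in which the number of variables is one more than the rank of the represented form) identifies $\rho_n(\ell)$ with the product of local representation densities $\prod_v\beta_v$ of $\left(\begin{smallmatrix}n & n-\ell/2\\ n-\ell/2 & n\end{smallmatrix}\right)$, possibly up to a cuspidal remainder from the genus-$2$, weight-$\tfrac32$ theta series which is in any case controlled by GRH for the associated $L$-functions; here the archimedean factor is bounded, since the relevant exponent $\tfrac{3-2-1}{2}$ vanishes. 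Summing the singular series over $0<\ell<r^2n$ by a sieve/Perron argument evaluates the main term as $\tfrac{N^2}{n}\cdot r^2n\cdot(1+o(1))$ — one checks prime by prime that the $\ell$-averaged local densities reproduce, up to bounded factors, the square of the local densities appearing in $r_3(n)=N$, divided by $n$, and that the sieve error $\ll r\sqrt n\cdot n^{o(1)}$ is smaller than the main term once $r\ge N^{-1+\epsilon}$ — and hence $\ripleyK_r(\widehat\vE(n))\ll N^2r^2$.

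I expect two main obstacles. In the harmonic-analysis range, one must bound $\sum_m|W_{\ell,m}(n)|^2$ cleanly and uniformly in $\ell$ — handling the Waldspurger normalization constants, the Eisenstein-versus-cuspidal decomposition of the theta series, and the conductor dependence — and arrange the smoothing so that the $\ell=0$ term is genuinely $\ll r^2 N^2$ (and, if one wants the full conjecture, $\sim\tfrac14r^2N^2$). The more delicate point, and where the two methods meet, is the transition scale $r\asymp n^{-1/4}$: here no loss beyond a bounded factor is permissible, so the small-$r$ argument must genuinely use the mass formula to pin down the main term as $\asymp N^2/n$ rather than any crude lattice-point estimate — bounding $\rho_n(\ell)$ individually by $n^{o(1)}$ and summing would lose a factor $\sqrt{r^2n}$, which at $r\asymp n^{-1/4}$ is $n^{1/4}$ — and one must also track the $2$-adic densities and the parity constraints uniformly throughout the sum over $\ell$.
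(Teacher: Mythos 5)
Your reduction of $\ripleyK_r(\widehat\vE(n))$ to the arithmetic count $\sum_{0<\ell<r^2n}\rho_n(\ell)$, with $\rho_n(\ell)=A(n,n-\ell/2)$ expressed through the Venkov--Pall product of local densities, is exactly the paper's starting point, and your harmonic-analysis branch for $r\ge n^{-1/4+\epsilon}$ (smooth majorant, Waldspurger/Kohnen--Zagier, Lindel\"of for the Jacquet--Langlands lifts) is sound --- it is in essence the paper's proof of Theorem~\ref{annuli prop}. The gap is in the complementary range, which is the heart of the theorem. There you assert that a ``sieve/Perron argument'' evaluates $\sum_{0<\ell<r^2n}\rho_n(\ell)$ asymptotically as $\frac{N^2}{n}\cdot r^2n\,(1+o(1))$. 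That assertion is Conjecture~\ref{conj K}, which the paper states as open (and proves only for almost all $n$ in the sequel); no known sieve or contour-integration technique produces an \emph{asymptotic} for the sum of the multiplicative quantity $\prod_p\alpha_p(n,t)\le 24 f_n(n^2-t^2)$ over the short interval $n-\tfrac{r^2n}{2}<t<n$, whose length can be as small as $n^{\epsilon}$ when $r$ is near $N^{-1+\epsilon}$. You supply no mechanism controlling the error in such an evaluation, and the difficulty is genuine: $f_n(n^2-t^2)$ is governed by the factorization of $(n-t)(n+t)$, its short-interval average naturally produces the factor $\exp\bigl(2\sum_{p<n}\chi_{-n}(p)/p\bigr)$, and identifying this with $L(1,\chi_{-n})^2\asymp N^2/n$ is itself where GRH must enter. (Also, your side remark that the trivial individual bound $\rho_n(\ell)\ll n^{o(1)}$ loses a factor $\sqrt{r^2n}$ is inaccurate: summing it loses only $n^{o(1)}/L(1,\chi_{-n})^2$, which is still fatal for a constant depending only on $\epsilon$, but for a different reason.)

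What the paper does instead is prove only an upper bound, which is all the theorem needs, via Nair's theorem on mean values of non-negative multiplicative functions at polynomial arguments over intervals $[x-y,x]$ with $y>x^{\alpha}$: applied to $f_n$ and $P(t)=n^2-t^2$ this gives $M(n;0,r^2n)\ll r^2n\exp\bigl(2\sum_{p<n}\tfrac{\chi_{-n}(p)}{p}\bigr)$ uniformly down to $r\ge N^{-1+\epsilon}$, after which GRH is used only through the absence of Siegel zeros to get $\exp\bigl(\sum_{p<n}\tfrac{\chi_{-n}(p)}{p}\bigr)\ll L(1,\chi_{-n})$ and then, by the Gauss--Dirichlet formula $N_n\asymp\sqrt{n}\,L(1,\chi_{-n})$, the bound $\ll N^2r^2$. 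Two consequences of this route are worth noting against your plan: the two-range split is unnecessary, since the Nair argument covers all of $N^{-1+\epsilon}\le r\le 2$ at once; and the hypothesis actually consumed (no Siegel zeros) is far weaker than the Lindel\"of-in-the-weight-aspect input of your large-$r$ branch. To repair your proposal, replace the small-$r$ asymptotic claim by an appeal to Nair's (or Shiu-type) upper-bound theorem, and verify, as the paper does, that the unspecified dependence of Nair's constants on the discriminant $-4n^2$ of $P$ can be made uniform in $n$.
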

Remark: We do not need the full force of GRH here, but rather that there are no ``Siegel zeros''.

\subsection{Nearest neighbour statistics}
Our study of Ripley's function allows us to investigate the distribution of nearest neighbour distances in $\vE(n)$: 
For $N$ points $P_1,\dots, P_N \in S^2$ let $d_j$ denote the distance from $P_j$
to the remaining points. Since the balls about the $P_j$'s of radius $d_j/2$ are disjoint,
it follows from considerations of area that $\sum_{j=1}^N d_j^2 \leq 4^2$.
Hence 
the mean value of the $d_j$'s is at most $4/\sqrt{N}$.
For rigid configurations as well as the ones that minimize the
electrostatic energy, each one of the $d_j$'s is of this size
\cite{Dahlberg}. This is not true for random points, however it is
still true for these that almost all the points are of order
$N^{-1/2}$ apart.

It is more  convenient to work with the squares of these distances.
In order to space these numbers at a scale for which they have a limiting distribution in the random case (see \cite{BRS Saff}),
we rescale them by their mean for the random case, i.e. replace
$d_j^2$ by $\frac N4 d_j^2$.
Thus for $P_1,\dots, P_N\in S^2$ define the nearest neighbour
spacing measure
$\mu(P_1,\dots,P_N)$ on $[0,\infty)$ by
\begin{equation*}
 \mu(P_1,\dots, P_N) := \frac 1N \sum_{j=1}^N \delta_{\frac N4 d_j^2}
\end{equation*}
where $\delta_\xi$ is a delta mass at $\xi\in \R$.
Note that the mean of $\mu$ is at most $1$ and that for random points we have
\begin{equation*}\label{int.17}
 \mu(P_1,\dots,P_N)\to e^{-x}dx,\quad \mbox{as } N\to \infty \;.
\end{equation*}
Based on this and on  numerical experiments
we conjecture:
\begin{conjecture}
 As $n\to \infty$ along square-free integers, $n\neq 7\bmod 8$,
\begin{equation*}
 \mu(\^\vE(n)) \to e^{-x}dx \;.
\end{equation*}
\end{conjecture}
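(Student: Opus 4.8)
The natural route is to convert the limiting shape of the spacing measure into a statement about \emph{all} of the local correlation functions of $\^\vE(n)$ at the critical scale $r\asymp N^{-1/2}$, via inclusion--exclusion. Fix $t>0$ and set $r=r_n=2\sqrt{t/N}$, so that $\tfrac N4 d_j^2<t$ exactly when the nearest--neighbour distance satisfies $d_j<r$; writing $V_j(r)$ for the number of points of $\^\vE(n)$ other than $P_j$ lying within Euclidean distance $r$ of $P_j$, we have $\mu(\^\vE(n))([0,t))=\frac1N\#\{j:V_j(r)\ge 1\}=1-\frac1N\#\{j:V_j(r)=0\}$. Expanding $\mathbf{1}[V_j(r)=0]=\prod_{i\neq j}\bigl(1-\mathbf{1}[|P_i-P_j|<r]\bigr)$ (product over the remaining points) and summing over $j$ gives the exact identity
\begin{equation*}
 \frac1N\#\{j:\ V_j(r)=0\}=\sum_{m\ge 0}\frac{(-1)^m}{m!}\,\frac{T_{m+1}(r)}{N},
\end{equation*}
where $T_{m+1}(r)$ counts ordered $(m+1)$-tuples of \emph{distinct} points $(P_j,P_{i_1},\dots,P_{i_m})$ of $\^\vE(n)$ with $|P_{i_\ell}-P_j|<r$ for every $\ell$ --- an ``$m$-legged star'' with centre $P_j$. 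Here $T_1(r)=N$ and $T_2(r)=\ripleyK_r(\^\vE(n))$, so the $m=1$ term is exactly Ripley's function.

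The core of the argument would be to prove that for each fixed $m\ge 1$, with $r=r_n=2\sqrt{t/N}$,
\begin{equation*}
 T_{m+1}(r_n)=N\,t^{\,m}\bigl(1+o(1)\bigr)\qquad\text{as }n\to\infty\text{ through squarefree }n\neq 7\bmod 8 .
\end{equation*}
This is the falling--factorial--moment normalisation of a $\mathrm{Pois}(t)$ count, since a cap of Euclidean radius $r_n$ about a point of $S^2$ has expected population $Nr_n^2/4=t$; equivalently, it asserts that the $(m+1)$-point correlations of $\^\vE(n)$ are Poissonian at the finest scale $\asymp N^{-1/2}\asymp n^{-1/4}$. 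For $m=1$ this is precisely Conjecture~\ref{conj K}. For $m\ge 2$ it becomes an equidistribution statement for $(m+1)$-tuples of lattice points on $\sqrt n\,S^2$ that all lie in one cap of angular radius $\asymp n^{-1/2}$ about one of them, averaged over that centre.

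To sum the alternating series one also needs a uniform tail bound of the shape $T_{m+1}(r_n)\le C(t)^m N$ for all $m$ and all large $n$; granting this, dominated convergence for series lets one pass to the limit term by term, and one obtains, for each fixed $t>0$,
\begin{equation*}
 \mu(\^\vE(n))\big([0,t)\big)\ \longrightarrow\ 1-\sum_{m\ge 0}\frac{(-1)^m t^{\,m}}{m!}=1-e^{-t}.
\end{equation*}
Since the limit is a continuous distribution function and the measures $\mu(\^\vE(n))$ have uniformly bounded mean (hence are tight, by Markov's inequality), this yields the weak convergence $\mu(\^\vE(n))\to e^{-x}\,dx$. The $m=1$ case of the needed tail bound is supplied, under GRH, by Theorem~\ref{thm:poisson} in the form $T_2(r_n)\ll Nt$; one would hope to propagate a bound of this strength to all $m$, for instance via a moment estimate on $V_j(r_n)$ exploiting that distinct lattice points of $\^\vE(n)$ are separated by $\gg n^{-1/2}$.

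\textbf{The main obstacle} is the asymptotic $T_{m+1}(r_n)\sim Nt^{\,m}$: Poissonian behaviour of the higher correlations of $\^\vE(n)$ at scale $N^{-1/2}$. This is out of reach of present methods even for $m=1$ --- the equidistribution theorems of Duke and of Golubeva--Fomenko control only the $m=0$ level (the count $N$ itself), while GRH-type information determines pairs only up to a multiplicative constant (Theorem~\ref{thm:poisson}). Ruling out the anomalous clustering of lattice points in tiny caps that would otherwise corrupt the exact constants, and extracting the cancellation in the alternating sum, appears to require genuinely new ideas; this is why the statement is recorded here only as a conjecture, supported by the $m=1$ upper bound and by numerical experiments.
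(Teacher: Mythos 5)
This statement is recorded in the paper as a conjecture: the authors give no proof, only the heuristic that $N$ independent uniform points have nearest--neighbour measure tending to $e^{-x}dx$ (citing \cite{BRS Saff}) together with numerical evidence. So there is no proof of the paper's to compare yours against, and you correctly do not claim to supply one. What you do provide is a sound and standard reduction: the inclusion--exclusion identity $\#\{j:V_j(r)=0\}=\sum_{m\ge0}\frac{(-1)^m}{m!}T_{m+1}(r)$ is exact (it is just $\sum_m(-1)^m\binom{V_j}{m}=\mathbf 1[V_j=0]$ summed over $j$), the normalisation $r_n=2\sqrt{t/N}$ correctly makes a cap of radius $r_n$ have expected occupancy $Nr_n^2/4=t$, and the asymptotic $T_{m+1}(r_n)\sim Nt^m$ is precisely the factorial--moment characterisation of $\mathrm{Pois}(t)$, so granting it for every $m$ plus a uniform tail bound does yield $\mu(\^\vE(n))([0,t))\to 1-e^{-t}$ and hence the conjecture. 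You also correctly locate the entire difficulty in that asymptotic: the $m=1$ case is Conjecture~\ref{conj K}, itself open, and the only available information is the GRH upper bound of Theorem~\ref{thm:poisson}, which is what the paper itself exploits to get the much weaker partial result Corollary~\ref{cor:absolute continuity} (any weak limit $\nu$ satisfies $\nu\le c_4\,dx$). In short: your write-up is an honest and correct framing of why the statement is open, not a proof, and it is consistent with everything the paper establishes; the one thing I would add is that your proposed ``uniform tail bound $T_{m+1}(r_n)\le C(t)^mN$'' is itself not known for any $m\ge2$ even under GRH, so both halves of your programme (asymptotics and tails) currently rest on unproven Poissonian correlation hypotheses.
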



Using Theorem~\ref{thm:poisson} and its proof we deduce the following basic result about the nearest neighbour measures $\mu(\^\vE(n))$:
\begin{corollary}\label{cor:absolute continuity}
 Assume GRH. If $\nu$ is a weak limit of the $\mu(\^\vE(n))$, $n\neq 7\bmod 8$ squarefree,  then $\nu$ is absolutely continuous, in fact there is an absolute constant $c_4>0$ such that
\begin{equation*}
 \nu\leq c_4dx \;.
\end{equation*}
\end{corollary}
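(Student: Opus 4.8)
The plan is to deduce the bound $\nu \leq c_4\,dx$ from the upper bound in Theorem~\ref{thm:poisson} by showing that the pushforward measures $\mu(\^\vE(n))$ assign uniformly small mass to small intervals. Concretely, I would first establish that for any fixed $a>0$ and any $0 < t \leq a$,
\begin{equation*}
\mu(\^\vE(n))\big([0,t]\big) \ll t + o(1) \;,
\end{equation*}
with an absolute implied constant, as $n\to\infty$ along squarefree $n\neq 7\bmod 8$; passing to a weak limit $\nu$ then gives $\nu([0,t]) \ll t$, and a standard Vitali-type covering argument (or simply the characterization of absolutely continuous measures with bounded density) upgrades this to $\nu \leq c_4\,dx$. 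So the whole content is the uniform local estimate on the spacing measure.

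To prove that estimate, observe that $\mu(\^\vE(n))([0,t])$ counts the proportion of points $P_j$ whose nearest-neighbour distance $d_j$ satisfies $\tfrac N4 d_j^2 \leq t$, i.e. $d_j \leq 2\sqrt{t/N} =: r$. Each such point has at least one other point of $\^\vE(n)$ within distance $r$, so the number of such points is at most the number of ordered pairs $(P_i,P_j)$ with $i\neq j$ and $|P_i - P_j| < r$, namely $\ripleyK_r(\^\vE(n))$. Hence
\begin{equation*}
\mu(\^\vE(n))\big([0,t]\big) \leq \frac{1}{N}\,\ripleyK_r(\^\vE(n)) \;,\qquad r = 2\sqrt{t/N}\;.
\end{equation*}
Now I need $r$ to lie in the admissible range $N^{-1+\epsilon}\leq r\leq 2$ of Theorem~\ref{thm:poisson}. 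Since $N = N_n \gg n^{1/2-o(1)}$ for squarefree $n$ (the primitive-points lower bound recalled in the introduction), $r = 2\sqrt{t/N}$ is of order $N^{-1/2}$ up to constants, which is comfortably $\geq N^{-1+\epsilon}$ for, say, $\epsilon = 1/4$ once $N$ is large; and $r\leq 2$ holds for $t\leq a$ and $N$ large. Applying Theorem~\ref{thm:poisson} with this $\epsilon$ gives $\ripleyK_r(\^\vE(n)) \ll N^2 r^2 = N^2 \cdot 4t/N = 4tN$, so $\mu(\^\vE(n))([0,t]) \ll t$ with an absolute implied constant, uniformly in $t\in(0,a]$ and for all large $n$. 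This is exactly the uniform local bound needed.

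The remaining step is routine measure theory: if $\nu$ is a weak-$*$ limit of a subsequence of the $\mu(\^\vE(n))$, then for every $t>0$ we get $\nu((0,t]) \leq \nu([0,t]) \leq \liminf \mu(\^\vE(n_k))([0,t'])$ for $t'>t$ slightly larger (using outer regularity / a continuity point argument to handle the boundary), hence $\nu([0,t]) \ll t$; more generally translating by the same pair-counting argument applied on annuli shows $\nu([s,s+t]) \ll t$ uniformly, so $\nu$ has a density bounded by an absolute constant $c_4$. The main obstacle — really the only nontrivial point — is verifying that the relevant radius $r = 2\sqrt{t/N}$ genuinely falls within the range $[N^{-1+\epsilon},2]$ where Theorem~\ref{thm:poisson} applies, which is where the arithmetic lower bound $N_n\gg n^{1/2-o(1)}$ for squarefree $n\not\equiv 7\bmod 8$ is essential; everything else is bookkeeping. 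One should also be slightly careful that the conclusion is about \emph{weak} limits, so one works along an arbitrary convergent subsequence, but the bound obtained is uniform and hence survives the limit.
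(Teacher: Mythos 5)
Your overall strategy is the paper's: bound the number of points whose nearest neighbour lies in a given distance range by the number of \emph{all} ordered pairs in that range, and then control the pair count under GRH. The detailed part of your argument, however, only treats intervals of the form $[0,t]$, and for those the statement of Theorem~\ref{thm:poisson} does suffice. But the conclusion $\nu\leq c_4\,dx$ requires the increment bound $\nu([\alpha,\beta])\ll \beta-\alpha$ for \emph{every} $0\leq\alpha<\beta$, and the cumulative estimate $\nu([0,t])\ll t$ alone does not imply absolute continuity: the point mass $\nu=\delta_1$ satisfies $\nu([0,t])\leq t$ for all $t$ and is certainly not $\leq c\,dx$. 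So the case $\alpha>0$ is not a refinement of your argument; it is the whole content.

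For that case your one-line remark that ``the same pair-counting argument applied on annuli'' works is exactly where the gap sits. The relevant quantity is the number of ordered pairs with $r_1\leq|P_i-P_j|\leq r_2$, i.e.\ $\ripleyK_{r_2}-\ripleyK_{r_1}$, and you cannot deduce $\ripleyK_{r_2}-\ripleyK_{r_1}\ll N^2(r_2^2-r_1^2)$ from the stated bound $\ripleyK_r\ll N^2r^2$: subtracting the trivial lower bound $\ripleyK_{r_1}\geq 0$ only gives $\ll N^2 r_2^2\asymp \beta N$, which is useless when $\beta-\alpha$ is small but $\beta$ is not. What is actually needed is the interior estimate from the \emph{proof} of Theorem~\ref{thm:poisson}, namely Proposition~\ref{prop:M(n,y)}: $M(n;a,b)\ll (b-a)\exp\bigl(2\sum_{p<n}\chi_{-n}(p)/p\bigr)$ for $a<b$ with $b-a>n^{\alpha}$, applied with $a=\alpha\cdot 4n/N_n$, $b=\beta\cdot 4n/N_n$, followed by the class number formula \eqref{N in terms of L(1,chi)} and the GRH input $\exp\bigl(2\sum_{p<n}\chi_{-n}(p)/p\bigr)/L(1,\chi_{-n})^2=O(1)$. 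This is why the paper says ``Theorem~\ref{thm:poisson} \emph{and its proof}.'' Once you invoke Proposition~\ref{prop:M(n,y)} explicitly (checking $b-a\gg(\beta-\alpha)n^{1/2-o(1)}$ meets its hypothesis), the rest of your write-up — the reduction from nearest-neighbour counts to pair counts, the range check on $r$, and the passage to the weak limit via continuity points — is fine and matches the paper.
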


Corollary~\ref{cor:absolute continuity} implies that the
$\^\vE(n)$'s are not rigid for large $n$ since for rigid
configurations, 
$\mu_{P_1,\dots,P_N} \to\delta_{\pi/\sqrt{12}}$.
Moreover, since Corollary~\ref{cor:absolute continuity}  implies that  
such a weak limit $\nu$ cannot charge $\{0\}$ positively, it follows that
almost all the points of $\^\vE(n)$ are essentially separated with
balls of radius approximately $N^{-1/2}$ from the rest. 


\subsection{The number variance in shrinking sets}

We  consider families of sets $\Omega_n$ which shrink as $n\to \infty$, say spherical caps $\Ccap(\xi,r_n)=\{x\in S^2:\dist(x,\xi)\leq r_n\}$ of radius $r_n$, or more generally annuli $A_{r_n,R_n}(\xi) = \{x\in S^2:r_n\leq \dist(x,\xi)\leq R_n\}$. 
Uniform distribution \eqref{unif dist}  remains true if the sets are  allowed to shrink  with $n$ provided $\area(\Omega_n )\gg n^{-\alpha}$ for some small $\alpha>0$,  
but one expects this to be true a  as long as the expected number $N_n\cdot \area(\Omega_n)\gg n^\epsilon$. 
This conjecture (stated by Linnik \cite[Chapter XI]{Linnikbook}) has some profound implications. For instance, applied to annuli centered at the north pole,  it implies another conjecture of Linnik, that 
every integer $n$ ($n\neq 0,4,7 \bmod 8$) can be written as a sum of two square and a mini-square: $n=x^2+y^2+z^2$, with $z=O(n^\epsilon)$ for all $\epsilon>0$. 
It also implies an old conjecture   about the gaps between sums of two squares, 
see \S~\ref{sec:Littlewood}.  

We can ask for a version for ``random" sets, meaning we fix a nice set $\Omega_n\subset S^2$ and investigate the statistics of the number of points $\Zps(n; g\Omega_n)$ where $g\in {\rm SO}(3)$ is a random rotation. Examples of such sets would be spherical caps $\Ccap(\xi,r_n )$, 
or annuli   $A_{r_n,R_n}(\xi) $, when the center $\xi$ is chosen uniformly on $S^2$ (which is equivalent to choosing a random rotation). 





The mean value is tautologically equal to the total number of lattice points times the area, that is 
$$ \int_{{\rm SO}(3)} Z(n;g\Omega_n )dg = N_n \sigma(\Omega_n) \;.$$
where $dg$ is the Haar probability measure on ${\rm SO}(3)$.

We turn to study the variance. 
Note that for ``random" points, the variance of the number of points  is the expected number of points, so one expects that 
\begin{conjecture}\label{Conj pois var} 
Let $\Omega_n$ be a sequence of spherical caps, or annuli. 
If $N_n^{-1+\epsilon}\ll \sigma(\Omega_n)\ll N_n^{-\epsilon}$ as  $n\to \infty$,  $n\neq 0,4,7 \bmod 8$, then 
\begin{equation}\label{eq:conj for Z}
\int_{{\rm SO}(3)} \Big| Z(n;g\Omega_n )  -N_n \sigma(\Omega_n) \Big|^2 dg  \sim N_n  \sigma(\Omega_n).
\end{equation}
\end{conjecture}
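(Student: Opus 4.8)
The plan is to diagonalise the average over ${\rm SO}(3)$ by spherical harmonics. Fix an $L^2(S^2,\sigma)$-orthonormal basis $\{Y_\ell^m\}$ of spherical harmonics (with $Y_0^0\equiv 1$) and expand the indicator $\mathbf 1_{\Omega_n}=\sum_{\ell\ge 0}\sum_{|m|\le\ell}a_{\ell m}Y_\ell^m$, so that $a_{00}=\sigma(\Omega_n)$ and, by Parseval, $\sum_{\ell,m}|a_{\ell m}|^2=\sigma(\Omega_n)$. Since $\mathbf 1_{g\Omega_n}(\bx)=\mathbf 1_{\Omega_n}(g^{-1}\bx)$ and $Y_\ell^m(g^{-1}\bx)=\sum_{m'}D^\ell_{m'm}(g)Y_\ell^{m'}(\bx)$ for the Wigner matrices $D^\ell(g)$, the degree-zero term produces exactly $N_n\sigma(\Omega_n)$, and
\begin{equation*}
Z(n;g\Omega_n)-N_n\sigma(\Omega_n)=\sum_{\ell\ge 1}\sum_{m,m'}a_{\ell m}\,D^\ell_{m'm}(g)\,W_{\ell m'}(n),\qquad W_{\ell m}(n):=\sum_{\bx\in\^\vE(n)}Y_\ell^m(\bx).
\end{equation*}
Squaring, integrating over ${\rm SO}(3)$, and using Schur orthogonality of the matrix coefficients, $\int_{{\rm SO}(3)}D^\ell_{m_1'm_1}\overline{D^\ell_{m_2'm_2}}\,dg=(2\ell+1)^{-1}\delta_{m_1'm_2'}\delta_{m_1m_2}$, collapses the double sum to
\begin{equation*}
\int_{{\rm SO}(3)}\bigl|Z(n;g\Omega_n)-N_n\sigma(\Omega_n)\bigr|^2\,dg=\sum_{\ell\ge 1}\frac{c_\ell(\Omega_n)}{2\ell+1}\,\|W_\ell(n)\|^2,
\end{equation*}
with $c_\ell(\Omega):=\sum_{|m|\le\ell}|a_{\ell m}|^2$ and $\|W_\ell(n)\|^2:=\sum_{|m|\le\ell}|W_{\ell m}(n)|^2$. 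For a cap $\Omega_n=\Ccap(\xi,r_n)$ the $c_\ell(\Omega_n)$ are explicit in terms of $\int_{\cos r_n}^1 P_\ell$; they satisfy $\sum_{\ell\ge1}c_\ell(\Omega_n)=\sigma(\Omega_n)(1-\sigma(\Omega_n))$ and are concentrated on $\ell\lesssim 1/r_n$ (with a power-law tail), and annuli behave the same way.

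Now apply the addition theorem $\sum_{|m|\le\ell}Y_\ell^m(\bx)\overline{Y_\ell^m(\by)}=(2\ell+1)P_\ell(\bx\cdot\by)$, which gives $\|W_\ell(n)\|^2=(2\ell+1)\bigl(N_n+\sum_{\bx\ne\by}P_\ell(\bx\cdot\by)\bigr)$ and hence
\begin{equation*}
\int_{{\rm SO}(3)}\bigl|Z(n;g\Omega_n)-N_n\sigma(\Omega_n)\bigr|^2\,dg=N_n\sum_{\ell\ge1}c_\ell(\Omega_n)+\sum_{\ell\ge1}c_\ell(\Omega_n)\sum_{\bx\ne\by}P_\ell(\bx\cdot\by).
\end{equation*}
The first term equals $N_n\sigma(\Omega_n)\bigl(1-\sigma(\Omega_n)\bigr)\sim N_n\sigma(\Omega_n)$, which is exactly the predicted right-hand side of \eqref{eq:conj for Z}. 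So the whole problem comes down to showing the pair-correlation remainder $\mathcal R(n):=\sum_{\ell\ge1}c_\ell(\Omega_n)\sum_{\bx\ne\by}P_\ell(\bx\cdot\by)$ is $o(N_n\sigma(\Omega_n))$. Since $\^\vE(n)=-\^\vE(n)$ and $P_\ell(-1)=(-1)^\ell$, the antipodal pairs $\by=-\bx$ contribute $N_n\sum_{\ell\ge1}(-1)^\ell c_\ell(\Omega_n)$; but $\sum_{\ell\ge0}(-1)^\ell c_\ell(\Omega)=\langle\mathbf 1_{-\Omega},\mathbf 1_\Omega\rangle_{L^2(\sigma)}=\sigma(\Omega\cap(-\Omega))$, which vanishes whenever $\Omega_n$ lies in an open hemisphere (in particular for caps, and for annuli of radius $o(1)$), so this contribution is $-N_n\sigma(\Omega_n)^2=o(N_n\sigma(\Omega_n))$ and what remains is $\sum_{\ell\ge1}c_\ell(\Omega_n)\sum_{\bx\ne\pm\by}P_\ell(\bx\cdot\by)$.

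Controlling this last quantity is the arithmetic heart of the matter, and I expect it to be the main obstacle. The $W_{\ell m}(n)$ are, up to normalisation, the Weyl sums of Linnik's equidistribution problem: they vanish identically unless the degree-$\ell$ harmonics carry an octahedral invariant, and otherwise are realised via the theta correspondence as Fourier coefficients of half-integral weight (weight $\ell+\tfrac32$) cusp forms. Equivalently, $\sum_{\bx\ne\by}P_\ell(\bx\cdot\by)=(2\ell+1)^{-1}\|W_\ell(n)\|^2-N_n$ is an $L^2$-smoothed count of pairs of points of $\^\vE(n)$ weighted by the degree-$\ell$ Legendre polynomial, which oscillates on the scale $1/\ell\approx r_n$; so $\mathcal R(n)=o(N_n\sigma(\Omega_n))$ is a precise, uniform form of the Poissonian pair-correlation phenomenon behind Conjecture~\ref{conj K}, strictly stronger than the upper bound of Theorem~\ref{thm:poisson}. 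Duke's theorem gives $W_{\ell m}(n)=o(N_n)$ for each fixed $\ell$, but this only yields $\sum_{\bx\ne\by}P_\ell(\bx\cdot\by)=o(N_n^2)$, far too weak; what one really needs is, for some fixed $\delta>0$, the genuine cancellation $\sum_{\bx\ne\pm\by}P_\ell(\bx\cdot\by)\ll N_n^{1-\delta}$ uniformly for $1\le \ell\lesssim 1/r_n$, i.e.\ for $\ell$ up to a small power of $n$.

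Assuming GRH for the relevant $L$-functions one gets essentially square-root cancellation, $W_{\ell m}(n)\ll_\epsilon (n\ell)^\epsilon n^{1/4}$, hence $\|W_\ell(n)\|^2\ll(2\ell+1)N_n n^{o(1)}$ uniformly for $\ell\le n^{1/4}$; this is enough to dispose of the tail $\ell\gtrsim r_n^{-1}n^{\epsilon}$ and to prove the near-sharp \emph{upper} bound $\int_{{\rm SO}(3)}|Z(n;g\Omega_n)-N_n\sigma(\Omega_n)|^2\,dg\ll N_n\sigma(\Omega_n)n^{o(1)}$, an improvement over what Theorem~\ref{thm:poisson} gives directly — but GRH supplies only an upper bound for $\|W_\ell(n)\|^2$, not the asymptotic $\|W_\ell(n)\|^2=(2\ell+1)N_n(1+o(1))$ that the conjecture forces in the bulk range $1\le\ell\lesssim 1/r_n$. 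Producing that power-saving asymptotic is the step I expect to require genuinely new input; a natural first target is the regime $\sigma(\Omega_n)\gg n^{-\alpha}$ with $\alpha>0$ small fixed, where only $\ell\ll n^{\alpha/2}$ occur and $\ell$-uniform quantitative refinements of Duke's method — or, equivalently, a direct asymptotic for Ripley's function $\ripleyK_r(\^\vE(n))$ with a power-saving error term in the range $N_n^{-1+\epsilon}\le r\le n^{-\alpha/2}$ — have a realistic chance of yielding the required cancellation.
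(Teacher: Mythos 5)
The statement you are attempting is labelled a \emph{conjecture} in the paper, and the paper does not prove it: it proves only the upper bound of Theorem~\ref{annuli prop}, namely $\int_{{\rm SO}(3)}|Z(n;g\Omega_n)-N_n\sigma(\Omega_n)|^2\,dg\ll_\epsilon n^\epsilon N_n\sigma(\Omega_n)$ under the Lindel\"of Hypothesis, and defers the asymptotic \eqref{eq:conj for Z} (for almost all $n$) to part~II. Your harmonic-analysis reduction is correct and is essentially the same skeleton the paper uses for that upper bound: the paper expands the indicator of the annulus as a point-pair invariant, obtains $V(n;\rho_1,\rho_2)=\sum_{m\ge1}h(m)^2\sum_j|W_{\phi_{j,m}}(n)|^2$ (your $\sum_\ell c_\ell(\Omega_n)(2\ell+1)^{-1}\|W_\ell(n)\|^2$ after the center-of-cap average is traded for the ${\rm SO}(3)$ average), and then bounds $|W_{\phi_{j,m}}(n)|^2=c\,n^{1/2}L(\tfrac12,f_{j,m})L(\tfrac12,f_{j,m}\times\chi_{-n})/L(1,\sym^2 f_{j,m})$ via Lindel\"of. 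Your diagnosis of where the difficulty lies is also the right one.

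The genuine gap is exactly the one you name: to get the asymptotic rather than the upper bound you must show $\sum_{\ell\ge1}c_\ell(\Omega_n)(2\ell+1)^{-1}\bigl(\|W_\ell(n)\|^2-(2\ell+1)N_n\bigr)=o(N_n\sigma(\Omega_n))$, i.e.\ a power-saving cancellation in $\sum_{\bx\ne\pm\by}P_\ell(\bx\cdot\by)$ uniformly for $\ell$ up to a power of $n$. In terms of the explicit formula this amounts to an \emph{asymptotic} for the weighted average over $j,m$ of the central values $L(\tfrac12,f_{j,m})L(\tfrac12,f_{j,m}\times\chi_{-n})$, which no pointwise bound (GRH or Lindel\"of) can supply; this is why the statement remains a conjecture here and is attacked in the sequel only on average over $n$. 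Two smaller points: your claim that the antipodal contribution is negligible fails for annuli that are symmetric about the equator of their center (where $\Omega\cap(-\Omega)=\Omega$ and, since $\vE(n)=-\vE(n)$, the variance genuinely doubles), so the conjecture as you have set it up needs the annuli to avoid that degenerate case; and your intermediate assertion that Lindel\"of/GRH already gives the sharp upper bound $\ll N_n\sigma(\Omega_n)n^{o(1)}$ is correct and is precisely Theorem~\ref{annuli prop}, so that part of your write-up is a proof of something the paper does establish, not of the conjecture itself.
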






\begin{theorem}\label{annuli prop}
Let $\Omega_n$ be a sequence of spherical caps, or annuli. 
Assume the Lindel\"of Hypothesis for standard $GL(2)/\Q$ L-functions. 
Then for squarefree $n\neq 7\bmod 8$,  we have  
\begin{equation}\label{eqn:new1.3} 
\int_{{\rm SO}(3)} \Big| Z(n;g\Omega_n )  -N_n \sigma(\Omega_n) \Big|^2 dg  
\ll_\epsilon n^\epsilon   N_n    \sigma(\Omega_n) ,\qquad \forall \epsilon>0 \;.
\end{equation}
\end{theorem}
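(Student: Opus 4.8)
The plan is to evaluate the left-hand side \emph{exactly} by a Peter--Weyl decomposition over ${\rm SO}(3)$ -- the analogue of the random-point computation in \cite{BRS Saff} -- and then to reduce everything to a bound for the Weyl sums $\sum_{\hat x\in\^\vE(n)}Y_\ell^m(\hat x)$, which are essentially Fourier coefficients of half-integral weight cusp forms; the Lindel\"of Hypothesis enters through the central values of the associated standard $GL(2)/\Q$ $L$-functions. Concretely, write $F=\mathbf 1_{\Omega_n}$, so $Z(n;g\Omega_n)=\sum_{\hat x\in\^\vE(n)}F(g^{-1}\hat x)$, and expand $F=\sum_{\ell\ge0}\sum_{|m|\le\ell}c_{\ell,m}Y_\ell^m$ in an $L^2(S^2,d\sigma)$-orthonormal basis of spherical harmonics. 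The $\ell=0$ term is the constant $\sigma(\Omega_n)$ and contributes $N_n\sigma(\Omega_n)$, so $Z(n;g\Omega_n)-N_n\sigma(\Omega_n)=\sum_{\ell\ge1}\sum_{\hat x}(P_\ell F)(g^{-1}\hat x)$. Expanding $(P_\ell F)(g^{-1}\hat x)$ through the Wigner matrices $D^\ell(g)$, using Schur orthogonality $\int_{{\rm SO}(3)}\overline{D^\ell_{ab}(g)}D^{\ell'}_{cd}(g)\,dg=\tfrac{1}{2\ell+1}\delta_{\ell\ell'}\delta_{ac}\delta_{bd}$ and the addition theorem $\sum_{|m|\le\ell}Y_\ell^m(x)\overline{Y_\ell^m(y)}=(2\ell+1)P_\ell(x\cdot y)$, one obtains the exact identity
\[
\int_{{\rm SO}(3)}\bigl|Z(n;g\Omega_n)-N_n\sigma(\Omega_n)\bigr|^2\,dg=\sum_{\ell\ge1}\|P_\ell F\|_{L^2}^2\sum_{\hat x,\hat y\in\^\vE(n)}P_\ell(\hat x\cdot\hat y).
\]

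The diagonal $\hat x=\hat y$ (with $P_\ell(1)=1$) contributes $N_n\sum_{\ell\ge1}\|P_\ell F\|_{L^2}^2=N_n\bigl(\|F\|_{L^2}^2-\sigma(\Omega_n)^2\bigr)=N_n\bigl(\sigma(\Omega_n)-\sigma(\Omega_n)^2\bigr)\le N_n\sigma(\Omega_n)$, which is, unconditionally, exactly the leading term predicted by Conjecture~\ref{Conj pois var}. It remains to control the off-diagonal $\sum_{\ell\ge1}\|P_\ell F\|_{L^2}^2\,S_\ell$, where $S_\ell:=\sum_{\hat x\ne\hat y}P_\ell(\hat x\cdot\hat y)$. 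Setting $A_\ell^m(n):=\sum_{\hat x\in\^\vE(n)}Y_\ell^m(\hat x)$ we have $\sum_{|m|\le\ell}|A_\ell^m(n)|^2=(2\ell+1)(N_n+S_\ell)$, so it suffices to prove the uniform Weyl-sum bound
\[
\sum_{|m|\le\ell}|A_\ell^m(n)|^2\ll_\epsilon (n\ell)^\epsilon\,\ell\,N_n\quad(\ell\ge1),\qquad\text{equivalently}\qquad |S_\ell|\ll_\epsilon (n\ell)^\epsilon N_n,
\]
and then to input the elementary estimate for the Legendre coefficients of a cap, $\|P_\ell F\|_{L^2}^2\ll \ell\,\sigma(\Omega_n)^2$ for $\ell\le\sigma(\Omega_n)^{-1/2}$ and $\|P_\ell F\|_{L^2}^2\ll\sigma(\Omega_n)^{1/2}\ell^{-2}$ for $\ell>\sigma(\Omega_n)^{-1/2}$ (for an annulus $F$ is a difference of two cap indicators and obeys the same bounds), whence $\sum_{\ell\ge1}\ell^\epsilon\|P_\ell F\|_{L^2}^2\ll_\epsilon\sigma(\Omega_n)^{1-\epsilon}$. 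Combining, the off-diagonal is $\ll_\epsilon n^\epsilon N_n\sigma(\Omega_n)^{1-\epsilon}$; and when $\sigma(\Omega_n)\le c/n$ every rotate of $\Omega_n$ has $\diam<\sqrt{2/n}$ while distinct points of $\^\vE(n)$ satisfy $|\hat x-\hat y|^2=2-2\langle x,y\rangle/n\ge2/n$, so $Z(n;g\Omega_n)\le1$ for every $g$ and $\int|Z-N_n\sigma|^2\,dg=N_n\sigma(\Omega_n)(1-N_n\sigma(\Omega_n))$ directly; otherwise $\sigma(\Omega_n)^{-\epsilon}\ll_\epsilon n^\epsilon$. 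In either case the off-diagonal is $\ll_\epsilon n^\epsilon N_n\sigma(\Omega_n)$, which together with the main term proves the theorem.

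The arithmetic content is thus entirely in the Weyl-sum bound. Multiplying $A_\ell^m(n)$ by $n^{\ell/2}$ identifies it with the $n$-th Fourier coefficient of the theta series $\theta_p(z)=\sum_{x\in\Z^3}p(x)e(|x|^2z)$ attached to the harmonic polynomial $p(x)=|x|^\ell Y_\ell^m(x/|x|)$; since $\deg p=\ell\ge1$, $\theta_p$ is a cusp form of weight $\ell+\tfrac32$ on $\Gamma_0(4)$. Decomposing the span of the $\theta_{p_\ell^m}$ ($|m|\le\ell$) into Hecke eigenforms $g_j$, applying the Shimura correspondence and the Waldspurger--Kohnen--Zagier formula (which takes a clean shape because $n$ is squarefree), one expresses the relevant $|a_{g_j}(n)|^2$ in terms of the central value $L(\tfrac12,f_j\otimes\chi_n)$, where $f_j$ is the Shimura lift of $g_j$ (a standard $GL(2)/\Q$ newform of weight $2\ell+2$) and $\chi_n$ is the quadratic character attached to $n$ (the condition $n\not\equiv7\bmod 8$ ensures the relevant sign and local conditions in Waldspurger's formula). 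The Lindel\"of Hypothesis gives $L(\tfrac12,f_j\otimes\chi_n)\ll_\epsilon(n\ell)^\epsilon$, and tracking the remaining normalizations together with $N_n=n^{1/2+o(1)}$ (valid since $n$ is squarefree and $n\not\equiv7\bmod 8$, so primitive points exist) yields the displayed bound on $\sum_m|A_\ell^m(n)|^2$.

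The hard part is making this Weyl-sum bound \emph{uniform in} $\ell$, i.e. losing only a factor $(n\ell)^\epsilon$ in the weight aspect. Beyond the Lindel\"of input for $L(\tfrac12,f_j\otimes\chi_n)$, one must control all of the $\ell$-dependence in the Shimura/Waldspurger machinery: the Petersson norms $\langle g_j,g_j\rangle$, the dimension ($\asymp\ell$) of the space of weight-$(\ell+\tfrac32)$ cusp forms spanned by the $\theta_{p_\ell^m}$, the coefficients expressing each $\theta_{p_\ell^m}$ in a Hecke eigenbasis, and the archimedean integrals in Waldspurger's formula, so that the total loss is only $(n\ell)^\epsilon$. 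A purely termwise estimate of $S_\ell$ -- bounding $P_\ell$ pointwise and counting lattice points on the circles $\{\,y\in\vE(n):\langle x,y\rangle=k\,\}$ -- loses a full power of $n$ and is useless; what is needed is genuine square-root cancellation in the double sum $S_\ell$, which is precisely what the $L$-value bound provides, and this is why the theorem is conditional on Lindel\"of rather than a consequence of mere equidistribution.
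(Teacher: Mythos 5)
Your argument is essentially the paper's proof: the same spherical-harmonic (Peter--Weyl) expansion reduces the variance to the exact identity $\sum_{\ell\ge1}\|P_\ell F\|_{L^2}^2\sum_{\hat x,\hat y}P_\ell(\hat x\cdot\hat y)$, the Weyl sums are converted to central $L$-values by a Waldspurger-type explicit formula uniform in the degree, Lindel\"of bounds those values, and the elementary decay of the cap/annulus Legendre coefficients closes the estimate. The only difference is the route to the explicit formula (you invoke the Shimura correspondence and Kohnen--Zagier for weight $\ell+\tfrac32$ theta series on $\Gamma_0(4)$, while the paper quotes the B\"ocherer--Schulze-Pillot/Luo formula via quaternionic Hecke eigenbases and Jacquet--Langlands, together with Hoffstein--Lockhart for the $L(1,\sym^2 f)$ denominator), and both treatments ultimately defer the degree-uniformity of that formula to the cited literature.
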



\subsection{The covering radius}
Given $P_1,\dots , P_N\in S^2$,
the covering radius $M(P_1,\dots, P_N)$
is the least $r>0$ so that every point of $S^2$ is
within distance at most $r$ of some $P_j$.
An area covering argument shows that for any configuration
$$M(P_1,\dots,P_N) \geq \frac 4{\sqrt{N}}\;.$$
  For random points, $M\leq
N^{-1/2+o(1)}$. 
An effective version of the equidistribution of $\^\vE(n)$
\cite{GF,  Duke-SP}  
yields some $\alpha>0$ such that $M(\^\vE(n)) \ll N_n^{-\alpha}$. 
Linnik's  conjecture in particular gives 
\begin{conjecture}\label{conj:covering}
  $M(\^\vE(n)) = N_n^{-1/2+o(1)}$ as $n\to \infty$.
\end{conjecture}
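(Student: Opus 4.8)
The lower bound $M(\^\vE(n))\geq 4/\sqrt N$ is the packing estimate already recorded, so I would focus entirely on the matching upper bound $M(\^\vE(n))\ll_\epsilon N^{-1/2+\epsilon}$. Since $N\asymp n^{1/2}$ and a spherical cap of Euclidean radius $\rho$ has $\sigma(\Ccap(\xi,\rho))\asymp\rho^2$, this upper bound is equivalent to the assertion that \emph{every} cap $\Ccap(\xi,\rho)$ with $\rho=\rho_n:=N^{-1/2+\epsilon}\asymp n^{-1/4+\epsilon/2}$ meets $\^\vE(n)$, i.e. $Z(n;\Ccap(\xi,\rho))\geq 1$ for all $\xi\in S^2$; and this would in turn follow from Linnik's conjectured equidistribution in shrinking caps, $Z(n;\Ccap(\xi,\rho))\sim N\sigma(\Ccap(\xi,\rho))$ uniformly in $\xi$ whenever $N\sigma(\Ccap(\xi,\rho))\gg n^\epsilon$. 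So the plan is to establish this uniform asymptotic.

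For that I would expand a smooth majorant and minorant of $\mathbf 1_{\Ccap(\xi,\rho)}$ in spherical harmonics, writing $\mathbf 1_{\Ccap(\xi,\rho)}\approx\sum_{\ell\geq 0}\sum_{|m|\leq\ell}c_{\ell m}(\xi,\rho)\,Y_{\ell m}$, which gives
\begin{equation*}
 Z(n;\Ccap(\xi,\rho))=N\sigma(\Ccap(\xi,\rho))+\sum_{\ell\geq 1}\sum_{|m|\leq\ell}c_{\ell m}(\xi,\rho)\,W_{\ell m}(n),\qquad W_{\ell m}(n):=\sum_{\bx\in\vE(n)}Y_{\ell m}\!\Big(\tfrac{\bx}{\sqrt n}\Big).
\end{equation*}
The main term is $N\sigma(\Ccap(\xi,\rho))\asymp N\rho^2\gg n^{2\epsilon}$, and one checks $\sum_{|m|\leq\ell}|c_{\ell m}(\xi,\rho)|^2\asymp\rho^4\ell$ for $\ell\lesssim1/\rho$, with the smoothing making the tail $\ell\gtrsim1/\rho$ negligible. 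The Weyl sums $W_{\ell m}(n)$ are, via the $\theta$-correspondence and Shimura's lift, essentially central values of $GL(2)/\Q$ $L$-functions; Duke's and Iwaniec's subconvex bounds give $W_{\ell m}(n)\ll_{\ell,\epsilon}n^{1/2-\delta_0}$ for an absolute $\delta_0>0$, while on Lindel\"of one expects $W_{\ell m}(n)\ll_\epsilon(\ell n)^\epsilon n^{1/4}$ and the mean value $\sum_{|m|\leq\ell}|W_{\ell m}(n)|^2\ll_\epsilon(\ell n)^\epsilon\,\ell\,n^{1/2}$ (the latter being the pair-correlation sum $\sum_{\bx\neq\by}P_\ell(\^\bx\cdot\^\by)$ in disguise). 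Feeding these into Cauchy--Schwarz over $m$ at each level and summing over $\ell\lesssim1/\rho$ bounds the error term and yields an effective discrepancy estimate, hence a covering-radius bound of the shape $M(\^\vE(n))\ll N^{-\alpha}$, with $\alpha$ a definite function of $\delta_0$ unconditionally and $\alpha$ at best approaching $\tfrac14$ on Lindel\"of. An alternative, purely arithmetic, implementation: a cap around a rational direction $\mathbf v\in\Z^3$ is $\{\bx\in\vE(n):\langle\bx,\mathbf v\rangle\in[\,|\mathbf v|\sqrt n(1-\tfrac{\rho^2}2),\,|\mathbf v|\sqrt n\,]\}$, so slicing by $t=\langle\bx,\mathbf v\rangle$ turns the cap count into a short sum over $t$ of binary-quadratic-form counts on the affine planes $\langle\bx,\mathbf v\rangle=t$, after which one transfers to arbitrary $\xi$ by Dirichlet approximation with denominators $\ll n^{O(1)}$.

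The hard part — and the reason the statement is still a conjecture — is that reaching the sharp exponent $1/2$ lies beyond all of this. The spectral/discrepancy route stalls near $N^{-1/4}$: even with the strongest conceivable bounds for the $W_{\ell m}(n)$, the summed error is only $o(N\rho^2)$ when $N\rho^2$ exceeds a fixed positive power of $n$, because the argument uses cancellation only \emph{within} each degree-$\ell$ shell and not between different $\ell$. The averaged second moment of Theorem~\ref{annuli prop} does not rescue this: it controls an average over ${\rm SO}(3)$, and since $N\sigma(\Ccap)\leq N\asymp n^{1/2}$ is bounded by a fixed power of $n$, Chebyshev cannot survive a union bound over the $\gg n^{C}$ caps of a net fine enough to recover the covering radius — while higher moments would demand cancellation in high-order correlations of the $W_{\ell m}$'s, i.e. GRH-type input for higher-rank $L$-functions with a uniformity that is simply not available. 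A genuine proof would have to count in the cap directly, via the delta-symbol (circle) method for the ternary form $x_1^2+x_2^2+x_3^2=n$ localized to a ball of radius $\asymp\sqrt n\,\rho$ about $\sqrt n\,\xi$; the obstruction there is that the expected count is only $\asymp n^\epsilon$, so the main term must be shown to beat error contributions of comparable generic size. This is of the same depth as, and essentially equivalent to, the conjectures recalled earlier in the introduction — the gap conjecture for sums of two squares and Linnik's mini-square conjecture — so I would attack the covering-radius conjecture together with those, expecting that closing the gap from $N^{-1/4}$ to $N^{-1/2}$ will require a new idea about the degree (level) aspect of these Weyl sums.
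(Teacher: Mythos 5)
The statement you were asked to prove is a conjecture, not a theorem: the paper offers no proof of $M(\^\vE(n)) = N_n^{-1/2+o(1)}$, and you were right not to manufacture one. Your assessment matches what the paper actually establishes. The lower bound $M \gg N_n^{-1/2}$ is the elementary packing estimate already recorded; the upper bound is obtained in the paper only conditionally and only with the exponent $-1/4$: Corollary~\ref{cor:covering radius} shows that the variance bound \eqref{eqn:new1.3} (itself conditional on Lindel\"of via Theorem~\ref{annuli prop}) yields $M(\^\vE(n)) \ll N_n^{-1/4-o(1)}$, while the full conjecture is attributed to Linnik's conjectured equidistribution in caps of area $N_n^{-1+\epsilon}$ --- exactly the reduction you describe. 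Your computation of where the spectral route stalls is accurate: with the Weyl-sum bound $|W_{\ell m}(n)|\ll n^{1/4+\epsilon}$ that Lindel\"of gives through the explicit formula \eqref{sp8}, the error in a cap of radius $\rho$ is of size $n^{1/4+\epsilon}$ against a main term $N_n\rho^2$, which is precisely the $N_n^{-1/4}$ threshold of Corollary~\ref{cor:covering radius}. Your observation that the ${\rm SO}(3)$-averaged variance cannot survive a union bound over a fine net of caps is also the reason the paper extracts from Theorem~\ref{annuli prop} only an \emph{almost-all} covering exponent of $-1/2$ rather than a pointwise covering-radius bound.

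One small correction of emphasis: the implication actually proved in \S\ref{sec:Littlewood} runs from the covering-radius conjecture \emph{to} the gap conjecture for sums of two squares (and Linnik's shrinking-cap conjecture implies the covering-radius conjecture), so calling them ``essentially equivalent'' slightly overstates what is on record; the heuristic identification of their common depth is nonetheless fair.
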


  We will show (\S~\ref{sec:proof of cor 1.8}) that \eqref{eqn:new1.3} implies a quantitative upper bound on the covering radius towards Conjecture~\ref{conj:covering}:  
\begin{corollary}\label{cor:covering radius}
For     $n\neq 0,4,7\bmod 8$, if  \eqref{eqn:new1.3} holds then   
$$ M(\widehat \vE(n))\ll  N_n^{-1/4-o(1)}\;.
$$
\end{corollary}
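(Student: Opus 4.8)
The idea is to turn the second-moment bound \eqref{eqn:new1.3} into a pointwise statement: if a spherical cap of some radius $r$ centered at a point $\xi$ contains \emph{no} point of $\^\vE(n)$, then for that particular rotation the deficiency $|Z(n;g\Omega_n)-N_n\sigma(\Omega_n)|$ is exactly $N_n\sigma(\Omega_n)$, which is large; if this happens on a set of rotations of positive measure, the variance bound is violated. Concretely, suppose $M(\^\vE(n))>2r$, so there is a point $\xi_0\in S^2$ whose distance to every $P_j\in\^\vE(n)$ exceeds $2r$. Then for every $\xi$ within distance $r$ of $\xi_0$, the cap $\Ccap(\xi,r)$ is still disjoint from $\^\vE(n)$, so $Z(n;\Ccap(\xi,r))=0$. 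Taking $\Omega_n=\Ccap(\xi_0,r)$ and letting $g$ range over the rotations carrying $\xi_0$ into $\Ccap(\xi_0,r)$ — a set of Haar measure $\asymp \sigma(\Ccap(\xi_0,r))\asymp r^2$ — the integrand in \eqref{eqn:new1.3} equals $(N_n\sigma(\Omega_n))^2\asymp (N_n r^2)^2$ on that set. Hence the left side is $\gg (N_nr^2)^2\cdot r^2$, while the right side is $\ll_\epsilon n^\epsilon N_n r^2$.

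**Carrying it out.** Comparing the two, $(N_n r^2)^2 r^2 \ll_\epsilon n^\epsilon N_n r^2$, i.e. $N_n r^4 \ll_\epsilon n^\epsilon$, which forces $r \ll_\epsilon n^\epsilon N_n^{-1/4}$. Therefore $M(\^\vE(n)) \le 2r$ cannot be violated for $r$ much larger than $N_n^{-1/4}$; more carefully, since the assumption ``$M(\^\vE(n))>2r$ produces an empty cap of radius $r$'' we conclude $M(\^\vE(n)) \ll_\epsilon n^\epsilon N_n^{-1/4}$. Rewriting $n^\epsilon$ using $N_n = n^{1/2+o(1)}$ (so $n^\epsilon = N_n^{2\epsilon+o(1)}$ and $\epsilon$ is arbitrary) gives the stated bound $M(\^\vE(n))\ll N_n^{-1/4-o(1)}$.

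**The main obstacle.** The substance is entirely in the geometric packaging — choosing $\Omega_n$ to be a cap whose radius $r$ is a free parameter we will optimize, and correctly lower-bounding the Haar measure of the ``bad'' set of rotations $\{g:\ g\xi_0\in\Ccap(\xi_0,r)\}$ by $\gg r^2$, using that $\mathrm{SO}(3)$ acts transitively on $S^2$ with the pushforward of Haar measure equal to $\sigma$. One small check: the set $\Omega_n=\Ccap(\xi_0,r)$ is a fixed ``nice'' set as required by Theorem~\ref{annuli prop}, and $r$ must lie in the admissible range, which is automatic once $r\asymp N_n^{-1/4}$ since then $N_n\sigma(\Omega_n)\asymp N_n^{1/2}$ sits comfortably between $N_n^\epsilon$ and $N_n^{1-\epsilon}$. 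There is no serious analytic difficulty here; the only thing to be careful about is bookkeeping of the $n^\epsilon$ and $o(1)$ factors, and the factor of $2$ in ``$M>2r \Rightarrow$ empty cap of radius $r$,'' both of which are harmless.
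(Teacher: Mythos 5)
Your argument is correct and is essentially the paper's own proof: both start from an empty cap of radius $\rho$ around some $\xi_0$, observe that all caps of radius $\delta\le\rho/2$ centered in $\Ccap(\xi_0,\rho/2)$ are then also empty, lower-bound the variance by $\gg\rho^2N_n^2\delta^4$, compare with the assumed bound $\ll n^\epsilon N_n\delta^2$, and optimize with $\delta\asymp\rho$ to get $\rho\ll N_n^{-1/4+o(1)}$. The only cosmetic difference is that you phrase the variance integral over the set of rotations carrying $\xi_0$ into the cap rather than directly over centers $\xi\in S^2$, which is the same thing by transitivity.
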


 Under the same assumptions, Theorem~\ref{annuli prop} implies that for a sequence of spherical caps $\Ccap(x,r_n)$, of area $A_n$, 
 \begin{equation}
 \sigma\Big\{ x\in S^2: \^\vE(n)\cap \Ccap(x;r_n) =\emptyset \Big\} \ll_\epsilon \frac{n^\epsilon}{N_n  A_n } ,\qquad \forall \epsilon>0\;.
 \end{equation}
 Thus almost all caps  with area $ \gg N_n^{-1+o(1)}$ contain points from $\^\vE(n)$. 
%
Put another way, the almost all covering exponent  
\begin{equation}\label{eq:new1.5}
-\sup\Big( \delta: \varlimsup_{n\to \infty} \sigma\Big\{x\in S^2: \^\vE(n)\cap \Ccap(x,N^{-\delta})\neq \emptyset \Big\} = 1\Big)
\end{equation}
 is equal to $-1/2$ (which is optimally small).

The ergodic method developed by Linnik \cite{Linnikbook} that was mentioned in the first paragraph allowed him 
to prove \eqref{unif dist} for $n$'s in special arithmetic progressions, such as those $n$'s for which a fixed auxiliary prime $p$ splits in $\Q(\sqrt{-n})$.  
In \cite{EMV'},  Ellenberg, Michel and Venakatesh outline an argument combining Linnik's method with the spectral gap property for an associated Hecke operator
$T_p$ on $L^2(S^2)$ \cite{LPS1}, to show that for $n$'s restricted to such a sequence, the almost all covering exponent  is equal to $-1/2$ (they carry the details of
the argument for the congruence analogue of the problem in \cite{EMV, EMV'}). 

In the sequel to this paper we examine Conjectures \ref{conj K} and \ref{Conj pois var}  for all $n$'s. 
 In particular we establish \eqref{eqn:new1.3} for $n$'s of the form $n=dm^2$ with $d$ fixed and squarefree, 
while if $d$ is varying and $m\gg n^\epsilon$ ($\epsilon>0$ arbitrary) then the almost all covering radius is shown to be $-1/2$. 
The main result in part II will be the proof of Conjectures \ref{conj K} and \ref{Conj pois var}  for almost all $n$. 

\section{Arithmetic background}\label{sec:arithmetic}
\subsection{The number of lattice points $N_n$}
We first recall what is known about the number of lattice points $N_n=\#\vE(n)$,
that is the number of representations of $n$ as a sum of three squares.
Gauss' formula expresses $N_n$ in terms of
class numbers. For $n$ square-free, $n>3$, it says that
\begin{equation*}
N_n = \begin{cases} 12 h(d_n),& n=1,2,5,6 \bmod 8\\ 24 h(d_n), & n=3\bmod 8
\end{cases}
\end{equation*}
where  if $n$ is
square-free,  $d_n$ is the discriminant of the imaginary quadratic
field $\Q(\sqrt{-n})$, that is $  d_n =    -4n$  if $ -n=2,3\bmod 4$ and $d_n=-n$ if $ -n=1\bmod 4$, 
and $h(d_n)$ is the class number of $\Q(\sqrt{-n})$.


Using Dirichlet's class number formula, one may then express $N_n$
by means of the special value $L(1,\chi_{-n})$ of the associated quadratic L-function, where 
$\chi_{-n}$ is the corresponding quadratic character
\begin{equation*}
  \chi_{-n}(m) = \leg{d_n}{m}
\end{equation*}
defined in terms of the Kronecker symbol. It is a Dirichlet character
modulo $|d_n|$. The resulting formula, for $n\neq 7\bmod 8$ square-free, is 
\begin{equation}\label{N in terms of L(1,chi)}
 N_n=\frac{24}{\pi}\sqrt{n}L(1,\chi_{d_n}) \;.
\end{equation}

For any $n$ we have an upper bound on the
number of such points of
\begin{equation*}
N_n\ll\ n^{1/2+\epsilon}
\end{equation*}
for all $\epsilon>0$.

In order that there be {\em primitive} lattice points
(that is $\bx=(x_1,x_2,x_3)$ with $\gcd(x_1,x_2,x_3)=1$) it is
necessary and sufficient that $n=b^2 m$ with $b$ odd and $m\neq 7\bmod
8$ square-free, equivalently that $n\neq 0, 4,7 \bmod 8$.
If there are primitive lattice points  then by Siegel's theorem we get a
lower bound
\begin{equation*}
N_n\gg n^{1/2-\epsilon} \;.
\end{equation*}

\subsection{The arithmetic function $A(n,t)$}

Let $A(n,t)$ be the number of (ordered) pairs $(\bx,\by)\in \vE(n)\times \vE(n)$ with inner product $\bx\cdot\by = t$, equivalently
$|\bx-\by|^2=2(n-t)$:
\begin{equation*}
A(n,t)  = \#\{(\bx,\by)\in \Z^3 \times \Z^3: |\bx|^2=|\mathbf
y|^2=n, \;\bx\cdot \by=t \} \\
\end{equation*}
which is the number of representions of the binary form $nu^2+2tuv +nv^2$ as a sum of three squares:
\begin{equation}\label{binary quad form}
 \sum_{j=1}^3 (x_ju+y_j v)^2 = nu^2 +2t uv + nv^2 \;.
\end{equation}

The arithmetic function $A(n,t)$ was studied by Venkov \cite{Venkov} \cite[Chaper 4.16]{Venkov book},
Pall \cite{Pall42, Pall 48} and others, who gave an exact formula for it as a product of local densities.
The formulas in  \cite[Theorem 4]{Pall 48} imply that
\begin{equation*}
A(n,t)  = 24\alpha_2(n,t)   \prod_{\substack{ p\mid n^2-t^2\\p\neq 2}} \alpha_p(n,t)
\end{equation*}
the product over odd primes dividing the discriminant $n^2-t^2$, where the factors $\alpha_p(n,t)$ are given as follows:

The $2$-adic density $\alpha_2(n,t)$ equals either one or zero (we will not need to specify when either happens).

To specify $\alpha_p(n,t)$ for odd primes $p$, we need some
notations:
For a prime $p$ and an integer $m$ we denote by $\ord_p(m)$ the
largest integer $k$ so that $p^k\mid m$
(when  $t =0$ we use the convention $\ord_p(0)=\infty$). If $p$ is an
odd prime then $\leg{m}{p}$ is the Legendre symbol. 

Assume now that $p$ is odd. Then
the quadratic form \eqref{binary quad form} is equivalent over the
$p$-adic integers $\Z_p$ to a diagonal one
\begin{equation*}
 \epsilon_1 p^{a_1} u^2+ \epsilon_2 p^{a_2} v^2
\end{equation*}
with $\epsilon_i$ being $p$-adic units and $0\leq a_1\leq a_2$ are given by
\begin{equation*}
  \begin{split}
a_1 &= \min(\ord_p(n),\ord_p(t))=\ord_p(\gcd(n,t)) \\
  a_1+a_2 &= \ord_p(n^2-t^2)
  \end{split}
\end{equation*}
Then
\begin{itemize}
\item If $a_1, a_2= 1 \bmod 2$ then
  \begin{equation}\label{alpha11}
    \alpha_p(n,t) =
p^{\frac{a_1-1}2} \frac{1-\frac 1{p^{(a_1+1)/2}} }{1-\frac 1p}(1+\leg{-\epsilon_1\epsilon_2}{p})
  \end{equation}
\item If $a_1=1 \bmod 2$, $a_2=0\bmod 2$ then
  \begin{equation}\label{alpha10}
\alpha_p(n,t) =
p^{\frac{a_1-1}2} \frac{1-\frac 1{p^{(a_1+1)/2}} }{1-\frac 1p} (1+\leg{-\epsilon_2}{p})
  \end{equation}
\item If $a_1=0\bmod 2$, $a_2=1\bmod 2$ then

\begin{equation}\label{alpha01}
\alpha_p(n,t)  =  p^{(a_1-2)/2} \frac{1-\frac 1{p^{a_1/2}} }{1-\frac 1p}
(1+ \leg{-\epsilon_1}{p}  )
+ p^{a_1/2} \sum_{k=0}^{a_2-a_1 } \leg{-\epsilon_1}{p}^k
\end{equation}

\item If $ a_1,a_2=0\bmod 2$ then
\begin{equation}\label{alpha00}
  \alpha_p(n,t) = 2p^{(a_1-2)/2} \frac{1-\frac 1{p^{a_1/2}} }{1-\frac 1p}
 + p^{a_1/2} \sum_{k=0}^{a_2-a_1 } \leg{-\epsilon_1}{p}^k
\end{equation}
\end{itemize}
In particular, if  $p\nmid 2n$  then $a_1=0$ and $\epsilon_1=n$, so
that $\leg{-\epsilon_1\epsilon_2}{p}=\chi_{-n}(p)$ and
\begin{equation*}
 \alpha_p(n,t) = \sum_{j=0}^{\ord_p(n^2-t^2)} \chi_{-n}(p^j)
\end{equation*}

Moreover, if $n$ is square-free and $p\mid n$ is odd then the above
formulas show that  if $p\nmid t$ (which is equivalent to $p\nmid
n^2-t^2$ in that case) then  $ \alpha_p(n,t) = 1$, while if  $p\mid
\gcd(n,t)$, so that $a_1=1$ and $p^2\mid n^2-t^2$, then $\alpha_p(n,t)\leq 2$.




We use  \eqref{alpha11},  \eqref{alpha10}, \eqref{alpha01}, \eqref{alpha00} to  bound $A(n,t)$ by the value of a
multiplicative function at $n^2-t^2$:
First assume that $n$ is squarefree. 
Let  $f_n$ be the multiplicative function whose values on prime powers are: $f_n(2^k)=1$, while for $p\neq 2$,
\begin{equation}\label{def of fn}
   f_n(p^k) =\begin{cases}
  \sum_{j=0}^k \chi_{-n}(p^j),&  p\nmid  n \\
   1,&   p\mid n \mbox{ and } k=1\\
   2,&  p\mid n \mbox{ and } k\geq 2
   \end{cases}
  \end{equation}
   Then the above computations yield that if $n$ is square-free and  $p$
  is  odd, then  $\alpha_p(n,t) \leq f_n(p^k)$, $k=\ord_p(n^2-t^2)$, hence:  
  \begin{lemma}\label{lem:a(n,t) bd by f}
  If $n$ is square-free, and $|t|<n$, then
   \begin{equation*}
   A(n,t) \leq 24 f_n(n^2-t^2) \;.
  \end{equation*}
  \end{lemma}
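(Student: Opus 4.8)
The plan is to turn the exact product formula for $A(n,t)$ into a bound by the multiplicative function $f_n$ evaluated at the discriminant $n^2-t^2$, prime by prime. Since $A(n,t)=24\alpha_2(n,t)\prod_{p\mid n^2-t^2,\ p\neq 2}\alpha_p(n,t)$ and $\alpha_2(n,t)\in\{0,1\}$, it suffices to show that for every odd prime $p$ dividing $n^2-t^2$ one has $\alpha_p(n,t)\le f_n(p^k)$ with $k=\ord_p(n^2-t^2)$; multiplying these local inequalities over all odd $p\mid n^2-t^2$ and using $f_n(2^j)=1$ then gives $A(n,t)\le 24 f_n(n^2-t^2)$. (One should note $|t|<n$ guarantees $n^2-t^2>0$ so the discriminant is a genuine positive integer and the product is finite.)

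The bulk of the argument is then a case check against the four displayed formulas \eqref{alpha11}, \eqref{alpha10}, \eqref{alpha01}, \eqref{alpha00}, organized by the parity of $(a_1,a_2)$, where $a_1=\ord_p(\gcd(n,t))$ and $a_1+a_2=k=\ord_p(n^2-t^2)$. First I would dispose of the case $p\nmid n$: then $a_1=0$, $\epsilon_1=n$, and the excerpt already records $\alpha_p(n,t)=\sum_{j=0}^{k}\chi_{-n}(p^j)=f_n(p^k)$, so there is equality. For $p\mid n$ with $n$ squarefree we have $\ord_p(n)=1$, so $a_1=\min(1,\ord_p(t))$ is either $0$ or $1$. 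If $p\nmid t$ then $a_1=0$ and in fact $p\nmid n^2-t^2$, so this $p$ does not occur in the product at all; if $p\mid t$ then $a_1=1$ and $a_2=k-1$ with $p^2\mid n^2-t^2$, i.e. $k\ge 2$. Here the relevant formulas are \eqref{alpha11} (when $k-1$ is odd, i.e. $a_2$ odd) and \eqref{alpha10} (when $a_2$ is even), both with $a_1=1$; plugging $a_1=1$ gives $\alpha_p(n,t)=\frac{1-1/p}{1-1/p}(1+(\tfrac{*}{p}))=1+(\tfrac{*}{p})\le 2=f_n(p^k)$ since $k\ge 2$. The remaining parity combinations $(a_1\ \mathrm{even},\ a_2\ \mathrm{odd})$ and $(a_1\ \mathrm{even},\ a_2\ \mathrm{even})$ with $p\mid n$ cannot arise when $n$ is squarefree ($a_1\in\{0,1\}$, and $a_1=0$ was excluded), so only \eqref{alpha11} and \eqref{alpha10} are needed in the $p\mid n$ case — which is the point where squarefreeness of $n$ does the real work.

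The main obstacle — really the only nontrivial point — is verifying that each $\alpha_p(n,t)$ is genuinely $\le f_n(p^k)$ rather than just $\le$ some constant, and in particular checking the $p\nmid n$ bound carefully: $f_n(p^k)=\sum_{j=0}^k\chi_{-n}(p^j)$ equals $k+1$, $1$, or $0$ according to whether $\chi_{-n}(p)=1$, $-1$ (with $k$ even), or $-1$ (with $k$ odd) or $\chi_{-n}(p)=0$, and one must confirm the formulas \eqref{alpha11}–\eqref{alpha00} with $a_1=0$ reproduce exactly this. Granting the local identities, the global bound is immediate by multiplicativity. I would therefore structure the proof as: (i) reduce to the local statement via the product formula and $\alpha_2\le 1$; (ii) handle $p\nmid n$ by the identity already in the text; (iii) handle $p\mid n$, $n$ squarefree, using only \eqref{alpha11} and \eqref{alpha10} with $a_1=1$ and the observation $k\ge2$; (iv) conclude.
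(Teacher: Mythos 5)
Your proof is correct and takes essentially the same route as the paper, which deduces the lemma from the prime-by-prime inequalities $\alpha_p(n,t)\le f_n(p^k)$ (with equality $\alpha_p=\sum_{j\le k}\chi_{-n}(p^j)$ for $p\nmid 2n$, and $\alpha_p=1+\leg{\ast}{p}\le 2$ from \eqref{alpha11}--\eqref{alpha10} with $a_1=1$ when $p\mid n$, $p\mid t$), combined with $\alpha_2\le 1$ and multiplicativity of $f_n$. Your case analysis simply spells out what the text compresses into ``the above computations yield that $\alpha_p(n,t)\le f_n(p^k)$.''
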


More generally, for $n$ which are not square-free, we have
\begin{lemma}\label{lem:A(n,t) for nonsquarefree n}
Let 
$$m = \prod_{\ord_p(\gcd(n,t))\geq 2} p^{\ord_p(\gcd(n,t))}$$
and write $n=mn_1,$ $t=mt_1$. Let $f_{m,n}$ be the multiplicative function defined by  
$f_{n,m}(2^k)=1$, and for $p$ odd 
\begin{equation}\label{def of fmn}
f_{m,n}(p^k)=\begin{cases}
k+1,& p\mid m\\ \sum_{j=0}^k \leg{-n}{p}^j,& p\nmid n\\ 
1,& p\nmid m,\; p\mid n,\; k=1 \\ 2,& p\nmid m,\; p\nmid n,\; k\geq 2
\end{cases}
\end{equation}
Then for all $\epsilon>0$, 
\begin{equation*} 
A(n,t) \ll m^{\frac 12}\tau(m) f_{n,m}(n_1^2 - m_1^2)
\end{equation*}
where $\tau(m)$ is the divisor function. 
\end{lemma}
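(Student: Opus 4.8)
The plan is to reduce the estimate to the squarefree‑type analysis behind Lemma~\ref{lem:a(n,t) bd by f}, by splitting Pall's exact product formula
\[
A(n,t)=24\,\alpha_2(n,t)\prod_{\substack{p\mid n^2-t^2\\ p\neq 2}}\alpha_p(n,t)
\]
according to whether the odd prime $p$ divides $m$. The first step is the $p$-adic bookkeeping: since $n=mn_1$, $t=mt_1$ one has $n^2-t^2=m^2(n_1^2-t_1^2)$, so for $p\nmid m$ the valuation $\ord_p(n^2-t^2)$ equals $\ord_p(n_1^2-t_1^2)$, while for $p\mid m$ the diagonalization exponents satisfy $a_1=\ord_p(\gcd(n,t))=\ord_p(m)$ and $a_2-a_1=\ord_p(n^2-t^2)-2\ord_p(m)=\ord_p(n_1^2-t_1^2)$; moreover at least one of $n_1,t_1$ is a $p$-adic unit. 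In particular the geometric sums of $\pm1$ appearing in \eqref{alpha01} and \eqref{alpha00} have length $a_2-a_1+1=\ord_p(n_1^2-t_1^2)+1$, which is exactly the value $f_{n,m}\bigl(p^{\ord_p(n_1^2-t_1^2)}\bigr)$ prescribed for $p\mid m$.

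For the odd primes $p\nmid m$ I would argue exactly as in the squarefree case. If $p\nmid n$ then $a_1=0$ and $\alpha_p(n,t)=\sum_{j=0}^{k}\leg{-n}{p}^{j}$ with $k=\ord_p(n_1^2-t_1^2)$, which is precisely $f_{n,m}(p^{k})$. If $p\mid n$ (but $p\nmid m$), then $p\mid n^2-t^2$ forces $p\mid t$, hence $a_1=1$; since $p$ then divides both $n-t$ and $n+t$ we get $\ord_p(n^2-t^2)\geq2$, so $k=\ord_p(n_1^2-t_1^2)\geq2$ and the odd-$a_1$ formulas \eqref{alpha11}--\eqref{alpha10} reduce to $\alpha_p(n,t)=1+\leg{\cdot}{p}\leq2=f_{n,m}(p^{k})$. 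Hence $\prod_{p\nmid m,\,p\text{ odd}}\alpha_p(n,t)\leq\prod_{p\nmid m,\,p\text{ odd}}f_{n,m}\bigl(p^{\ord_p(n_1^2-t_1^2)}\bigr)$.

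For the odd primes $p\mid m$ I would feed $a_1=\ord_p(m)\geq2$ and $a_2-a_1=\ord_p(n_1^2-t_1^2)$ into \eqref{alpha11}--\eqref{alpha00} and use only the crude bounds $\tfrac{p}{p-1}\leq\tfrac32$, $1+\leg{\cdot}{p}\leq2$, and $\bigl|\sum_{k=0}^{a_2-a_1}\leg{\cdot}{p}^{k}\bigr|\leq a_2-a_1+1$. In every one of the four cases this gives a main term $\leq p^{a_1/2}(a_2-a_1+1)$ together with a lower-order term $<p^{a_1/2}$, whence
\[
\alpha_p(n,t)\leq 2\,p^{\ord_p(m)/2}\bigl(\ord_p(n_1^2-t_1^2)+1\bigr)=2\,p^{\ord_p(m)/2}f_{n,m}\bigl(p^{\ord_p(n_1^2-t_1^2)}\bigr).
\]
Multiplying over $p\mid m$ and using $\prod_{p\mid m}p^{\ord_p(m)/2}=m^{1/2}$ and $2^{\omega(m)}\leq\prod_{p\mid m}(\ord_p(m)+1)=\tau(m)$, where $\omega(m)$ denotes the number of distinct prime factors of $m$, this factor is $\leq \tau(m)\,m^{1/2}\prod_{p\mid m}f_{n,m}\bigl(p^{\ord_p(n_1^2-t_1^2)}\bigr)$.

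Finally I would assemble the pieces. Since $\alpha_2(n,t)\leq1$ and $f_{n,m}(2^{k})=1$, the two partial products over $p\mid m$ and over $p\nmid m$ combine, by multiplicativity of $f_{n,m}$, into $f_{n,m}(n_1^2-t_1^2)$, giving
\[
A(n,t)\leq 24\,\tau(m)\,m^{1/2}\,f_{n,m}(n_1^2-t_1^2),
\]
with an absolute implied constant, which is the assertion. The one point that needs care is the valuation bookkeeping: checking that for $p\mid n$, $p\nmid m$ the prime $p$ always appears in $n^2-t^2$ to order at least $2$, so that the crude bound $\alpha_p(n,t)\leq2$ is dominated by $f_{n,m}(p^{k})=2$, and that the shift $a_2-a_1=\ord_p(n_1^2-t_1^2)$ is correctly matched to the argument $n_1^2-t_1^2$ of $f_{n,m}$. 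As in the squarefree case, the $p$-adic units $\epsilon_1,\epsilon_2$ never need to be identified, since only the above sign bounds are used.
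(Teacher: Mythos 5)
Your proof is correct and follows exactly the route the paper indicates but does not write out: Pall's local product formula together with casewise bounds on the densities \eqref{alpha11}--\eqref{alpha00}, split according to whether the odd prime divides $m$; the valuation bookkeeping ($a_1=\ord_p(m)$ and $a_2-a_1=\ord_p(n_1^2-t_1^2)$ for $p\mid m$, and $a_1=1$ with $k\ge 2$ whenever $p\mid n$, $p\nmid m$, $p\mid n^2-t^2$) and the extraction of $m^{1/2}\cdot 2^{\omega(m)}\le m^{1/2}\tau(m)$ are all as intended. You also correctly read $n_1^2-m_1^2$ in the statement as a typo for $n_1^2-t_1^2$, and the fourth case of \eqref{def of fmn} as $p\nmid m$, $p\mid n$, $k\ge 2$.
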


\subsection{Linnik's fundamental lemma}\label{sec:Linnik's lemma}

  We will need an upper bound for $A(n,t)$ valid for general $n$:
  \begin{proposition}\label{bd for A(n,t)}
  If $|t|<n$ then
   \begin{equation*}
    A(n,t) \ll \gcd(n,t)^{1/2} n^\epsilon, \quad \forall \epsilon>0\;.
   \end{equation*}
   \end{proposition}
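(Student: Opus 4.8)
The plan is to reduce Proposition~\ref{bd for A(n,t)} to the square-free estimate of Lemma~\ref{lem:a(n,t) bd by f} and the general estimate of Lemma~\ref{lem:A(n,t) for nonsquarefree n}, and then bound the relevant multiplicative functions crudely. First I would observe that both lemmas express $A(n,t)$ (up to constants and a factor $m^{1/2}\tau(m)$ in the non-square-free case) as the value of a multiplicative function $f$ at the integer $D = n^2 - t^2$, where $f(p^k) \ll k+1$ uniformly: indeed each of the defining cases in \eqref{def of fn} and \eqref{def of fmn} gives a value that is $O(k+1)$ — the geometric-sum cases $\sum_{j=0}^k \chi(p^j)$ are trivially at most $k+1$, the cases equal to $1$ or $2$ are bounded, and the $p\mid m$ case is exactly $k+1$. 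Hence $f(D) \le \tau'(D)$ for a suitable generalized divisor function, and the standard bound $\tau(D) \ll D^\epsilon = (n^2-t^2)^\epsilon \ll n^\epsilon$ (valid since $|t|<n$ so $0 < n^2-t^2 < n^2$) takes care of all the "generic" contribution.

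The point that actually produces the gain $\gcd(n,t)^{1/2}$ rather than merely $n^\epsilon$ is the behaviour at primes $p$ dividing $\gcd(n,t)$. In the square-free case, if $p \mid \gcd(n,t)$ then $f_n(p^k) \le 2$ for all $k$, so those primes contribute only $O(1)$ each, and altogether $A(n,t) \ll f_n(n^2-t^2) \ll 2^{\omega(n^2-t^2)} \ll n^\epsilon$; since $\gcd(n,t)^{1/2}\ge 1$ this is certainly $\ll \gcd(n,t)^{1/2}n^\epsilon$, so the square-free case is immediate. In the general case, write $n = m n_1$, $t = m t_1$ with $m = \prod_{\ord_p(\gcd(n,t))\ge 2} p^{\ord_p(\gcd(n,t))}$ as in Lemma~\ref{lem:A(n,t) for nonsquarefree n}. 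Then $A(n,t) \ll m^{1/2}\tau(m) f_{n,m}(n_1^2 - m_1^2)$. Using $f_{n,m}(p^k) \ll k+1$ we get $f_{n,m}(n_1^2-m_1^2) \ll (n_1^2 - m_1^2)^\epsilon \ll n^\epsilon$ (here I should double-check the intended argument is $n_1^2-t_1^2$, which is $\le n_1^2 < n^2$), and $\tau(m) \ll m^\epsilon \ll n^\epsilon$, so $A(n,t) \ll m^{1/2} n^\epsilon$. Finally $m \mid \gcd(n,t)$ by its very definition, so $m^{1/2} \le \gcd(n,t)^{1/2}$, giving $A(n,t) \ll \gcd(n,t)^{1/2} n^\epsilon$ as claimed.

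The main technical obstacle is really just the bookkeeping inside the two preceding lemmas: one must verify that every case in \eqref{def of fn} and \eqref{def of fmn} is genuinely $O(k+1)$ and that the "bad" primes (those with $p^2\mid\gcd(n,t)$, contributing the factor $m$) are exactly the ones isolated into $m$, so that the contribution of the remaining primes dividing $\gcd(n,t)$ is uniformly bounded. Once that is in hand, the proposition follows from the elementary divisor bound and the trivial inequality $m \le \gcd(n,t)$. I do not expect any analytic input beyond $\tau(N)\ll N^\epsilon$; in particular no information about $L(1,\chi)$ or equidistribution is needed here, which is consistent with the lemma being labelled "Linnik's fundamental lemma" — it is an elementary local-density computation.
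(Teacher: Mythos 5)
Your argument is correct and is exactly the deduction the paper intends: it states Proposition~\ref{bd for A(n,t)} as "a consequence of Lemma~\ref{lem:A(n,t) for nonsquarefree n}" without writing out the details, and your steps — bounding every case of \eqref{def of fn} and \eqref{def of fmn} by $k+1$, invoking $\tau(N)\ll N^\epsilon$, using $\tau(m)\ll n^\epsilon$, and observing $m\mid\gcd(n,t)$ so $m^{1/2}\le\gcd(n,t)^{1/2}$ — supply precisely that deduction (and you correctly flag the paper's typo $n_1^2-m_1^2$ for $n_1^2-t_1^2$). No issues.
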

  This kind of bound, a consequence of  Lemma~\ref{lem:A(n,t) for nonsquarefree n}, was stated and used by Linnik \cite{Linnik40},
  who omitted the factor of $\gcd(n,t)^{1/2}$. A correct version was
  given by Pall \cite[\S 7]{Pall42}, \cite[Theorem 4]{Pall 48},
  see also \cite[Section 4]{EMV} for a discussion of the case when $n$ is square-free.

Proposition~\ref{bd for A(n,t)} allows us to deduce a mean
equidistribution statement  for regions which on average contain
{\em  one} lattice point.
  To do so, divide the sphere $\sqrt{n}S^2$  into boxes $\{A_j\}$ of size $\approx n^{1/4}$ (so there are about $n^{1/2}$ such boxes); so one expects that there should be at most $n^\epsilon$ lattice points in each such box.
  We show that this expectation is met in the mean square, that is
  \begin{theorem}\label{thm:mean equidist}
   \begin{equation*}
   \sum_j \Big(\# A_j\cap \vE(n) \Big)^2 \ll n^{1/2+\epsilon}, \quad \forall \epsilon>0
  \end{equation*}
  \end{theorem}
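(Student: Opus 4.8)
The plan is to read the left-hand side as a count of pairs of lattice points lying in a common box, and then feed this into Linnik's bound, Proposition~\ref{bd for A(n,t)}. Expanding the square,
\[
\sum_j \Big(\#A_j\cap\vE(n)\Big)^2 \;=\; \#\Big\{(\bx,\by)\in\vE(n)\times\vE(n): \bx \text{ and } \by \text{ lie in the same box } A_j\Big\}.
\]
If $\bx$ and $\by$ lie in the same box then $|\bx-\by|\le\diam(A_j)\ll n^{1/4}$, so, writing $t=\bx\cdot\by\in\Z$ and using $|\bx-\by|^2=2n-2t$, we get $0\le n-t\ll n^{1/2}$. By Cauchy--Schwarz $t\le n$, with $t=n$ occurring only on the diagonal $\bx=\by$; and $t=-n$ would force $\by=-\bx$, which is impossible within one box once $n$ is large, since then $|\bx-\by|=2\sqrt n\gg n^{1/4}$. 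Hence the diagonal contributes exactly $N_n\ll n^{1/2+\epsilon}$ ordered pairs, while the off-diagonal pairs contribute at most $\sum_{t}A(n,t)$, the sum taken over integers $t$ with $1\le n-t\le Cn^{1/2}$ for a suitable constant $C$.

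It then remains to estimate this sum. By Proposition~\ref{bd for A(n,t)} we have $A(n,t)\ll_\epsilon\gcd(n,t)^{1/2}n^\epsilon$; substituting $s=n-t$, so that $\gcd(n,t)=\gcd(n,s)$ and $1\le s\le Cn^{1/2}$, the problem reduces to showing $\sum_{1\le s\le Cn^{1/2}}\gcd(n,s)^{1/2}\ll n^{1/2+\epsilon}$. I would prove this by grouping the $s$ according to $d=\gcd(n,s)$, necessarily a divisor of $n$: the $s\le Cn^{1/2}$ with a given such $d$ lie among the multiples of $d$, of which there are at most $Cn^{1/2}/d$, so
\[
\sum_{1\le s\le Cn^{1/2}}\gcd(n,s)^{1/2}\;\le\;\sum_{d\mid n}d^{1/2}\cdot\frac{Cn^{1/2}}{d}\;=\;Cn^{1/2}\sum_{d\mid n}d^{-1/2}\;\le\;Cn^{1/2}\,\tau(n)\;\ll\;n^{1/2+\epsilon}.
\]
Adding back the diagonal contribution $N_n\ll n^{1/2+\epsilon}$ gives the theorem.

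I do not anticipate a genuine obstacle: once Proposition~\ref{bd for A(n,t)} is in hand this is essentially a direct computation. The only points needing a little care are the bookkeeping that isolates the diagonal pairs $\bx=\by$ (and the remark that antipodal pairs never share a box), and the fact that one must use the sharp bound $A(n,t)\ll\gcd(n,t)^{1/2}n^\epsilon$ rather than Linnik's $\gcd$-free version, since the window $n-t\ll n^{1/2}$ does contain values of $t$ for which $\gcd(n,t)$ is as large as a power of $n$ (for instance $\gcd(n,t)=\sqrt n$ when $n$ is a perfect square and $t=n-\sqrt n$).
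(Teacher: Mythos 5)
Your proposal is correct and follows essentially the same route as the paper: interpret the left-hand side as counting pairs in a common box, reduce to $\sum_{n-O(n^{1/2})\le t\le n}A(n,t)$, invoke Proposition~\ref{bd for A(n,t)}, and finish with a divisor-switching estimate for the resulting $\gcd$-sum (the paper bounds $\sum_t\gcd(n,t)\ll\sqrt{n}\log n$, which dominates your $\sum_t\gcd(n,t)^{1/2}$; the two computations are interchangeable). Your closing remark on why the $\gcd(n,t)^{1/2}$ factor cannot be dropped matches the paper's own comment on Linnik's original statement.
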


  \begin{proof}
  Theorem~\ref{thm:mean equidist} is an immediate consequence of Proposition~\ref{bd for A(n,t)}, since
  \begin{equation*}
  \begin{split}
   \sum_j \Big(\# A_j\cap \vE(n) \Big)^2 &\ll \#\{x,y\in \vE(n):  |x-y| \ll n^{1/4} \}  \\
  &\ll  \sum_{n-n^{1/2}\leq t\leq n} A(n,t) \;.
  \end{split}
  \end{equation*}
  Applying Proposition~\ref{bd for A(n,t)} now gives
  $$
  \sum_j \Big(\# A_j\cap \vE(n) \Big)^2 \ll n^\epsilon \sum_{n-n^{1/2}\leq t\leq n} \gcd(n,t)^{1/2} \;.
  $$
  Thus it suffices to show that the mean value of $\gcd(n,t)$ over the interval $I = [n-\sqrt{n},n]$ is at most $n^\epsilon$.
  Writing
  $$\gcd(n,t) = \sum_{d\mid n, d\mid t} 1$$
  and switching order of summation gives
  \begin{equation*}
   \begin{split}
    \sum_{t\in I} \gcd(n,t)  &= \sum_{d\mid n} \#\{t\in I: d\mid t \}  
   \leq \sum_{d\mid n} \frac{|I|}{d} +O(1 ) \\
  &\ll |I| \log n +O(n^\epsilon) \ll \sqrt{n}\log n
   \end{split}
  \end{equation*}
  proving Theorem~\ref{thm:mean equidist}.
  \end{proof}

\section{Electrostatic energy}\label{sec:electrostatic}

In this section, we show that $\^\vE(n)$ give points with
asymptotically optimal  $s$-energy:
\begin{equation*}
   E_s(P_1,\dots, P_N):=  \sum_{i\neq j} \frac 1{|P_i-P_j|^s}\;.
  \end{equation*}
  In what follows we take $0<s<2$.

  \begin{theorem}\label{thm:extremal energy}
  Fix $0<s<2$. Suppose $n\to \infty$ such that $n\neq 0,4,7 \mod 8$. Then there is some $\delta>0$ so that
  \begin{equation*}
   E_s(\^\vE(n)) = I(s) N_n^2 +O(N_n^{2-\delta})
  \end{equation*}
 where 
\begin{equation*}
 I(s)=\iint_{S^2\times S^2} \frac 1{||x - y||^s} d\sigma(x)d\sigma(y) = \frac {2^{1-s}}{2-s}\;.
\end{equation*}
  \end{theorem}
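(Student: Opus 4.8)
The plan is to express the $s$-energy as a sum over the arithmetic function $A(n,t)$ counting pairs of lattice points at a given inner product, and then to separate the main term (coming from the equidistribution of $\^\vE(n)$ on $S^2$) from an error term controlled using the bounds on $A(n,t)$ from \S\ref{sec:arithmetic}. Concretely, for $\bx,\by\in\vE(n)$ with $\bx\cdot\by=t$ we have $|\^\bx-\^\by|^2 = 2(n-t)/n = 2(1-t/n)$, so
\begin{equation*}
E_s(\^\vE(n)) = \sum_{|t|<n} \frac{A(n,t)}{(2(1-t/n))^{s/2}} = n^{s/2} \sum_{|t|<n} \frac{A(n,t)}{(2(n-t))^{s/2}}\;.
\end{equation*}
(Here the diagonal $t=n$, i.e.\ $\bx=\by$, is excluded, which is consistent with the $i\neq j$ in the definition of $E_s$.) The main term should come from the ``generic'' values of $t$, where $A(n,t)$ is, on average, of size $N_n^2/n$ — reflecting that a random pair of projected points is uniformly distributed. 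The idea is to write $E_s(\^\vE(n)) = I(s)N_n^2 + (\text{error})$ by comparing the discrete sum to the integral $I(s)$, using the effective equidistribution of $\^\vE(n)$; equivalently, one writes $E_s(\^\vE(n)) = N_n^2 \iint \frac{1}{|x-y|^s}d\mu_n(x)d\mu_n(y) + (\text{diagonal})$, where $\mu_n$ is the empirical measure of $\^\vE(n)$, and exploits that $\mu_n \to \sigma$ with a power-saving rate (Duke, Golubeva--Fomenko) against smooth test functions, after regularizing the mildly singular kernel $|x-y|^{-s}$.

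The key steps, in order: (1) Write out the energy in terms of $A(n,t)$ as above. (2) Split the $t$-sum into a range $|n-t|\geq n^{1-\eta}$ (the ``bulk'', where $\^\bx,\^\by$ are well-separated and the kernel is smooth at scale $\geq n^{-\eta/2}$) and a range $|n-t|< n^{1-\eta}$ (the ``close pairs''), for a small parameter $\eta>0$ to be optimized. (3) For the close-pairs range, use Proposition~\ref{bd for A(n,t)}, $A(n,t)\ll \gcd(n,t)^{1/2}n^\epsilon$, together with the mean-value bound for $\gcd(n,t)$ proved in Theorem~\ref{thm:mean equidist} (or rather its proof), to bound $\sum_{|n-t|<n^{1-\eta}} A(n,t)\ll n^{1-\eta+\epsilon}$; since the kernel there is $n^{s/2}(2(n-t))^{-s/2} \ll 1$ for $s$ up to the point where $\int_0^{n^{1-\eta}} u^{-s/2}du$ converges — wait, here one must be slightly careful: $n^{s/2}\sum_{0<u<n^{1-\eta}} A(n,n-u) u^{-s/2}$ with $A$ bounded on average; a dyadic decomposition in $u$ combined with $\sum_{u\sim U} A(n,n-u)\ll (nU)^\epsilon U$ (from Proposition~\ref{bd for A(n,t)} and the $\gcd$ average) gives a contribution $\ll n^{s/2}\sum_{U \text{ dyadic}, U<n^{1-\eta}} U^{1-s/2} n^\epsilon \ll n^{s/2 + (1-\eta)(1-s/2)+\epsilon}$, which is $\ll n^{1-\eta s/2 + \epsilon}$. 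Comparing to $N_n^2 \asymp n^{1-o(1)}$ (via \eqref{N in terms of L(1,chi)} and Siegel), this is $\ll N_n^{2-\delta}$ provided $\eta s/2 > o(1)$-losses, i.e.\ for any fixed small $\eta$ and $0<s<2$ this is a genuine power saving. (4) For the bulk range, use effective equidistribution: partition $S^2\times S^2$ (minus a neighborhood of the diagonal) into regions of size $\gg n^{-\eta'}$, approximate $|x-y|^{-s}$ by a smooth function constant on each region up to error controlled by its modulus of continuity, and apply Duke's effective error term $|\frac{1}{N_n}Z(n;\Omega)-\sigma(\Omega)| \ll n^{-\alpha_0}$ on each region. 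This yields $E_s^{\mathrm{bulk}}(\^\vE(n)) = I(s)N_n^2 + O(N_n^2 n^{-\delta_1})$ for the bulk part, and one checks the piece of $I(s)$ removed by excising the diagonal neighborhood is $O(n^{-\eta'(2-s)})$, again a power saving since $s<2$. (5) Combine and choose $\eta,\eta'$ to produce a single $\delta>0$.

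I expect the main obstacle to be the interplay at the diagonal between the singularity of the kernel and the quality of effective equidistribution: one needs the regions in step (4) to be small enough that $|x-y|^{-s}$ is well-approximated, yet large enough that the effective equidistribution error (a fixed power $n^{-\alpha_0}$ with $\alpha_0$ not necessarily large) still beats the accumulated number of regions and the amplification by $|x-y|^{-s}\leq n^{\eta' s/2}$ near the excised diagonal. This is a balancing act but is clean precisely because $s<2$ keeps $I(s)$ finite and keeps $\int u^{-s/2}du$ convergent, so every error is a genuine negative power of $n$; the condition $s<2$ is used in an essential way in both the close-pairs estimate (step 3) and the diagonal-excision estimate (step 4). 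The arithmetic input — Proposition~\ref{bd for A(n,t)} and the $\gcd$ average — handles the close pairs with room to spare, so no new number theory is needed beyond what is in \S\ref{sec:arithmetic} and Duke's theorem; the work is entirely in organizing the two ranges and optimizing $\eta,\eta'$.
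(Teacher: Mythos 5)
Your proposal is essentially the paper's proof: the same split into close pairs (bounded via Proposition~\ref{bd for A(n,t)} together with the average of $\gcd(n,t)$ over the relevant interval) and distant pairs (handled by Duke-type effective equidistribution, which the paper implements by observing that the truncated kernel is zonal and applying spherical-cap discrepancy plus Koksma's inequality rather than your explicit partition of $S^2\times S^2$ --- a purely organizational difference). The only slip is algebraic: the close-pair contribution is $\ll n^{1-\eta(2-s)/2+\epsilon}$, not $n^{1-\eta s/2+\epsilon}$ (so it is indeed $s<2$, as you say later, that makes this a power saving), matching the paper's $O(n^{1-\rho(2-s)+\epsilon})$ with $\eta=2\rho$.
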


  \subsection{A division into close and distant pairs}

  We denote by
\begin{equation*}
 \bx \mapsto \^x=\frac{\bx}{\sqrt{n}}
\end{equation*}
the projection from the sphere $|\bx|^2=n$ to the unit sphere $S^2$.
  We  fix a small $\rho>0$ and divide the pairs of points in
  $\vE(n)\times \vE(n)$ into close
  pairs and distant pairs,
  depending on whether $||\^x-\^y||<n^{-\rho}$ or not. The projected
  points $\^\vE(n)$ are well separated: $||\^x-\^y||\geq n^{-1/2}$,
  hence we may take $\rho \leq 1/2$.
  We treat the contribution of close pairs by using the upper bound of Proposition~\ref{bd for A(n,t)} for
  the number $A(n,t)$ of pairs $\bx,\by\in \vE(n)$ with inner product
  $\langle \bx,\by\rangle =t$, and that of
  the distant pairs by using a quantitative form of the equidistribution
  of the sets $\^\vE(n)$ on the sphere.

  \subsubsection{The contribution of nearby points}
  \begin{lemma}
    The contribution of nearby pairs is bounded by
    \begin{equation*}
      \sum_{\substack{\bx\neq \by\in \vE(n)\\ ||\^x-\^y||<n^{-\rho}}}  \frac
      1{||\^x-\^y ||^s}
  \ll n^{1-\rho(2-s)+\epsilon} \;.
    \end{equation*}
  \end{lemma}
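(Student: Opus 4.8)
The plan is to group the nearby pairs according to their inner product $t=\bx\cdot\by$, which is an integer with $|t|\le n$ by Cauchy--Schwarz. Since $\^x,\^y$ lie on the unit sphere, $\|\^x-\^y\|^2 = 2-2\,\^x\cdot\^y = 2(n-t)/n$, so a pair is nearby precisely when $2(n-t)/n<n^{-2\rho}$, i.e. $0<n-t<\tfrac12 n^{1-2\rho}$ (we may assume $\bx\neq\by$, which forces $t\le n-1$, and in this $t$-range the antipodal case $\bx=-\by$ is excluded). Writing $\ell=n-t$ and recalling that the number of ordered pairs $(\bx,\by)\in\vE(n)\times\vE(n)$ with $\bx\cdot\by=t$ is exactly $A(n,t)$, the sum in question equals
\[
\Big(\tfrac n2\Big)^{s/2}\sum_{1\le \ell<\frac12 n^{1-2\rho}}\frac{A(n,n-\ell)}{\ell^{s/2}}.
\]
(If $\rho=\tfrac12$ the range of $\ell$ is empty and there is nothing to prove, so one may take $\rho<\tfrac12$.)

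Next I would apply Linnik's fundamental lemma, Proposition~\ref{bd for A(n,t)}, in the form $A(n,n-\ell)\ll_\epsilon \gcd(n,n-\ell)^{1/2}\,n^\epsilon=\gcd(n,\ell)^{1/2}\,n^\epsilon$. Using the elementary bound $\gcd(n,\ell)^{1/2}\le\sum_{d\mid n,\,d\mid\ell}d^{1/2}$ and switching the order of summation, the estimate reduces to controlling, for each $d\mid n$, the quantity $d^{1/2}\sum_{k\le X/d}(dk)^{-s/2}$, where $X=\tfrac12 n^{1-2\rho}$. Since $0<s<2$ we have $s/2<1$, so $\sum_{k\le Y}k^{-s/2}\ll Y^{1-s/2}$; this gives a contribution $\ll d^{1/2}\cdot d^{-s/2}\cdot(X/d)^{1-s/2}=d^{-1/2}X^{1-s/2}$, and summing over $d\mid n$ costs only $\sum_{d\mid n}d^{-1/2}\le\tau(n)\ll n^\epsilon$. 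Hence
\[
\sum_{1\le\ell<\frac12 n^{1-2\rho}}\frac{\gcd(n,\ell)^{1/2}}{\ell^{s/2}}\ll n^\epsilon X^{1-s/2}.
\]

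Collecting the estimates, the contribution of nearby pairs is
\[
\ll n^{s/2+\epsilon}\,X^{1-s/2}\ll n^{\,s/2+(1-2\rho)(1-s/2)+\epsilon}=n^{\,1-\rho(2-s)+\epsilon},
\]
which is the claimed bound, since the exponent simplifies as $s/2+(1-2\rho)(1-s/2)=1-2\rho(1-s/2)=1-\rho(2-s)$. The only substantive input is the sharp version of the bound for $A(n,t)$ with the $\gcd(n,t)^{1/2}$ factor (the factor Linnik originally omitted); everything else is elementary summation, and the exponent $1-\rho(2-s)$ emerges exactly because the diagonal blow-up $(n/\ell)^{s/2}$ of the Riesz kernel is balanced against the roughly $n^{1-2\rho}$ admissible values of $\ell$. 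I do not anticipate a real obstacle; the one point requiring a little care is that the $\gcd$-twisted $\ell$-sum does not blow up, which is why the $\gcd$ is distributed over divisors of $n$ (a bounded-in-$n^\epsilon$ set) rather than over divisors of $\ell$.
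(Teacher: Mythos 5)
Your proposal is correct and follows essentially the same route as the paper: parametrize the close pairs by $h=n-\bx\cdot\by$ so the sum becomes $n^{s/2}\sum_{1\le h\le \frac12 n^{1-2\rho}}A(n,n-h)(2h)^{-s/2}$, then apply Proposition~\ref{bd for A(n,t)} to get $A(n,n-h)\ll n^{\epsilon}\gcd(n,h)^{1/2}$ and sum. The only difference is that you spell out the elementary $\gcd$-twisted divisor sum that the paper leaves implicit, and that computation is correct.
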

  \begin{proof}
  The squares of the distances between points in $\vE(n)$ are of the
  form $||\^x-\^y||^2 = 2h/n$ for some integer $h$, since
 \begin{equation*}
||\^x-\^y||^2 = \frac{||\bx-\by||^2}n = \frac{2n-2\langle \bx,\by \rangle}n
\end{equation*}
  and $||\bx-\by||^2=2h$ is equivalent to $\langle \bx,\by \rangle = n-h$.
  Hence the number of pairs of points $\bx,\by\in \vE(n)$ at distance
  $||\bx-\by||^2=2h$ is $A(n,n-h)$, that is
\begin{equation*}
 \#\{\bx,\by\in \vE(n): ||\^x-\^y|| = \sqrt{\frac{2h}{n}} \} = A(n,n-h) \;.
\end{equation*}
  Therefore the contribution of close pairs to the sum $E_s$, that is
  pairs of points with $||\^x-\^y||<n^{-\rho}$,  is:
\begin{equation*}
\sum_{\substack{\bx\neq \by\in \vE(n)\\ ||\^x-\^y||<n^{-\rho}}}  \frac 1{||\^x-\^y ||^s} =
n^{s/2}\sum_{1\leq h\leq \frac 12 n^{1-2\rho}} \frac{A(n,n-h)}{(2h)^{s/2}} \;.
\end{equation*}

According to Proposition~\ref{bd for A(n,t)},
  \begin{equation*}
 A(n,n-h) \ll n^\epsilon \gcd(n,n-h)^{1/2} = n^\epsilon \gcd(n,h)^{1/2} \;.
\end{equation*}
  Hence the contribution of close pairs is bounded by
\begin{equation*}
  \sum_{\substack{\bx\neq \by\in \vE(n)\\ ||\^x-\^y||<n^{-\rho}}}  \frac 1{||\^x-\^y ||^s} \ll
  n^{s/2+\epsilon} \sum_{1\leq h\leq   n^{1-2\rho}} \frac{\gcd(n,h)}{(2h)^{s/2}} \ll n^{1-\rho(2-s)+\epsilon}
\end{equation*}
  as claimed.
  \end{proof}

  As a consequence, we may replace the potential $||\^x-\^y||^{-s}$
  by its truncated form
  \begin{equation*}
   F_n(\^x,\^y) = \min( \frac 1{||\^x-\^y||^s}, \frac 1{n^{s\rho}} )
  \end{equation*}
  to get
  \begin{equation}\label{cost of truncation}
   E_s(\vE(n)) =  \sum_{\bx\neq \by\in \vE(n)} F_n(\^x,\^y) +O(n^{1-\rho(2-s)+\epsilon})
  \end{equation}
  where the remainder term is negligible relative to the main term $N_n^2 I(s)$ since $N_n^2 \gg n^{1-\epsilon}$ by Siegel's theorem.

  \subsubsection{Distant pairs}
  For a fixed $\bx_0\in \vE(n)$, consider the $s$-energy sum
  \begin{equation*}
   S(\bx_0):=\frac 1{N_n}  \sum_{\substack{\bx\in \vE(n)\\  \bx\neq \bx_0}} F_n(\^x,\^x_0)
      = \frac 1{N_n} \sum_{\substack{\bx\in \vE(n)\\  \bx\neq \bx_0}}  \min ( n^{s\rho}, \frac 1{||\^x-\^x_0||^s} )
  \end{equation*}
  where $N_n=\#\vE(n)$.

  \begin{proposition}\label{prop: use duke}
 For $0<s<2$, there is some $\eta>0$ so that
  as $n\to \infty$, $n\neq 0,4, 7 \mod 8$,
  \begin{equation*}
   S(\bx_0) = I(s) + O(n^{-\eta+s \rho} +n^{-\rho(2-s)})
  \end{equation*}
where
\begin{equation*}
 I(s)=\int_{S^2} \frac 1{||x - \^x_0||^s} d\sigma(x) = \frac {2^{1-s}}{2-s}\;.
\end{equation*}
  \end{proposition}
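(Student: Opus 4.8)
The plan is to prove Proposition~\ref{prop: use duke} by splitting the sum $S(\bx_0)$ according to the dyadic scale of the distance $\|\^x-\^x_0\|$, estimating the number of lattice points in each spherical cap via effective equidistribution, and then matching the resulting Riemann sum against the integral $I(s)$. Concretely, for $0<\delta<1$ let $\Omega_\delta = \{x\in S^2: \|x-\^x_0\|<\delta\}$ be the spherical cap about $\^x_0$, with $\sigma(\Omega_\delta) \asymp \delta^2$. The key analytic input is an \emph{effective} form of Linnik's equidistribution (due to Duke \cite{Duke-SP} and Golubeva--Fomenko \cite{GF}, used elsewhere in the paper): there is $\eta>0$ such that for caps of area $\gg n^{-\eta}$,
\begin{equation*}
\frac{1}{N_n}\#\big(\^\vE(n)\cap \Omega_\delta\big) = \sigma(\Omega_\delta) + O(n^{-\eta}).
\end{equation*}
First I would record this as a lemma (it is the quantitative version alluded to in the introduction and in \S\ref{sec:electrostatic}).

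Next I would write $S(\bx_0)$ as a Stieltjes integral against the counting measure and integrate by parts. Set $G(\delta) = \frac{1}{N_n}\#(\^\vE(n)\cap \Omega_\delta)$, so that, using $F_n(\^x,\^x_0) = \min(n^{s\rho}, \|\^x-\^x_0\|^{-s})$ which equals $n^{s\rho}$ precisely when $\|\^x-\^x_0\|\le n^{-\rho}$,
\begin{equation*}
S(\bx_0) = n^{s\rho} G(n^{-\rho}) + \int_{n^{-\rho}}^{2} \delta^{-s}\, dG(\delta) = n^{s\rho}G(n^{-\rho}) + \Big[\delta^{-s}G(\delta)\Big]_{n^{-\rho}}^{2} + s\int_{n^{-\rho}}^{2} \delta^{-s-1} G(\delta)\, d\delta.
\end{equation*}
Now substitute $G(\delta) = \sigma(\Omega_\delta) + O(n^{-\eta})$ for $\delta$ in the range where the area exceeds $n^{-\eta}$, i.e. $\delta \gg n^{-\eta/2}$, and use the trivial bound $G(\delta)\ll \delta^2 + n^{-\eta}$ (or just $G(\delta)\le 1$ together with the pointwise separation $\|\^x-\^x_0\|\ge n^{-1/2}$) for the small-$\delta$ piece. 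The main term is then
\begin{equation*}
\Big[\delta^{-s}\sigma(\Omega_\delta)\Big]^{2} + \text{boundary} + s\int_0^2 \delta^{-s-1}\sigma(\Omega_\delta)\,d\delta,
\end{equation*}
which by the same integration by parts in reverse equals $\int_{S^2}\|x-\^x_0\|^{-s}\,d\sigma(x) = I(s)$, the convergence at $\delta=0$ being exactly where $s<2$ is used. The error terms are the contribution of the truncated inner region $\delta \le n^{-\rho}$ — which is $\ll n^{s\rho}\cdot n^{-\rho(2-s)} \cdot \text{(const)}$, i.e. $O(n^{-\rho(2-s)})$ after reinstating the $\delta^{-s}$ weights more carefully, essentially the computation already done in the ``nearby points'' lemma — plus the error from the equidistribution input, which after integrating $\delta^{-s-1}\cdot n^{-\eta}$ over $\delta\in[n^{-\eta/2},2]$ contributes $O(n^{-\eta} \cdot n^{(\eta/2)\cdot s}) = O(n^{-\eta(1-s/2)})$; absorbing constants and relabelling $\eta$ gives the stated shape $O(n^{-\eta+s\rho} + n^{-\rho(2-s)})$. (The term $n^{s\rho}G(n^{-\rho})$ is handled by $G(n^{-\rho})\ll n^{-\rho(2-s)}\cdot n^{-s\rho}$? — more honestly, by noting this term is part of the truncated inner contribution and is $\ll n^{s\rho}\cdot(n^{-2\rho} + n^{-\eta})$, which is dominated by the claimed errors.)

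The main obstacle is bookkeeping the two competing error sources against the free parameter $\rho$ and the equidistribution exponent $\eta$: one needs the cutoff scale for ``nearby'' pairs ($n^{-\rho}$) to be comfortably larger than the scale $n^{-\eta/2}$ below which effective equidistribution says nothing, so that the dyadic shells where we invoke equidistribution all have area $\gg n^{-\eta}$, while simultaneously keeping $n^{s\rho}$ from overwhelming the gain $n^{-\eta}$. This is a matter of choosing $\rho$ small relative to $\eta$ and $s$, exactly as in the preceding lemma; no new idea is required beyond carefully tracking which range of $\delta$ each estimate is valid on and confirming that the transition terms telescope. I expect the proof to then conclude by taking $\rho$ small enough (depending on $s$) that both exponents $-\eta+s\rho$ and $-\rho(2-s)$ are negative, which is what propagates into the final choice of $\delta$ in Theorem~\ref{thm:extremal energy}.
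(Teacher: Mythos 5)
Your argument is correct and is essentially the paper's proof: the paper likewise takes as input a power-saving spherical-cap discrepancy bound $D(\^\vE(n))\ll n^{-\eta}$ (obtained from the Iwaniec/Duke/Golubeva--Fomenko Weyl-sum estimates via Grabner's Erd\H{o}s--Tur\'an-type inequality), and your explicit integration by parts against the cap-counting function is exactly Koksma's inequality, which the paper invokes after passing to the one-dimensional sequence $z_j=\|\^x_j-\^x_0\|^2/4$; the truncation at $n^{-\rho}$ and the two error terms $n^{-\eta+s\rho}$ and $n^{-\rho(2-s)}$ arise identically. The only slip is bookkeeping: the discrepancy bound is uniform over all caps, so the lower limit of your error integral should be the truncation scale $n^{-\rho}$ rather than $n^{-\eta/2}$, which directly yields the stated exponent $-\eta+s\rho$ without relabelling.
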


  As an immediate consequence of Proposition~\ref{prop: use duke} we see, on using $N_n\gg n^{1/2-\epsilon}$, that
  \begin{equation*}
  \sum_{\bx\neq \by\in \vE(n)} F_n(\^x,\^y)   = N_n^2 I(s) + O\left(n^\epsilon (n^{-\eta+s \rho} +n^{-\rho(2-s)})\right)\;.
  \end{equation*}
  Taking into account \eqref{cost of truncation} we get
  \begin{equation*}
   E_s(\vE(n)) = I(s)N_n^2 + O\left(n^\epsilon (n^{-\eta+s \rho} +n^{-\rho(2-s)})\right)\;.
  \end{equation*}
  Taking $\rho = \eta/2$ 
we find
  \begin{equation*}
   E_s(\vE(n)) = I(s)N_n^2(1+O(n^{-\eta(1-\frac s2)+\epsilon}))
  \end{equation*}
  which proves Theorem~\ref{thm:extremal energy}. It remains to prove Proposition~\ref{prop: use duke}.

  \subsection{Using equidistribution}

  \subsubsection{Discrepancy on $\R/\Z$ }
  We begin with a short review of discrepancy on the circle, see \cite{KN}:
  For a sequence on the circle $X\subset \R/\Z$, we define Weyl sums by
  \begin{equation*}
   W(k,N):=\frac 1N \sum_{n\leq N} e(kx_n) \;.
  \end{equation*}
  Uniform distribution of $X$ is equivalent to $W(k,N)\to 0$ for all
  $k\neq 0$.

  The discrepancy of the sequence is defined as
  \begin{equation*}
   D_N(X):= \sup_I \left | \frac 1N\#\{ n\leq N: x_n \in I \} -
  \mbox{length}(I) \right|
  \end{equation*}
  where the supremum is over all intervals $I\subset \R/\Z$. Uniform
  distribution is equivalent to $D_N\to 0$. A quantitative measure,
  which also allows to treat shrinking intervals, is given by the
  Erd\"os-Tur\'an inequality, one variant being: 
  For all $M\geq 1$,
\begin{equation*}
    D_N(X) \ll \frac 1{M+1} + \sum_{k=1}^M \frac 1k \left| W(k,N) \right| \;.
\end{equation*}

  We also recall Koksma's inequality on $\R/\Z$, which bounds the sampling error in
  terms of the discrepancy: Let $X\subset [0,1]$ be a sequence of
  points, with discrepancy $D_N(X)$. If $f$ is continuous on $[0,1]$ and of
  bounded variation, with total variation $V(f)$, then
  \begin{equation}\label{Koksma's inequality}
  \left|  \frac 1N \sum_{n\leq N} f(x_n) - \int_0^1 f(x)dx \right| \ll
  D_N(X) \cdot V(f) \;.
  \end{equation}

  \subsubsection{Spherical coordinates}
  Fix a point $x_0$ on the unit sphere $S^2\subset \R^3$, and define spherical coordinates with $x_0$
  as the North Pole as follows: For a point $x\in S^2$, denote by $\theta\in [0,\pi]$ the angle of inclination,
  that is the angle between the zenith direction (the ray between the
  origin and $x_0$) and the ray from the origin to $x$,
  and by $\phi\in [0,2\pi)$ the azimuthal angle, which is the angle
    between a fixed direction in the plane through the origin orthogonal
    to the zenith direction,   and the ray from the origin to the
    projection of $x$ on that plane.
  Thus we have
  \begin{equation*}
   |x-x_0|^2 = 2(1-\cos \theta) \;.
  \end{equation*}
  In these coordinates, the normalized area measure on $S^2$ is $d\sigma = \frac 1{4\pi} \sin \theta d\theta d\phi$.

  We say  that a function on $S^2$ is {\em zonal} if it is invariant under
  rotation around the line between $x_0$ an the origin, that is depends
  only on the angle of inclination $\theta$.
  For any even $2\pi$-periodic function $g(\theta)$ we may define a zonal
  function on the sphere $S^2$ by setting $G(x) = G(\phi,\theta) =
  g(\theta)$.
  The average of $G$  over the sphere is related to the average of $g$
  over the interval $[0,\pi]$ via
  \begin{equation*}\label{rel between means}
    \int_{S^2} G(x) d\sigma(x) = \frac 12 \int_0^\pi g(\theta)\sin
    \theta d\theta \;.
  \end{equation*}

  \subsubsection{Uniform distribution and discrepancy on the sphere}
  Let $\mathcal H_\nu$ be the space of spherical harmonics of degree
  $\nu$. These are eigenfunctions of the Laplace-Beltrami operator on
  $S^2$, with eigenvalue $\nu(\nu+1)$. The dimension of the space is
  $\dim \mathcal H_\nu = 2\nu+1$. The span of all the spherical
  harmonics is dense in $L^2(S^2)$. Hence to prove equidistribution of
  the sets $\mathcal E(n)$  on the sphere it suffices to show that for all
  spherical harmonics $H\in \mathcal H_\nu$ of positive degree, the corresponding Weyl sums
  \begin{equation*}
 W(H,n):= \frac 1{\#\vE(n)} \sum_{\bx\in \vE(n)} H_\nu(\frac{\bx}{\sqrt{n}})
\end{equation*}
tend to zero.

  For a sequence of points $X\subset S^2$, the
  spherical cap discrepancy is defined as
  \begin{equation*}
    D_N(X):=\sup_C \left| \frac 1N \#\{n\leq N: x_n \in C\} - \sigma(C)\right|
  \end{equation*}
  where the supremum is over all spherical caps, and $\sigma$ is the
  normalized area measure.

   A bound for the discrepancy on the sphere, analogous for the Erd\"os-Tur\'an bound, is
  given by  \cite{Grabner}: For all $M\geq 1$,
   \begin{equation}\label{eq:Grabner}
     D_N(X) \ll \frac {1}{M+1} +\sum_{\nu=1}^M \frac 1\nu
     \sum_{j=1}^{\dim \mathcal H_\nu} \left| W(H_{\nu,j},N)\right|
   \end{equation}
  where $H_{\nu,j}$ denotes an orthonormal basis of $\mathcal H_\nu$.

  \subsubsection{Weyl sums on the sphere}
  A fundamental bound for Fourier coefficients of half-integer weight
  forms, due to Iwaniec \cite{Iwaniec}, allows one to prove uniform
  distribution of the points $\vE(n)$ on the sphere
  \cite{Duke,  GF}.
  We will need a quantitative version of that bound given in \cite{GF},
  see also \cite{Duke-SP}: There are constants $\gamma>0$ (small) and $A>0$
so that if $H_\nu\in \mathcal H_\nu$ is a spherical harmonic of degree $\nu>0$, then
  \begin{equation*}
    W(H_\nu,n) \ll \frac{n^{1/2}}{N_n} n^{-\gamma} \nu^A ||H_\nu||_\infty
  \end{equation*}
  (recall $N_n:=\#\vE(n)$).

  We take $n$'s for which there is a primitive point in $\vE(n)$,
  equivalently $n\neq 0,4,7 \mod 8$, then $\sqrt{n}/N_n\ll
  n^{\epsilon}$, $\forall \epsilon>0$. Moreover, we replace the
  $L^\infty$ norm by the $L^2$ norm via the inequality
  \begin{equation*}
    ||H||_\infty \leq \sqrt{\dim( \mathcal H_\nu)} \cdot ||H_\nu||_2,\quad \forall
    H_\nu\in \mathcal H_\nu
  \end{equation*}
  which gives:
  \begin{lemma}
   There are $\delta>0$, $B>0$ 
so that
  \begin{equation*}
     W(H_\nu,n) \ll n^{-\delta} \nu^B ||H_\nu||_2
  \end{equation*}
  for all $H_\nu\in \mathcal H_\nu$, $\nu>0$.
  \end{lemma}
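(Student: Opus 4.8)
The plan is to read the estimate directly off the three facts assembled in the paragraph immediately preceding the statement. Starting from the quantitative Golubeva--Fomenko/Iwaniec bound
\[
W(H_\nu,n) \ll \frac{n^{1/2}}{N_n}\, n^{-\gamma}\,\nu^A\, ||H_\nu||_\infty ,
\]
I would first invoke the congruence hypothesis $n\neq 0,4,7\bmod 8$: as recalled in \S\ref{sec:arithmetic}, this is precisely the condition guaranteeing that $\vE(n)$ contains a primitive lattice point, so Siegel's theorem gives $N_n\gg_\epsilon n^{1/2-\epsilon}$ for every $\epsilon>0$, whence $n^{1/2}/N_n\ll_\epsilon n^\epsilon$. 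Next I would trade the sup-norm for the $L^2$-norm via the standard pointwise bound $||H_\nu||_\infty \le \sqrt{\dim\mathcal H_\nu}\,||H_\nu||_2 = \sqrt{2\nu+1}\,||H_\nu||_2$ (which follows from expanding $H_\nu$ in an orthonormal basis of $\mathcal H_\nu$, Cauchy--Schwarz, and the fact that $\sum_j |H_{\nu,j}(x)|^2$ is the constant $(2\nu+1)/(4\pi)$ by the addition formula for zonal harmonics).

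Combining these two inputs with the displayed bound yields $W(H_\nu,n)\ll_\epsilon n^{\epsilon-\gamma}\,\nu^A(2\nu+1)^{1/2}||H_\nu||_2$, and since $\nu\ge 1$ we may absorb $\nu^A(2\nu+1)^{1/2}\ll \nu^{A+1}$. Fixing $\epsilon=\gamma/2$ and setting $\delta:=\gamma/2>0$ and $B:=A+1$ then gives the claimed inequality for all $\nu>0$.

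There is essentially no obstacle here: the lemma is a bookkeeping step that renormalizes the cited half-integral-weight Fourier coefficient bound into the scale-invariant shape (in terms of $||H_\nu||_2$) needed for the Erd\H{o}s--Tur\'an/Grabner machinery later. The only points deserving a line of care are that the restriction on $n\bmod 8$ is exactly what licenses Siegel's lower bound for $N_n$, and that the polynomial-in-$\nu$ factors combine uniformly in $\nu\ge 1$ into the single exponent $B$.
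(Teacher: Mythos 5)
Your proposal is correct and follows exactly the paper's own argument: the Golubeva--Fomenko/Iwaniec bound, the estimate $\sqrt{n}/N_n\ll n^{\epsilon}$ coming from Siegel's theorem in the primitive case $n\neq 0,4,7\bmod 8$, and the inequality $\|H_\nu\|_\infty\leq\sqrt{\dim\mathcal H_\nu}\,\|H_\nu\|_2$ are precisely the three ingredients the paper combines. Your extra remarks (the addition formula justifying the sup-norm bound, and the explicit choices $\delta=\gamma/2$, $B=A+1$) only make explicit what the paper leaves implicit.
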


  Applying the discrepancy bound \eqref{eq:Grabner} with $M\simeq n^{\delta/(B+1)}$ we
  get that the spherical cap discrepancy  $D(\^\vE(n))$ satisfies
  \begin{equation*}
    D(\^\vE(n)) \ll n^{-\eta}, \qquad \eta = \frac{\delta}{B+1}\;.
  \end{equation*}

  \subsection{Proof of Proposition~\ref{prop: use duke}}

  Consider the sequence of points in the interval $[0,1]$ given by
  \begin{equation*}
 z_j = \frac{||\^x_j-\^x_0||^2}4 =  \frac{1-\cos \theta_j}2 \in [0,1] \;.
\end{equation*}
  The area (with respect to $\sigma$) of the cap $||\bx-\bx_0||< 2\sqrt{t}$ is $t$, which is the length of
  the interval for the corresponding points $0\leq z=||\bx-\bx_0||^2/4\leq t$. Hence the
  discrepancy of the sequence $z_j$ on the interval $[0,1]$ is bounded
  by the spherical cap discrepancy of the sequence $\^x_j$, which is $\ll
  n^{-\eta}$. Hence by Koksma's inequality \eqref{Koksma's inequality},
for any continuous function $g$ of bounded variation on $[0,1]$ we have
  \begin{equation*}
    \left|  \frac 1N \sum g(z_j) - \int_0^1 g(t)dt \right| \ll
    n^{-\eta}\cdot V(g) \;.
  \end{equation*}

  Now take
  \begin{equation*}
    g_n(z) = \min(   \frac 1{(2z^{1/2})^s},  n^{s\rho} )
  \end{equation*}
  and
  \begin{equation*}
 G_n(\^x) = g_n(\frac{||\^x-\^x_0||^2}4   )=\min(\frac 1{||\^x-\^x_0||^s}, n^{s\rho}) \;.
\end{equation*}
  The total variation of $g_n$ is
  \begin{equation*}
    V(g_n) \ll \max g_n=n^{s\rho} \;.
  \end{equation*}
  Hence we find that
  \begin{equation*}
      \frac 1N_n \sum_{\substack{\bx\in \vE(n)\\ \bx\neq \bx_0}} G_n(\frac{\bx}{\sqrt{n}}) =
  \int_{S^2} G_n(x)  d\sigma(x) + O(n^{-\eta+s\rho}) \;.
  \end{equation*}

  The mean of $G_n$ is
  \begin{equation*}
    \int_{S^2} G_n(x)  d\sigma(x) =
  \int_{S^2} \frac
    1{|| x-\^x_0||^s} d\sigma(x)  + O(n^{-\rho(2- s)})
  \end{equation*}
  since the difference between the two integrals is certainly bounded
  by
  \begin{equation*}
    \int_{|x-\^x_0|<n^{-\rho}} \frac 1{||x-\^x_0||^s} d\sigma(x) =
   \int_0^{n^{-2\rho}/4}  \frac 1{(2\sqrt{z})^s} dz \ll n^{-\rho(2-s)}
\end{equation*}
  (recall we assume that $0<s<2$).

  In conclusion, we find that
  \begin{equation*}
     S(\bx_0)= I(s) + O(n^{-\eta+s\rho} +n^{-\rho(2-s)})
  \end{equation*}
  proving  Proposition~\ref{prop: use duke}. \qed

\section{Upper bounds on  Ripley's function}
\label{sec:poisson}

\subsection{Nair's Theorem} \label{sec:Nair}
We will need to use a result of M. Nair \cite{Nair} on mean values of
multiplicative functions of polynomial arguments over short
intervals. 
Nair's theorem, following several prior developments in the subject surveyed in \cite{Nair}, deals with the following situation:
Let $\mathcal M$ be the class of multiplicative, non-negative functions $f$  satisfying
\begin{itemize}
 \item $f(p^k)\leq A_0^k$ \ 
\item $f(n)\leq A_1(\epsilon) n^\epsilon$ for all $\epsilon>0$.
\end{itemize}
We are given an integer polynomial $P(t)=\sum_{j=0}^g a_j t^j\in \Z[t]$ of degree $g$, assumed to have distinct roots, with discriminant $D$, and such that $P(t)$ has no fixed prime divisor. We define the height of $P$ by $||P||:=\max_{j} |a_j|$. Let
\begin{equation*}
\rho(m)  =\#\{x\mod m: P(x)=0\mod m\}
\end{equation*}
and let
\begin{equation*}
 \overline{D} = \prod_{\substack{ p^a || D\\ \rho(p)\neq 0}} p^a \;.
\end{equation*}
\begin{theorem}[Nair \cite{Nair}]
Fix $\alpha,\delta\in (0,1)$. Then for $f\in \mathcal M$, $x^\alpha<y<x$, $x\gg ||P||^\delta$,
\begin{equation*}
 \sum_{x-y<m<x} f(|P(m)|) \ll_{\alpha,\delta,A_1} c(\overline{D}) y \prod_{p\leq x}(1-\frac{\rho(p)}{p})\exp (\sum_{p\leq x} \frac{f(p)\rho(p)}p )
\end{equation*}
where the implied constants depend only on the constant $A_1$ for the family $\mathcal M$, on $\alpha, \delta$ and on the reduced discriminant $\overline{D}$. 
\end{theorem}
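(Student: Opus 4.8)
The plan is to reproduce the Shiu--Nair method: a smooth--rough dissection of the polynomial values, combined with counting roots of $P$ in short arithmetic progressions. Discarding the at most $g$ integers $m$ with $P(m)=0$, we may assume $P(m)\neq 0$, and since $x\gg\|P\|^{\delta}$ we have $1\le|P(m)|\ll\|P\|^{O(1)}x^{g}\ll x^{O_{\delta}(1)}$ for all $m\in(x-y,x]$. The basic counting input, via the Chinese Remainder Theorem, is that for every $d\ge1$
\[
\#\{m\in(x-y,x]:\ d\mid P(m)\}=\frac{\rho(d)}{d}\,y+O(\rho(d)),
\]
where $\rho$ is multiplicative, $\rho(p)<p$ for all $p$ (this is where the no-fixed-prime-divisor hypothesis enters), $\rho(p)\le g$ for $p\nmid D$, and --- the point that isolates the $\overline{D}$-dependence --- $\rho(p^{k})=\rho(p)$ for every $k\ge1$ whenever $p\nmid D$, by Hensel's lemma.

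Fix a threshold $z=x^{\eta}$ with a small constant $\eta=\eta(\alpha,g)>0$, and for each $m$ factor $|P(m)|=a_{m}b_{m}$ into its $z$-smooth part $a_{m}$ and its $z$-rough part $b_{m}$, so that $f(|P(m)|)=f(a_{m})f(b_{m})$ by coprimality. Because $|P(m)|\ll x^{O_{\delta}(1)}$, the rough part satisfies $\Omega(b_{m})\le\kappa$ for some $\kappa=O_{\delta}(1/\eta)$ and all its prime factors exceed $z$; hence $f(b_{m})\le A_{0}^{\kappa}=O(1)$. The sum therefore reduces to
\[
\sum_{\substack{b\ z\text{-rough}\\ \Omega(b)\le\kappa}}f(b)\sum_{\substack{a\ z\text{-smooth}\\ ab\in(x-y,x],\ \gcd(a,b)=1}}f(a).
\]
In the inner (smooth) sum one drops the coprimality and smoothness side conditions, writes $f=\mathbf{1}*h$ with $h$ multiplicative ($h(p^{k})=f(p^{k})-f(p^{k-1})$) and $f(a)=\sum_{e\mid a}h(e)$, and applies the root count to the modulus $e$: the main term is $y\sum_{e\ z\text{-smooth}}h(e)\rho(e)/e$ and the error is $\ll\sum_{e\ z\text{-smooth}}|h(e)|\rho(e)$, which is $o(y)$ because $\eta$ small together with $y>x^{\alpha}$ keeps the relevant moduli below $yx^{-\epsilon}$. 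For the outer (rough) sum one peels off the largest prime factor $p>z$ of $b$: the complementary factor then lies in a proportionally shorter interval and in $\rho(p)\le g$ residue classes modulo $p$, and iterating at most $\kappa$ times produces the factor $\exp\!\big(\sum_{z<p\le x}f(p)\rho(p)/p\big)$ at a cost of only $O(1)$, since $\kappa$ is bounded.

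Assembling, the smooth main terms contribute $\ll y\prod_{p\le z}\big(1-\tfrac{\rho(p)}{p}\big)\sum_{k\ge0}f(p^{k})\rho(p^{k})/p^{k}$ (comparing the local factor $1+\sum_{k\ge1}h(p^{k})\rho(p^{k})/p^{k}$ with $\big(1-\rho(p)/p\big)\sum_{k\ge0}f(p^{k})\rho(p^{k})/p^{k}$, which agree up to a bounded ratio for $p>g$), and multiplying by the rough-part factor $\exp\!\big(\sum_{z<p\le x}f(p)\rho(p)/p\big)$ yields, via Mertens' theorem and $\rho(p^{k})=\rho(p)$ for $p\nmid D$ (so each such local factor is $1+f(p)\rho(p)/p+O(A_{0}^{2}/p^{2})$), the claimed bound $\ll c(\overline{D})\,y\prod_{p\le x}\big(1-\tfrac{\rho(p)}{p}\big)\exp\!\big(\sum_{p\le x}\tfrac{f(p)\rho(p)}{p}\big)$; the finitely many primes $p\mid D$ are precisely those carrying a possibly unbounded local factor, and those among them with $\rho(p)=0$ contribute $1$ and drop out, so only $\overline{D}$ survives. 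The main obstacle --- and essentially the refinement of Nair over Shiu --- is the bookkeeping in the middle step: one must choose $\eta$ and organise the rough-part recursion so that both the short-interval level-of-distribution constraint and the a priori unbounded values $f(p^{k})$ on rough prime-power divisors of $P(m)$ each cost only a bounded factor, while keeping every estimate uniform in $\|P\|$ (using only $x\gg\|P\|^{\delta}$) and in the ramified primes, so that the final dependence is on $\overline{D}$ rather than on the full discriminant $D$.
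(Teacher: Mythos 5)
This theorem is not proved in the paper at all: it is imported verbatim from Nair's Acta Arithmetica article, and the text only revisits two isolated steps of Nair's argument in order to check that the dependence on $\overline{D}$ is harmless for the specific polynomial $P(t)=n^2-t^2$. So the real question is whether your sketch is a sound reconstruction of Nair's proof, and it has a genuine gap at the central point of the Shiu--Nair method. You split $|P(m)|=a_mb_m$ at a \emph{fixed} threshold $z=x^{\eta}$ into its $z$-smooth and $z$-rough parts. But when $z$ is a fixed power of $x$, being $z$-smooth is not a sparse condition on integers of size $x^{O(1)}$, and the smooth part $a_m$ can be as large as $|P(m)|\asymp \|P\|\,x^{g}$ itself; it is certainly not bounded by any fixed power of $y$. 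Consequently your assertion that ``$\eta$ small together with $y>x^{\alpha}$ keeps the relevant moduli below $yx^{-\epsilon}$'' is unjustified and false as stated: the accumulated errors $O(\rho(e))$ over $z$-smooth moduli $e$, and the tail of the main term beyond the level of distribution of the short interval, are not $o(y)$ and can exceed $y$ by a power of $x$.

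What Shiu and Nair actually do is an \emph{adaptive} decomposition: writing $|P(m)|=p_1^{e_1}\cdots p_r^{e_r}$ with $p_1<\cdots<p_r$, one takes $a_m$ to be the longest initial-segment product not exceeding $z=y^{1/2}$, so that $a_m\le y^{1/2}$ by construction --- this is what keeps every modulus within the level of distribution of the interval $(x-y,x]$. One must then separately dispose of the exceptional $m$ for which this forces $a_m$ to be large and very smooth, namely $a_m\in(z^{1/2},z]$ with $P^+(a_m)<\log x\log\log x$; these $m$ are few precisely because smooth numbers at the threshold $\log x\log\log x$ have density $x^{o(1)}$. That exceptional-case estimate is exactly Nair's equation (6.3) --- the very step the paper singles out when tracking the $\overline{D}$-dependence --- and it has no counterpart in your sketch. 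Your peeling of large primes from the rough part and your final Euler-product bookkeeping (including the reduction from $D$ to $\overline{D}$ via $\rho(p^k)=\rho(p)$ for $p\nmid D$) are in the right spirit, but without the adaptive cut the argument does not close. A minor additional slip: the condition ``$ab\in(x-y,x]$'' in your displayed double sum conflates Shiu's setting, where $n=ab$ itself ranges over the interval, with Nair's, where it is $m$ that ranges over $(x-y,x]$ while $|P(m)|\asymp\|P\|\,x^{g}$.
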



We want to use the result for   the multiplicative functions $f_n$ of \eqref{def of fn}, the polynomial $P(t)=n^2-t^2$,  and $x=n-1$.  
In this  case we have
$$ \rho(m) = \#\{x\mod m: x^2=n^2 \mod m\}
$$
and hence
$$\quad \rho(p) =
\begin{cases} 2& p\nmid 2n \\1 &p\mid 2n    \end{cases}
$$
and moreover $\rho(p^k) = 2$ for $p\nmid 2n$.
In particular
$$ \overline{D} = D = -4n^2\;.
$$
Thus the unspecified dependence on $\overline{D}$ in Nair's theorem is an issue we need to address.

Examining the proof of Nair's theorem shows that there are only two places where the dependence on $\overline{D}$ appears:

a) In \cite[Lemma 2 (iii)]{Nair}, in the estimate
$$ \sum_{m\leq t} \frac{F(m)\rho(m)}m  \ll_{\overline{D}} \exp(\sum_{p\leq t} \frac{F(p)\rho(p)}p)
$$
where $F\in \mathcal M$. The dependence (at the bottom of page 262)
is in bounding the sum over higher prime powers
$$ \sum_{p\leq t}\sum_{\ell \geq 2} \frac{F(p^\ell)\rho(p^\ell)}{p^\ell} \ll 1 .$$
In our case, since $\rho(p^\ell) \leq 2$  this bound is clearly uniform in $\overline{D}\approx n^2$.

b) In the proof of his main theorem, in \cite[equation (6.3) on page 265]{Nair}, he employs the estimate
$$ y\sum_{\substack{ z^{1/2}<a\leq z \\P^+(a) <\log x\log\log x}} \frac{\rho(a)}a \leq c(\overline{D}) y^{7/8}$$
where $z=y^{1/2}$ and $P^+(a)$ denotes the greatest prime factor of $a$.
In our case, use $\rho(a)\ll a^\epsilon$ (independent of $n$) to bound the sum by
$$
y\sum_{\substack{ z^{1/2}<a\leq z \\P^+(a) <\log x\log\log x}} \frac{\rho(a)}a  \ll y \frac{z^\epsilon}{z^{1/2}} \Psi(z;\log x\log\log x)
$$
where $\Psi(x,z)$ is the number of $a<x$ with $P^+(a)<z$, which is known to satisfy (\cite[Lemma 3]{Nair})
$$\Psi(x;\log x \log\log x)\ll \exp(\frac{3\log x}{\sqrt{\log\log x}}) \ll x^\epsilon\;.
$$
Hence in our case we certainly have
$$y \sum_{\substack{ z^{1/2}<a\leq z \\P^+(a) <\log x\log\log x}} \frac{\rho(a)}a \ll y^{7/8}$$
uniformly in $n$ (recall $x^\alpha<y<x$).


\subsection{Reduction to bounding mean values of multiplicative functions}
For $0\leq a<b<n$ we set
\begin{equation}\label{def of M(n;a,b)}
 M(n;a,b) = \#\{|\bx|^2=|\by|^2=n, \; a<|\bx-\by|^2<b \} 
\end{equation}
so that
\begin{equation*}
\ripleyK_r(\^\vE(n)) = M(n;0,r^2n) \;.
\end{equation*}

Recall that we denote by $\chi_{-n}$ the quadratic  character
associated to the field $\Q(\sqrt{-n})$.
 We claim
\begin{proposition}\label{prop:M(n,y)}
Fix $0<\alpha<1$. Assume that $n$ is square-free, $n\neq 7\mod 8$, $a<b<n$
and $n^\alpha<b-a<n$. Then
 \begin{equation}\label{bound for M(n;a,b)}
  M(n;a,b) \ll (b-a) \cdot \exp(2\sum_{p<  n} \frac{ \chi_{-n}(p) }{p}  ) \;.
 \end{equation}
\end{proposition}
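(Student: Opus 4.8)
The plan is to reduce the count of pairs to a short sum of the arithmetic function $A(n,t)$, bound $A(n,t)$ by the multiplicative function $f_n(n^2-t^2)$ via Lemma~\ref{lem:a(n,t) bd by f}, and then invoke Nair's theorem with the polynomial $P(t)=n^2-t^2$. If $\bx,\by\in\vE(n)$ and $t=\bx\cdot\by$ then $|\bx-\by|^2=2(n-t)$, so the condition $a<|\bx-\by|^2<b$ becomes $n-\tfrac b2<t<n-\tfrac a2$, an interval $I$ of length $(b-a)/2$ on which $n/2<t<n$ (since $0\le a$ and $b<n$); in particular $|t|<n$. Hence
\[
M(n;a,b)=\sum_{n-b/2<t<n-a/2}A(n,t)\le 24\sum_{t\in I}f_n(n^2-t^2),
\]
the last inequality being Lemma~\ref{lem:a(n,t) bd by f}, valid since $n$ is squarefree with $n\neq 7\bmod 8$.

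Next I would apply Nair's theorem with $P(t)=n^2-t^2$, $f=f_n$, $x=n-a/2\asymp n$, $y=|I|=(b-a)/2$. The polynomial has distinct roots $\pm n$, discriminant $-4n^2$, height $\|P\|=n^2$, and no fixed prime divisor (since $\gcd(P(0),P(1))=\gcd(n^2,n^2-1)=1$), while $f_n\in\mathcal M$ because $f_n(p^k)\le k+1\le 2^k$ and $f_n(m)\le\tau(m)\ll_\epsilon m^\epsilon$. The hypotheses $x\gg\|P\|^\delta$ and $x^{\alpha_0}<y<x$ hold for $n$ large once we fix Nair's parameters $\delta<\tfrac12$ and $\alpha_0<\alpha$ (note $y>n^\alpha/2$, while $x\gg n$). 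Here $\rho(p)$, computed in \S\ref{sec:Nair}, equals $2$ for $p\nmid 2n$ and $1$ for $p\mid 2n$, with $\rho(p^\ell)\le 2$ and $\rho(a)\ll a^\epsilon$; thus, as explained there, the factor $c(\overline D)$ in Nair's bound stays uniform in $n$ even though $\overline D=4n^2$ grows. Nair's theorem then yields
\[
\sum_{t\in I}f_n(n^2-t^2)\ll (b-a)\prod_{p\le x}\Bigl(1-\frac{\rho(p)}{p}\Bigr)\exp\Bigl(\sum_{p\le x}\frac{f_n(p)\rho(p)}{p}\Bigr).
\]

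Finally I would write the right-hand side as one Euler product $\prod_{p\le x}\bigl(1-\tfrac{\rho(p)}p\bigr)e^{f_n(p)\rho(p)/p}$ and isolate the main term. For odd $p\nmid n$ we have $f_n(p)\rho(p)=2(1+\chi_{-n}(p))$, so the local factor is $(1-2/p)e^{2/p}\cdot e^{2\chi_{-n}(p)/p}=\bigl(1+O(1/p^2)\bigr)e^{2\chi_{-n}(p)/p}$; for $p\mid 2n$ (where $f_n(p)=1$, $\rho(p)=1$) it is $(1-1/p)e^{1/p}=1+O(1/p^2)$. The product of the $1+O(1/p^2)$ contributions converges, and since $\chi_{-n}(p)=0$ for odd $p\mid n$, $|\chi_{-n}(2)|\le 1$, and $n/2<x\le n$, the surviving factor is $\ll\exp\bigl(2\sum_{p<n}\chi_{-n}(p)/p\bigr)$. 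Combining the three displays gives \eqref{bound for M(n;a,b)}.

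The crux, and the only real difficulty, is the uniformity of Nair's theorem in the conductor: the general statement carries an unspecified dependence on the reduced discriminant $\overline D$, which here is as large as $4n^2$, so it cannot be used as a black box. One must check, as in \S\ref{sec:Nair}, that the two places where $\overline D$ enters Nair's argument — the bound for $\sum_{m\le t}F(m)\rho(m)/m$ in \cite[Lemma 2(iii)]{Nair} and the estimate \cite[(6.3)]{Nair} — become uniform in $n$ because in our situation $\rho(p^\ell)\le 2$ and $\rho(a)\ll a^\epsilon$. The remaining ingredients — the passage through $A(n,t)$ and the Mertens-type local computation above — are routine.
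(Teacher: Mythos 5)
Your proof is correct and follows essentially the same route as the paper: reduce $M(n;a,b)$ to $\sum_t A(n,t)$, bound $A(n,t)\le 24 f_n(n^2-t^2)$ via Lemma~\ref{lem:a(n,t) bd by f}, apply Nair's theorem to $P(t)=n^2-t^2$ with the uniformity in $\overline D$ checked as in \S\ref{sec:Nair}, and collapse the Euler product using $f_n(p)=1+\chi_{-n}(p)$ and Mertens to pass from $p<n-a/2$ to $p<n$. Your treatment of the local factors at $p\mid 2n$ and of the hypotheses of Nair's theorem is, if anything, slightly more explicit than the paper's.
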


\begin{proof}
The condition $a<|\bx-\by|^2 < b$ is equivalent to the inner
product of $\bx,\by$ satisfying
\begin{equation*}
 n- \frac b2 <  \bx\cdot \by < n-\frac a2
\end{equation*}
and hence
\begin{equation*}
M(n;a,b) = \sum_{n-\frac b2 <t< n-\frac a2} A(n,t)
\end{equation*}
where $A(n,t)$ is the number of pairs of vectors $\bx, \by\in\vE(n)$
with inner product $\bx\cdot \by =t$, equivalently
$|\bx-\by|^2=2(n-t)$.

According to Lemma~\ref{lem:a(n,t) bd by f}, we may bound $A(n,t)$ by
the value of a multiplicative function at $n^2-t^2$:
\begin{equation*}
 A(n,t) \leq 24 f_n(n^2-t^2)
\end{equation*}
where $f_n$ is the multiplicative function given by \eqref{def of fn}.
Therefore we find that we can bound
\begin{equation*}
 M(n;a,b) \ll   \sum_{n-\frac b2 <  t < n-\frac a2} f_n(n^2-t^2) \;.
\end{equation*}
This is a sum of a multiplicative function at polynomial values,
summed over an interval $(n-b/2,n-a/2)$, for which one can give an
upper bound using Nair's theorem \cite{Nair} described in
\S~\ref{sec:Nair}. The conclusion is that
\begin{equation*}
M(n;a,b) \ll  (b-a) \prod_{p< n-\frac a2} (1-\frac{2}{p}) \exp (
\sum_{p<n- \frac a2} \frac{2f_n(p) }p ) \;.
\end{equation*}
Since $f_n(p) = 1+\chi_{-n}(p)$ (for all $p$  with the convention
$\chi_{-n}(p)=0$ if $p\mid 2n$)
we get
\begin{equation*}
M(n;a,b) \ll (b-a) \exp ( 2\sum_{p<n-\frac a2} \frac{\chi_{-n}(p)
}p ) \;.
\end{equation*}
This is \eqref{bound for M(n;a,b)} except that the sum is over
primes $p<n-a/2$ instead of $p<n$. To recover \eqref{bound for
M(n;a,b)}, note that since $0\leq a <n$, we have
\begin{equation*}
\left| \sum_{n-\frac a2<p<n} \frac{\chi_{-n}(p)}{p} \right| \leq
\sum_{n/2<p<n} \frac{1}{p} \ll \frac 1{\log n}
\end{equation*}
by Mertens' theorem, and hence
\begin{equation*}
M(n;a,b) \ll (b-a) \exp ( 2\sum_{p<n } \frac{\chi_{-n}(p) }p )
\end{equation*}
 as claimed.
\end{proof}

\begin{corollary}\label{cor GRH bound}
 Assume that $n$ is square-free,
$n\neq 7\mod 8$. Then assuming the Generalized Riemann Hypothesis
(GRH),
\begin{equation*} 
\ripleyK_r(\^\vE(n)) \ll N_n^2r^2  \;.
 \end{equation*}
\end{corollary}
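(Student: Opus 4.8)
The plan is to combine Proposition~\ref{prop:M(n,y)} with the formula \eqref{N in terms of L(1,chi)} expressing $N_n$ in terms of $L(1,\chi_{d_n})$, using GRH to control both the prime sum appearing in the exponential and the size of $L(1,\chi_{d_n})$. Recall $\ripleyK_r(\^\vE(n)) = M(n;0,r^2n)$, so for $r \geq N_n^{-1+\epsilon}$ (hence $r^2 n \gg n^{\alpha}$ for a suitable $\alpha<1$, since $N_n \ll n^{1/2+\epsilon}$) Proposition~\ref{prop:M(n,y)} gives
\begin{equation*}
\ripleyK_r(\^\vE(n)) \ll r^2 n \cdot \exp\Big( 2\sum_{p<n} \frac{\chi_{-n}(p)}{p} \Big).
\end{equation*}
So it suffices to show that under GRH, $\exp\big( 2\sum_{p<n} \chi_{-n}(p)/p \big) \ll N_n^2/n$, equivalently $\exp\big( \sum_{p<n} \chi_{-n}(p)/p \big) \ll L(1,\chi_{d_n})$ up to absolute constants, by \eqref{N in terms of L(1,chi)}.

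The first step is to pass from the truncated prime sum to $\log L(1,\chi_{d_n})$. Writing $\log L(s,\chi_{-n}) = \sum_p \sum_{k\geq 1} \chi_{-n}(p)^k/(k p^{ks})$ and specializing at $s=1$, the $k\geq 2$ terms contribute an absolutely bounded amount, so $\sum_{p<n} \chi_{-n}(p)/p = \log L(1,\chi_{-n}) - \sum_{p\geq n}\chi_{-n}(p)/p + O(1)$. The tail $\sum_{p\geq n} \chi_{-n}(p)/p$ is exactly where GRH enters: partial summation against the character sum bound $\sum_{p\leq x}\chi_{-n}(p)\log p \ll x^{1/2}(\log nx)^2$ (valid under GRH for the Dirichlet $L$-function $L(s,\chi_{d_n})$, modulus $|d_n| \ll n$) shows $\sum_{p\geq n}\chi_{-n}(p)/p \ll 1$. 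Hence $\sum_{p<n}\chi_{-n}(p)/p = \log L(1,\chi_{-n}) + O(1)$, and exponentiating gives $\exp\big(2\sum_{p<n}\chi_{-n}(p)/p\big) \ll L(1,\chi_{-n})^2 \ll N_n^2/n$ by \eqref{N in terms of L(1,chi)}, which is the desired bound. (One should note $\chi_{-n}$ and $\chi_{d_n}$ agree away from $p \mid 2n$, and the finitely many ramified primes only affect the $O(1)$ terms, so this substitution is harmless.)

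The main technical point — and the only place the full strength of the hypothesis is used — is the GRH-conditional bound on the tail $\sum_{p\geq n}\chi_{-n}(p)/p$; as the authors remark after Theorem~\ref{thm:poisson}, what is genuinely needed is just a zero-free region wide enough to kill this tail, i.e. the absence of Siegel zeros, rather than GRH in full. Everything else is bookkeeping: checking that $r \geq N^{-1+\epsilon}$ together with $N \ll n^{1/2+\epsilon}$ guarantees $b-a = r^2 n$ exceeds $n^\alpha$ so that Proposition~\ref{prop:M(n,y)} applies, and absorbing the bounded prime-power and ramified-prime contributions into implied constants. I would also record that the same computation, via the Erdős–Turán/Mertens-type estimates already cited, shows the argument is uniform in $r$ over the stated range, giving Theorem~\ref{thm:poisson} at once.
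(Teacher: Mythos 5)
Your proposal is correct and follows essentially the same route as the paper: apply Proposition~\ref{prop:M(n,y)} with $a=0$, $b=r^2n$, then use \eqref{N in terms of L(1,chi)} together with the GRH-conditional estimate $\exp\bigl(\sum_{p<n}\chi_{-n}(p)/p\bigr)/L(1,\chi_{-n})=O(1)$. The only difference is that you spell out the last step (Euler product at $s=1$ plus partial summation on the tail), which the paper merely asserts as a consequence of GRH.
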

\begin{proof}

Taking $a=0$ and $b=r^2n$ in Proposition~\ref{prop:M(n,y)} gives, for $n^{-1/2+\delta}<r<n$, that 
\begin{equation*}
\ripleyK_r(\^\vE(n))   = M(n;0,r^2 n) \ll r^2 n \exp ( 2\sum_{p<n } \frac{\chi_{-n}(p) }p ) 
\end{equation*}
Using the Gauss-Dirichlet formula $ N_n=c_n\sqrt{n}L(1,\chi_{d_n})$ of \eqref{N in terms of L(1,chi)} gives for $n$ squarefree
 \begin{equation*}
\ripleyK_r(\^\vE(n))\ll  r^2N_n^2 \Big( \frac{\exp ( \sum_{p<n } \frac{\chi_{-n}(p) }p )}{L(1,\chi_{-n})} \Big)^2
\end{equation*}
It is a consequence of  GRH, that 
\begin{equation*}
 \sum_{p< x} \chi_{-n}(p) \ll x^{1/2}(nx)^\epsilon, \quad \forall \epsilon>0 \;.
\end{equation*}
 This implies
\begin{equation*}\label{lem on GRH}
 \frac 1{L(1,\chi_{-n})} \exp (  \sum_{p<  n} \frac{\chi_{-n}(p) }p ) =O(1)
\end{equation*}
which gives our claim (in fact what we require is the absence of ``Siegel zeros''). 
\end{proof}


We record the corresponding result when $n$ is not necessarily squarefree:
 \begin{corollary}\label{lem:M for nonsf}
 Assume that $n\neq 7\mod 8$ and $ n^{-1/2+\delta}<r<1$. Then assuming the Generalized Riemann Hypothesis,
  \begin{equation*}
 \ripleyK_r(\^\vE(n))   \ll   \Big(\sumflat_{m\mid n} m^{-\frac 12+\epsilon}  \Big) \cdot n L(1,\chi_{-n})^2  r^2 
  \end{equation*}
 for all $\epsilon>0$,  the sum $\sumflat_m$ running over all $m\mid n$ which are squarefull, that is   such that $m=\prod_p p^{k_p}$ with all $k_p\geq 2$. 
 \end{corollary}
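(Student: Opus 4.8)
The plan is to follow the proof of Corollary~\ref{cor GRH bound}, but to first sort the inner products $t=\bx\cdot\by$ according to the squarefull part of $\gcd(n,t)$ so that Lemma~\ref{lem:A(n,t) for nonsquarefree n} can be applied with constants uniform in $n$. Writing
\begin{equation*}
 \ripleyK_r(\^\vE(n)) = M(n;0,r^2 n) = \sum_{n-\frac{r^2 n}{2}<t<n} A(n,t),
\end{equation*}
I would group the terms according to the value of $m=m(t):=\prod_{\ord_p(\gcd(n,t))\geq 2}p^{\ord_p(\gcd(n,t))}$, which for every $t$ in the range is a squarefull divisor of $n$ dividing $t$. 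Putting $t=mt_1$ and $n=mn_1$, the condition $m(t)=m$ confines $t_1$ to an interval $I_m$ of length $\asymp r^2 n/m$ ending just below $n_1$; dropping the remaining congruence conditions on $t_1$ only inflates the count, so
\begin{equation*}
 \ripleyK_r(\^\vE(n)) \ll \sumflat_{m\mid n}\; \sum_{t_1\in I_m} A(n,mt_1),
\end{equation*}
the outer sum over squarefull $m\mid n$ (note $I_m=\emptyset$ once $m\geq r^2 n/2$).

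On each piece, Lemma~\ref{lem:A(n,t) for nonsquarefree n} bounds $A(n,mt_1)$ by $m^{1/2}\tau(m)$ times $f_{n,m}$ of \eqref{def of fmn} evaluated at $n_1^2-t_1^2$, and I would estimate $\sum_{t_1\in I_m}f_{n,m}(n_1^2-t_1^2)$ by Nair's theorem applied to the polynomial $P(t_1)=n_1^2-t_1^2$, exactly as in Proposition~\ref{prop:M(n,y)}. The uniformity checks are the ones made in \S\ref{sec:Nair}: $f_{n,m}\in\mathcal M$ with constants independent of $n$ and $m$ (for $p\mid m$, $f_{n,m}(p^k)=k+1\leq 2^k$; otherwise $f_{n,m}(p^k)\leq 2$); the root count $\rho(p)=\#\{x:x^2=n_1^2\bmod p\}$ equals $2$ off $2n_1$ and $1$ on $2n_1$, with $\rho(p^\ell)\leq 2$ throughout, so the two places in Nair's argument where the reduced discriminant enters are harmless even though $|D|\asymp n_1^2$; and $f_{n,m}(p)=1+\chi_{-n}(p)$ for $p\nmid n$. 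The finitely many $m$ for which $I_m$ is too short for Nair's theorem (necessarily $m\gg r^2 n^{1-o(1)}$) I would estimate trivially using $f_{n,m}\ll n^\epsilon$; they contribute $\ll r\,n^{1/2+o(1)}$, which is negligible compared with the main term precisely because $r>n^{-1/2+\delta}$.

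Collecting local factors, $\prod_p(1-2/p)e^{2/p}$ converges and the finitely many factors at primes $p\mid 2n$ are each bounded; only those at $p\mid m$ have a product that is not $O(1)$, and they contribute $\exp(O(\sum_{p\mid m}1/p))$. Since $\prod_{p\mid m}p\leq m^{1/2}$ for squarefull $m$, every prime factor of $m$ is $\ll\log m$, hence $\sum_{p\mid m}1/p\ll\log\log m$ and $\exp(O(\sum_{p\mid m}1/p))\ll_\epsilon m^\epsilon$ uniformly in $m$. Nair's theorem thus gives
\begin{equation*}
 \sum_{t_1\in I_m}f_{n,m}(n_1^2-t_1^2)\ll_\epsilon m^\epsilon\,\frac{r^2 n}{m}\,\exp\Big(2\sum_{p<n}\frac{\chi_{-n}(p)}{p}\Big),
\end{equation*}
where, as in Proposition~\ref{prop:M(n,y)}, replacing the prime sum over $p<n_1$ by one over $p<n$ costs only $O(1/\log n)$ in the exponent by Mertens. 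Hence $\sum_{t_1\in I_m}A(n,mt_1)\ll m^{-1/2+\epsilon}\tau(m)\,r^2 n\,\exp(2\sum_{p<n}\chi_{-n}(p)/p)$, and summing over squarefull $m\mid n$, absorbing $\tau(m)\ll m^\epsilon$, and invoking GRH exactly as at the end of Corollary~\ref{cor GRH bound} to replace $\exp(2\sum_{p<n}\chi_{-n}(p)/p)$ by $O(L(1,\chi_{-n})^2)$ yields the stated bound.

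The main obstacle is uniformity. Unlike in the squarefree case, the multiplicative function $f_{n,m}$ and the local densities feeding Nair's theorem now depend on the auxiliary modulus $m$ as well as on $n$, and one must both check that this does not spoil Nair's constants (which again reduces to $\rho(p^\ell)\leq 2$) and absorb the resulting arithmetic factor $\exp(O(\sum_{p\mid m}1/p))$ into $m^\epsilon$ — the latter being exactly where squarefullness of $m$ is used. The restriction $n^{-1/2+\delta}<r<1$ is then forced by keeping the intervals $I_m$ long enough for Nair's theorem and disposing of the few degenerate short pieces.
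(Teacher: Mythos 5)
Your argument is essentially the paper's own proof: decompose the sum over $t$ according to the squarefull part $m$ of $\gcd(n,t)$, bound $A(n,mt_1)$ via Lemma~\ref{lem:A(n,t) for nonsquarefree n}, apply Nair's theorem to $\sum_{t_1} f_{m,n}(n_1^2-t_1^2)$ on each piece with the same uniformity checks, absorb $\tau(m)$ and $\exp(O(\sum_{p\mid m}1/p))$ into $m^{\epsilon}$, and invoke GRH as in Corollary~\ref{cor GRH bound}; you are in fact a bit more careful than the paper about the divisors $m$ whose interval $I_m$ is too short for Nair's theorem. One harmless quibble: it is not true that every prime factor of a squarefull $m$ is $\ll\log m$ (take $m=p^2$), but the bound $\sum_{p\mid m}1/p\ll\log\log m$ holds for arbitrary $m$ by the standard argument, which is all you need.
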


 \begin{proof}
  We first show that, as in Proposition~\ref{prop:M(n,y)},  that for $n^{2\delta}<b-a<n$ 
  \begin{equation}\label{bd for M n nonsf}
  M(n;a,b)\ll (b-a)\Big(\sumflat_{m\mid n} m^{-\frac 12+\epsilon}  \Big) L(1,\chi_{-n})^2
  \end{equation}
 which will prove the Lemma, since  $\ripleyK_r(\^\vE(n)) = M(n;0,r^2n)$. 
 
 According to  Lemma~\ref{lem:A(n,t) for nonsquarefree n}
 \begin{equation*}
 A(n,t) \ll m^{1/2}\tau(m) f_{m,n}(n_1^2-t_1^2)
 \end{equation*}
 where 
 $$m = \prod_{\ord_p(\gcd(n,t))\geq 2} p^{\ord_p(\gcd(n,t))}$$
with   $n=mn_1,$ $t=mt_1$, and $f_{m,n}$ is the multiplicative function \eqref{def of fmn}. 
   Therefore, with $a=ma_1$, $b=mb_1$,  
 \begin{equation}\label{intermediate M}
 M(n;a,b) = \sum_{n-\frac b2<t<n-\frac a2} A(n,t)
 \ll \sumflat_m m^{1/2}\tau(m)  \sum_{n_1-\frac{b_1}2<t_1<n_1-\frac{a_1}2} f_{m,n}(n_1^2-t_1^2) 
 \end{equation}
 the sum $\sumflat_m$ running over all $m\mid n$ which are squarefull, that is 
 such that $m=\prod_p p^{k_p}$ with all $k_p\geq 2$. 
 
 For $m$, $n$  fixed estimate the inner sum using Nair's theorem, noting that $b_1-a_1=\frac{ b-a}m \in (n^{2\delta},n)$, obtaining the bound
 \begin{equation*}
 \begin{split}
  \sum_{n_1-\frac{b_1}2<t_1<n_1-\frac{a_1}2} f_{m,n}(n_1^2-t_1^2) &\ll 
  (b_1-a_1)\prod_{p<n_1-\frac{a_1}2} (1-\frac 2p)\exp\Big (\sum_{p<n_1-\frac{a_1}2} \frac{2f_{m,n}(p)}{p} \Big ) 
  \\ &\ll \frac{b-a}m \exp\Big (2\sum_{p\mid m} \frac 1p + 2\sum_{\substack{p<n-\frac{a_1}2\\ p\nmid m}} \frac{\chi_{-n}(p)}{p} \Big)
  \\& \ll  \frac{b-a}m (\log\log m)^C \exp\Big (  2\sum_{ p<n-\frac{a_1}2 } \frac{\chi_{-n}(p)}{p} \Big)
  \\ &\ll \frac{b-a}{m^{1-\epsilon} }L(1,\chi_{-n})^2
  \end{split}
  \end{equation*}
  (the last inequality assumes GRH, or the absence of Siegel zeros). 
  Inserting into \eqref{intermediate M} proves \eqref{bd for M n nonsf}.  
 \end{proof}

\subsection{Proof of Corollary~\ref{cor:absolute continuity}}
 We now show that weak limits of the nearest neighbour spacing
measures
\begin{equation}\label{def of nearest nbr msr}
 \mu(\^\vE(n)) := \frac 1N \sum_{j=1}^N \delta_{\frac N4 d_j^2}
\end{equation}
are absolutely continuous, in fact that there is some $c_4>0$ so
that any weak limit $\nu$ of \eqref{def of nearest nbr msr} for 
$n\neq 7\bmod 8$ squarefree, 
satisfies $\nu \leq c_4 dx$. For this we need to show that for any
fixed $0\leq \alpha <\beta<\infty$, the proportion of normalized
nearest neighbour spacings $\frac N4 d_j^2$ which lie in the
interval $[\alpha,\beta]$ satisfies
\begin{equation*}
\frac 1{N} \#\{j\leq N: \alpha\leq \frac N4 d_j^2 <\beta \} \leq
c_4(\beta-\alpha) \;.
\end{equation*}
Since the number of normalized nearest neighbour spacings in an
interval is bounded by  the number of all normalized spacings in
that interval, it suffices to show that
\begin{equation*}
\frac 1{N_n} \#\{ |\bx|^2 = |\by|^2 = n: \alpha< \frac {N_n}4
|\frac {\bx}{\sqrt{n}} - \frac{\by}{\sqrt{n}}| <\beta \}\leq
c_4(\beta-\alpha) \;.
\end{equation*}
The LHS is, in the notation of \eqref{def of M(n;a,b)}, equal to 
\begin{equation*}
\frac 1{N_n} M(n; \alpha \frac{4n}{N_n}, \beta \frac{4n}{N_n}) 
\ll \frac{ (\beta-\alpha)n }{N_n^2} 
\exp\Big(2\sum_{p<n} \frac{\chi_{-n}(p)}{p} \Big)
\end{equation*}
by Proposition~\ref{prop:M(n,y)}. Using    \eqref{N in terms of L(1,chi)} for $n\neq 7\bmod 8$ squarefree, we replace $n/N_n^2$ by $ 1/L(1,\chi_{-n})^{2}$, and as in the proof of 
Corollary~\ref{cor GRH bound} we use GRH to deduce that  
$\exp\Big(2\sum_{p<n} \frac{\chi_{-n}(p)}{p} \Big)/L(1,\chi_{-n})^2 = O(1)$. 
\qed

\section{The number  variance}


 \subsection{Proof of Theorem~\ref{annuli prop}}
 

For $0\leq \rho_1<\rho_2$ and $z\in S^2$ let 
$$  A_{\rho_1,\rho_2}(z)=\{w\in S^2: \rho_1\leq \dist(z,w)\leq \rho_2 \}.
$$
 (so for $\rho_1=0$ we get a spherical cap).  

The variance over all annuli of radii $\rho_1<\rho_2$ is  
\begin{equation*}
V(n;\rho_1,\rho_2):=\int_{S^2} \Big( \Zps(n; A_{\rho_1,\rho_2}(z))- A 
\Big)^2 d\sigma(z)
\end{equation*}
where $A=\area(A_{\rho_1,\rho_2}(z))$ is the common area of all these annuli.

We want to show, that assuming  the Lindel\"of Hypothesis for standard $GL(2)/\Q$ L-functions, for any sequence $n\to \infty$, with $n\neq 7\bmod 8$   squarefree,  we have  
$$ V(n;\rho_1,\rho_2):=\int_{S^2} \Big( \Zps(n; A_{\rho_1,\rho_2}(z))- A 
\Big)^2 d\sigma(z)  \ll_\epsilon n^\epsilon   N_n   \cdot A ,\qquad \forall \epsilon>0 \;,
$$ 
where $A=\area(A_{\rho_1,\rho_2}(z))$ is the common area of the  annuli.

For $m=0,1,\dots$ and $j=1,2,\dots, 2m+1$, let $\phi_{j,m}$ be an orthonormal basis of eigenfunction of the Laplacian $\Delta$ of the sphere, i.e. of the spherical harmonics of degree $m$.  For such a $\phi_{j,m}$ the Weyl sum is defined by 
\begin{equation}
W_{\phi_{j,m}}(n):= \sum_{\bx\in \vE(n)} \phi_{j,m}(\frac{\bx}{|\bx|} )\;.
\end{equation}
Let $k(z,\zeta)$ be a point pair invariant on $S^2$ \cite{Selberg}. Then (in $L^2$)  
\begin{equation}\label{ps4}
k(z,\zeta) = \sum_{m=0}^\infty h_k(m) \sum_{j=1}^{2m+1} \phi_{j,m}(z) \phi_{j,m}(\zeta)
\end{equation}
with 
$$
h_k(m) = \int_{S^2} k(z,\zeta) \omega_m(\zeta) d  \zeta 
$$
where $\omega_m(\zeta)$ is the zonal spherical harmonic about $z$, normalized to take value $1$ at $\zeta=z$, 
and $d\zeta=4\pi d\sigma(z)$ is the un-normalized area measure on $S^2$. Thus  (see e.g. \cite{LPS1}) 
\begin{equation*}
h_k(m) = 2\pi \int_0^1 k(t)P_m(t)dt
\end{equation*} 
where $P_m(t)$ is the Legendre polynomial.

We have 
$$ \Zps(n;A_{\rho_1,\rho_2}(z)) = \sum_{\bx\in \vE(n)} k(\frac{\bx}{|\bx|})$$
for the point pair invariant 
\begin{equation*}
k(z,\zeta) = \mathbf 1_{A_{\rho_1,\rho_2}(z)} (\zeta)
\end{equation*}
where $\mathbf 1_\Omega$ is the indicator function of the set $\Omega$, 
and therefore we get from \eqref{ps4} that
\begin{equation}\label{ps7}
V(n;\rho_1,\rho_2) = \sum_{m=1}^\infty h_{\rho_1,\rho_2}(m)^2 \sum_{j=1}^{2m+1} 
|W_{\phi_{j,m}}(n)|^2 \;. 
\end{equation}

The key arithmetic ingredient is the explicit formula for the Weyl sums in terms of special values of L-functions. 
The particular version that we use is due to \cite{BSP} and \cite{BSSP}  
as explicated in \cite{Luo} and coupled with \cite{Kohnen-Zagier}. 
We choose the $\phi_{j,m}$ to be an orthonormal basis of Hecke eigenfunctions for the action of the Hamilton quaternions on $S^2$ (see \cite{Luo}). 
Each such $\phi_{j,m}$ has a Jacquet-Langlands lift to a holomorphic Hecke cusp form $f_{j,m}$ for $\Gamma_0(8)$, of weight $2m+2$. Let $L(s,f)$ and $L(s,\sym^2f)$ denote the finite parts of the corresponding L-functions.  Then for $n$ squarefree 
\begin{equation}\label{sp8}
|W_{\phi_{j,m}}(n)|^2  = c \frac{n^{1/2} L(\frac 12, f_{j,m}) L(\frac 12, f_{j,m}\times \chi_{-n})}{L(1,\sym^2 f_{j,m})} \;.
\end{equation}
Here $c>0$ is an absolute constant (independent of $\phi_{j,m}$, $m$ and $n$) 
and $\chi_{-n}$ is the quadratic Dirichlet character  corresponding to the extension $\Q(\sqrt{-n})$. For the indefinite ternary form $y^2-xz$, instead of the definite form $x^2+y^2+z^2$ at hand, the explicit formula \eqref{sp8} is given in \cite[(5.1)]{LMY} and it follows in a similar way from \cite{KS} and \cite{BM}. 

From \eqref{sp8} and\footnote{We also need a good lower bound for $L(1,\sym^2 f_{j,m})$, which unconditionally is due to Hoffstein-Lockhart \cite{HL}}  the Lindel\"of Hypothesis applied to the 
L-functions $L(s, f_{j,m})$ and $ L(s, f_{j,m}\times \chi_{-n})$, \eqref{ps7} becomes
\begin{equation}
\begin{split}
V(n;\rho_1,\rho_2) &\ll_\epsilon \sum_{m=1}^\infty h_{\rho_1,\rho_2}(m)^2 n^{1/2} \sum_{j=1}^{2m+1} m^\epsilon n^\epsilon \\
& = n^{\frac 12+\epsilon} \sum_{m=1}^\infty  h_{\rho_1,\rho_2}(m)^2 m^{1+\epsilon},\qquad \forall \epsilon>0 \;.
\end{split}
\end{equation}
The simple estimate  $h_{\rho_1,\rho_2}(m)\ll m^{-3/2}$ (see \cite[page 169]{LPS1}) yields (for any $X\gg 1 $) 
\begin{equation*}
\begin{split}
V(n;\rho_1,\rho_2)&\ll_\epsilon  X^\epsilon n^{1/2+\epsilon} \sum_{m\leq X} m h_{\rho_1,\rho_2}(m)^2   + n^{1/2+\epsilon} \sum_{m>X} m^{-2+\epsilon}\\
& \ll X^{\epsilon} n^{1/2+\epsilon} \int_{S^2} |\chi_{A_{\rho_1,\rho_2} }(\zeta) |^2 d\sigma(\zeta)  + n^{1/2+\epsilon} X^{-1+\epsilon} \;.
\end{split}
\end{equation*}
Choosing $X=n$ gives  
$$
V(n;\rho_1,\rho_2) \ll_\epsilon n^{1/2+\epsilon'} A  \ll   A   N_n^{1+\epsilon''}
$$
as claimed. 

\subsection{Proof of Corollary~\ref{cor:covering radius}}\label{sec:proof of cor 1.8}
 We show that Conjecture~\ref{Conj pois var} implies Corollary~\ref{cor:covering radius}. 
 \begin{proof}
Assume the covering radius of $\widehat \vE(n)$ is bigger than $\rho$, so that  
 there is some point $\xi_0\in S^2$ so that that the cap $\Ccap(\xi_0, \rho)\subset S^2$ contains no projected lattice point $\frac 1{\sqrt{n}}\vE(n)$. Therefore if $0<\delta \leq \rho/2$, then for all $\xi\in \Ccap(\xi_0,\rho/2)$, the caps $\Ccap(\xi,\delta)$ also do not contain any projected lattice points, that is 
$$ 
Z(n;\Ccap(\xi,\delta))= 0, \quad \forall \xi\in \Ccap(\xi_0,\rho)\;. 
$$
It follows that 
\begin{equation}\label{big var Z} 
\int_{S^2} \Big| Z(n;\Ccap(\xi,\delta))- N_n\area(\Ccap(\xi,\delta))  \Big|^2 d\sigma(\xi) \gg \rho^2 N_n^2 \delta^4 \;.
\end{equation}
 
Combining \eqref{big var Z} and Conjecture~\ref{Conj pois var}  gives
$$
\rho^2\delta^2 \ll N_n^{-1}\;.
$$
Taking $\delta = \rho/2$ we obtain
$$\rho\ll N_n^{-1/4} $$
 as claimed.
\end{proof}

\section{Gaps between sums of two squares}\label{sec:Littlewood}

We denote by $\mathcal S_2=\{n_1<n_2<\dots \}$ the sequence of integers which are sums of two squares. An old conjecture asserts that the gaps between consecutive elements of $\mathcal S_2$ satisfy $n_{i+1}-n_i\ll n_i^\epsilon$, for all $\epsilon>0$. 
Note that  primes   $p=1\bmod 4$ are also conjectured to have this property, and since such primes are in $\mathcal S_2$ this a fortiori implies the above conjecture. 
However, all that is known is the elementary bound $n_{i+1}-n_i\ll n_i^{1/4}$. 
In this section we point out that the covering radius conjecture~\ref{conj:covering} implies the above conjecture on gaps between sums of two squares. 

For $Y\gg 1$, let  $\mathcal S_2(Y)  = \mathcal S_2\cap [Y,2Y)$, and let 
\begin{equation*}
G(Y) = \max\{  n_{i+1}-n_i:  n_i\in \mathcal S_2(Y)\} 
\end{equation*}
be the maximal gap between sums of square in the interval $[Y,2Y)$, 
\begin{equation*}
G(Y) = n''-n'
\end{equation*}
with $n'<n'' $   consecutive elements of $\mathcal S_2(Y)$.
  We want to show that Conjecture~\ref{conj:covering} implies that $G(Y)\ll Y^\epsilon$, for all $\epsilon>0$. 

Assume then that $G(Y)>Y^\epsilon$. 
By Brun's sieve, every interval of length $\geq G(Y)/8 $ contains an integer $m$ which is not divisible by any small prime $p\leq G(Y)^\delta$, for $\delta>0$ sufficiently small. 
Hence we may find an integer $m$ for which
 \begin{equation} \label{eq:brun}
 |m-\frac{n'+n''}4 |<\frac 18 G(Y)
 \end{equation}
 and free of any prime factors less than $G(Y)^\delta$: 
 \begin{equation}\label{sieve}
p\mid m\Rightarrow p>G(Y)^\delta\;.
\end{equation}

Take $n=m^2$  and the point  $\bm:=(0,0,m)\in \vE(n)$ (note $n=1,5\bmod 8$). Then by Conjecture~\ref{conj:covering} there is $\bx=(x_1,x_2,x_3)\in \vE(n)$, $\bx\neq \bm$ so that 
$$
|\bm-\bx|^2 = x_1^2+x_2^2 + (m-x_2)^2< G(Y)^\delta m  \;.
$$
Thus
\begin{equation*}
x_1^2+x_2^2<G(Y)^\delta m 
\end{equation*}
and since $x_1^2+x_2^2+x_3^2=m^2$, we have
\begin{equation*}
x_1^2+x_2^2 = (m-x_3)(m+x_3)\;.
\end{equation*}
We claim that $m+x_3\in \mathcal S_2$. To see this, note that   if $p=3\bmod 4$ divides the sum of  two squares $x_1^2+x_2^2$, then $\ord_p(x_1^2+x_2^2)$ is even.  
It follows that if $p=3\bmod 4$ is a prime such that $p\mid m+x_3$ and $\ord_p(m+x_3)$ is odd, then $p\mid m-x_3$ and hence $p\mid m$. 
Since moreover 
\begin{equation*}
m-x_3=\frac{x_1^2+x_2^2}{m+x_3}<\frac{x_1^2+x_2^2}{m}<G(Y)^\delta
\end{equation*}
we conclude that $p\leq m-x_3<G(Y)^\delta$, which is excluded by \eqref{sieve}. Hence $\ord_p(m+x_3)$ is even for any prime $p=3\bmod 4$, that is $m+x_3\in \mathcal S_2$ is a sum of two squares.  

Since $2m=(m+x_3)+(m-x_3)$, we obtain 
\begin{equation*}
\dist(2m,\mathcal S_2)<G(Y)^\delta\;.
\end{equation*}
 Hence 
 \begin{equation*}
 \frac 12 G(Y) =\dist(\frac{n'+n''}{2},\mathcal S_2) \leq |\frac{n'+n''}{2}-2m| + \dist(2m, \mathcal S_2)
 <\frac 14 G(Y) + G(Y)^\delta
 \end{equation*}
 by \eqref{eq:brun}. This is a contradiction for $Y\gg 1$.

\section*{\bf Acknowledgments}
We thank G. Harcos, M. Raziwi\l\l  \; and A. Venkatesh for their insightful comments.

J.B. was partially supported by NSF grant  DMS-1301619. 
Z.R.   was supported by the Friends of the Institute for Advanced Study, 
and by the European Research Council under the European Union's Seventh
Framework Programme (FP7/2007-2013)/ERC grant agreement
n$^{\text{o}}$ 320755.
P.S.  is partially supported by NSF grant DMS-1302952.


\end{document}